\documentclass[reqno,12pt]{amsart}
\usepackage{amssymb}
\usepackage{amsmath}
\usepackage{mathrsfs}
\usepackage{amsmath,amssymb}
\usepackage{paralist}
\usepackage{graphics} 
\usepackage{epsfig} 
\usepackage[colorlinks = true]{hyperref}
\hypersetup{urlcolor = red, citecolor = blue}
\usepackage[top = 1in,bottom = 1in,left = 1in,right = 1in]{geometry}
\usepackage{cite}
\bibliographystyle{reference-2023-zamp-winkler-imitation}
\vfuzz2pt 
\hfuzz2pt 
\newtheorem{thm}{Theorem}[section]

\newtheorem{lem}[thm]{Lemma}
\newtheorem{prop}[thm]{Proposition}
\theoremstyle{definition}

\newtheorem{rem}{Remark}[section]
\numberwithin{equation}{section}

\begin{document}

\title[ repulsion-consumption Keller-Segel system]
{Boundedness and finite-time blow-up in a repulsion-consumption system with nonlinear chemotactic sensitivity}

\author[Zeng]{Ziyue Zeng}%
\address{School of Mathematics, Southeast University, Nanjing 211189, P. R. China}
\email{ziyzzy@163.com}

\author[Li]{Li Yuxiang$^{\star}$}

\thanks{$^{\star}$Corresponding author}
\address{School of Mathematics, Southeast University, Nanjing 211189,  P. R. China}
\email{lieyx@seu.edu.cn}

\thanks{Supported in part by National Natural Science Foundation of China (No. 12271092, No. 11671079) and the Jiangsu Provincial Scientific Research Center of Applied Mathematics (No. BK20233002).}

\subjclass[2020]{35K55, 35K51, 35B44, 92C17.}%

\keywords{Repulsion-consumption system, global boundedness, finite time blow up.}

\maketitle

\begin{abstract}
This paper investigates the repulsion-consumption system
\begin{align}\tag{$\star$}
  \left\{
  \begin{array}{ll}
    u_t=\Delta u+\nabla \cdot(S(u) \nabla v),  \\
    \tau v_t=\Delta v-u v, 
\end{array}
  \right.
\end{align}
under no-flux/Dirichlet conditions for $u$ and $v$ in a ball $B_R(0) \subset \mathbb R^n $. When $\tau=\{0,1\}$ and $0<S(u)\leqslant K(1+u)^{\beta}$ for $u \geqslant 0$ with some $\beta \in (0,\frac{n+2}{2n})$ and $K>0$, we show that for any given radially symmetric initial data, the problem ($\star$) possesses a global bounded classical solution. Conversely, when $\tau=0$, $n=2$ and $S(u) \geqslant k u^{\beta}$ for $u \geqslant 0$ with some $\beta>1$ and $k>0$, for any given initial data $u_0$, there exists a constant $M^{\star}=M^{\star}\left(u_0\right)>0$ with the property that whenever the boundary signal level $M\geqslant M^{\star}$, the corresponding radially symmetric solution blows up in finite time. 

Our results can be compared with that of the papers [J.~Ahn and M.~Winkler, {\it Calc. Var.} {\bf 64} (2023).] and [Y. Wang and M. Winkler, {\it Proc. Roy. Soc. Edinburgh Sect. A}, \textbf{153} (2023).], in which the authors studied the system ($\star$) with the first equation replaced respectively by $u_t=\nabla \cdot ((1+u)^{-\alpha} \nabla u)+\nabla \cdot(u \nabla v)$ and $u_t=\nabla \cdot ((1+u)^{-\alpha} \nabla u)+\nabla \cdot(\frac{u}{v} \nabla v)$. Among other things, they obtained that, under some conditions on $u_0(x)$ and the boundary signal level, there exists a classical solution blowing up in finite time whenever $\alpha>0$.
\end{abstract}

\section{Introduction}
\vskip 3mm

In this paper, we consider the following repulsion-consumption system
\begin{align}\label{1.0.0}
  \left\{
  \begin{array}{ll}
    u_t=\Delta u+\nabla \cdot\left(S(u) \nabla v\right), & x \in \Omega, t>0, \\
    \tau v_t=\Delta v-u v, & x \in \Omega, t>0, \\
    \left( \nabla u+{S(u)} \nabla v\right) \cdot \nu=0, \quad v=M, & x \in \partial \Omega, t>0, \\
    u(x, 0)=u_0(x), \quad \tau v(x, 0)=\tau v_0(x), & x \in \Omega,
  \end{array}
  \right.
\end{align}
where $\Omega=B_R(0) \subset \mathbb {R}^n$ is a ball, $M > 0$ is a given parameter and $\tau=\{0,1\}$. The scalar functions $u$ and $v$ denote the cell density and the chemical concentration consumed by cells, respectively. $S(u)$ represents the chemotactic sensitivity of cells, which generalizes the prototypical choice in $S(u)=u^{\beta}$ for $u \geqslant 0 $ with some $\beta>0$. The main purpose of this paper is to determine the explosion-critical parameter associated with the chemotactic sensitivity function $S(u)$ in system (\ref{1.0.0}).

The system (\ref{1.0.0}) originates from the chemotaxis-consumption system, which describes the intricate patterns formed by the colonies of Bacillus subtilis as they seek oxygen  \cite{1990-PA-MATSUSHITAFUJIKAWA , 2005-PTNASTUSA-TuvalCisnerosDombrowskiWolgemuthKesslerGoldstein , 1971-JOTB-KELLERSEGEL},
\begin{align}\label{1.0.9}
  \left\{
  \begin{array}{ll}
    u_t=\nabla \cdot(D(u) \nabla u)-\nabla \cdot(uS(u,v) \nabla v), \\
    v_t=\Delta v-u v,  
 \end{array}
  \right.
\end{align} 
 where $D(u)$ and $S(u,v)$ denote the diffusivity and the chemotactic sensitivity of cells, respectively. The system (\ref{1.0.9}) with $D(u)= 1$ and $S(u,v)= \chi $, subjected to homogeneous Neumann boundary conditions, has been investigated extensively. If $n\geqslant 2$, Tao \cite{ 2011-JMAA-Tao } obtained a global bounded classical solution for system (\ref{1.0.9}) under the condition that $\left\|v_0\right\|_{L^{\infty}(\Omega)}$ is sufficiently small. Zhang and Li \cite{2015-JMP-ZhangLi} demonstrated that if either $n \leqslant 2$ or $n \geqslant 3$ and $0<\chi \leqslant \frac{1}{6(n+1)\|v(x, 0)\|_{L^{\infty}(\Omega)}}$, the global classical solution $(u,v)$ converges to $(\frac{1}{|\Omega|} \int_{\Omega} u_0,0)$ exponentially as $t \rightarrow \infty$. For arbitrary large initial data, Tao and Winkler \cite{2012-JDE-TaoWinklera} proved that when $n=2$, the global classical solution of (\ref{1.0.9}) is bounded, satisfying $(u,v) \rightarrow (\frac{1}{|\Omega|} \int_{\Omega} u_0,0)$  as $t \rightarrow \infty$; when $n=3$, the problem admits global weak solutions, which eventually become bounded and smooth. Moreover, such solutions also approach the spatially constant equilibria $(\frac{1}{|\Omega|} \int_{\Omega} u_0,0)$ in the large time limit. When $n \geqslant 4$, Wang and Li \cite{2019-EJDE-WangLi} showed that this model possesses at least one global renormalized solution. For more details about the modeling of chemotaxis-consumption models, we refer to the survey \cite{2023-SAM-LankeitWinkler}.
 
When $D(u)$ extends the prototypical choice in $D(u)= c_D u^{m-1}$ for $u>0$, there are also some results for system (\ref{1.0.9}) subjected to homogeneous Neumann boundary conditions. Among the results obtained by Wang et al. \cite{2014-ZAMP-WangMuZhou}, if $m>2-\frac{2}{n}$ with $n \geq 2$, the system (\ref{1.0.9}) with a sufficiently smooth $S(v)$ possesses a unique global bounded classical solution in a convex smooth bounded domain. In the case that $S(u,v) = 1$ and $n \geqslant 3$, some authors conducted further researches. Relaxing the assumption that $m>2-\frac{2}{n}$ in \cite{2014-ZAMP-WangMuZhou} to $m>2-\frac{6}{n+4}$, in a convex domain, Wang et al. \cite{2015-ZAMP-WangMuLinZhao} obtained global classical solutions for system (\ref{1.0.9}) in the case of non-degenerate diffusion, and global weak solutions for system (\ref{1.0.9}) in the degenerate case. Subsequently, for the degenerate diffusion, Wang and Xiang \cite{2015-ZAMP-WangXiang} removed the convexity assumption on the domain in \cite{2014-ZAMP-WangMuZhou, 2015-ZAMP-WangMuLinZhao} and established global bounded weak solutions
under the condition that $m>2-\frac{n+2}{2 n}$. In general domain, the range of $m$ was relaxed from  $m>2-\frac{2}{n}$ in \cite{2014-ZAMP-WangMuZhou} to $m>\frac{3}{2}-\frac{1}{n}$ by Fan and Jin\cite{2017-JMP-FanJin}, who obtained global bounded classical solutions for system (\ref{1.0.9}) with non-degenerate diffusion, and global weak solutions for system (\ref{1.0.9}) with degenerate diffusion. Moreover, the obtained solutions converge to $(\frac{1}{|\Omega|} \int_{\Omega} u_0,0)$ as $t \rightarrow \infty$.

When $n \geqslant 2$, $D(u) \geqslant c_D(1+u)^{m-1}$ and $S(u) \leqslant c_S(1+u)^{q-1}$, where $m \in \mathbb R$ and $q \in \mathbb R$, Wang et al. \cite{2016-ZAMP-WangMuHu} demonstrated that if $q<m+\frac{n+2}{2 n}$, the classical solution of system (\ref{1.0.9}) with homogeneous Neumann boundary conditions is globally bounded; if $q<\frac{m}{2}+\frac{n+2}{2 n}$, the solutions of system (\ref{1.0.9}) exist globally. There are also some results about the global existence of solutions for system (\ref{1.0.9}) with logistic source under homogeneous Neumann boundary conditions  \cite{2013-EJDE-WangKhanKhan, 2015-BVP-Li,2017-DCDS-LankeitWang,2018-AA-WangMuHuZheng}.

For the radially symmetric no-flux/Dirichlet problem of system (\ref{1.0.9}) with $D(u)=S(u,v)=1$, Lankeit and Winkler \cite{2022-N-LankeitWinkler} found that there are globally bounded classical solutions when $n=2$. For $n\in \left\{3,4,5\right\}$, they further constructed global weak solutions. Yang and Ahn \cite{2024-NARWA-YangAhn} considered the parabolic-elliptic system (\ref{1.0.2}) with $D(u)=1$ and $S(u,v)=S(v)$, where $S(v)$ may allow singularities at $v= 0$, and established the global existence and boundedness of radial large data solutions when $n \geqslant 2$. 

Recently, the following repulsion-consumption system
\begin{align}\label{1.0.15}
  \left\{
  \begin{array}{ll}
     u_t=\nabla \cdot(D(u) \nabla u)+\nabla \cdot\left(uS(u,v) \nabla v\right),  \\
     0=\Delta v-u v,  
  \end{array}
  \right.
\end{align}
along with the no-flux/Dirichlet boundary conditions, has been studied by some authors, where $\Omega$ is a ball in $\mathbb R^{n}$ and $D(u)$ extends the prototypical choice in $D(u)=(1+u)^{-\alpha}$. For system (\ref{1.0.15}) with $S(u,v)=\frac{1}{v}$ and $n\geqslant 2$, Wang and Winkler \cite{2023-PRSESA-WangWinkler} found that, for initial data in a significantly large set of radial functions on $\Omega$, the corresponding problem possesses a finite-time blow-up solution when $\alpha>0$. For $n=2$ and $S(u,v)=1$, the explosion critical parameter for system (\ref{1.0.15}) was discovered by Ahn and Winkler \cite{2023-CVPDE-AhnWinkler}. They proved that when $\alpha>0$, for each initial data $u_0$, it is possible to identify $M_{\star}(u_0)>0$ such that there exists a classical solution blowing up in finite time whenever the boundary signal level $M>M_{\star}(u_0)$; conversely, when $\alpha \leqslant 0$, for each initial data and $M>0$, there exists a global bounded classical solution. For $n=2$ and $S$ being a rotation matrix with some $\theta \in (0,2\pi]$, Dong et al. \cite{2024-AML-DongZhangZhang} proved that the corresponding system (\ref{1.0.15}) with $\alpha>0$ admits a finite-time blow-up solution.

When the chemotactic sensitivity is linear, the results from \cite{ 2023-CVPDE-AhnWinkler, 2023-PRSESA-WangWinkler} indicate that an inhibitory effect of the diffusion is necessary in system (\ref{1.0.15}) for the occurrence of blow-up phenomenon. Inspired by this, we consider system (\ref{1.0.0}) with linear diffusion to determine an explosion critical parameter related to the nonlinear chemotactic sensitivity function.

\textbf{Main results}.
Suppose that the chemotactic sensitivity $S$ and initial data $u_0$ respectively satisfy 
\begin{align}\label{1.0.3}
  S\in C^2([0, \infty)) \text { is such that } S(\xi)\geqslant0 \text { for } \xi \geqslant 0 \text {, }
\end{align}
and 
\begin{align}\label{1.0.2}
  u_0 \in W^{1, \infty}(\Omega) \text { is radially symmetric and nonnegative with } u_0 \not \equiv 0.
\end{align}
For the parabolic-parabolic problem, we also assume that, 
\begin{align}\label{v 0}
 \tau=1 \ and \ v_0 \in W^{1, \infty}(\Omega) \text { is positive in } \bar{\Omega} \text { and radially symmetric with } v_0=M \text { on } \partial \Omega .
\end{align}

We give two preliminary propositions addressing the local existence, uniqueness and extensibility of the classical solution to (\ref{1.0.0}) with $\tau=0$ or $\tau=1$, which can be proved through a direct adaptation of the existence theory from \cite{2019-JDE-TaoWinkler}.
\begin{prop}\label{0.0}
Let $n \geqslant 1$, $R>0$ and $\Omega=B_R(0) \subset \mathbb{R}^n$. Assume that $S(\xi)$ and $u_0$ satisfy $(\ref{1.0.3})$ and $(\ref{1.0.2})$ respectively. Then there exist $T_{\max} \in(0,+\infty]$ and a unique radially symmetric pair $(u, v)$ which solves $(\ref{1.0.0})$ with $\tau=0$ in the classical sense in $\Omega \times\left(0, T_{\max }\right)$, satisfying
\begin{align*}
 \left\{
  \begin{array}{ll}
    u \in \bigcup_{q>n} C^0\left(\left[0, T_{\max }\right) ; W^{1, q}(\Omega)\right) \cap C^{2,1}\left(\bar{\Omega} \times\left(0, T_{\max }\right)\right)   \quad and \\
    v \in C^{2,0}\left(\bar{\Omega} \times\left(0, T_{\max }\right)\right).
 \end{array}
  \right.
\end{align*}
In addition, $u>0$, $v>0$ in $\bar{\Omega} \times\left(0, T_{\max }\right)$ and 
\begin{align*}
  \text { if } \, T_{\max }<\infty, \ \text{then}
  \ \limsup _{t \rightarrow T_{\max }}\|u(\cdot, t)\|_{L^{\infty}\left(\Omega\right)}=\infty.
\end{align*}
Moreover we have
\begin{align}\label{mass-0}
  \int_{\Omega} u(x, t) \mathrm{d} x=\int_{\Omega} u_0(x) \mathrm{d} x , 
  \quad  t \in\left(0, T_{\max }\right) 
\end{align}
and
\begin{align}\label{1.0.7}
  v(x, t) \leqslant M,  
  \quad  (x,t) \in \Omega \times \left(0, T_{\max }\right).
\end{align}
\end{prop}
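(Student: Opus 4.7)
The plan is to follow the standard contraction-mapping strategy used in \cite{2019-JDE-TaoWinkler}, adapted to our mixed no-flux/Dirichlet boundary setting. I fix $q>n$ and, for $T\in(0,1]$ to be chosen, consider the closed bounded set
\begin{align*}
X_T := \bigl\{\tilde u \in C^0([0,T];W^{1,q}(\Omega)) : \tilde u \text{ radial}, \ \tilde u \geqslant 0, \ \|\tilde u\|_{C^0([0,T];W^{1,q})} \leqslant 2\|u_0\|_{W^{1,q}}\bigr\}.
\end{align*}
Given $\tilde u \in X_T$, the elliptic problem $-\Delta v + \tilde u v = 0$ in $\Omega$ with $v=M$ on $\partial\Omega$ has a unique radial classical solution $v = \Phi(\tilde u)$ for each $t$, and by the weak maximum principle (since $\Delta v = \tilde u v \geqslant 0$) one obtains $0<v\leqslant M$ together with the $W^{2,q}$-estimate $\|v\|_{W^{2,q}} \leqslant C(\|\tilde u\|_{L^q},M)$. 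With $v$ frozen, I then define $\mathcal F(\tilde u)$ as the unique classical solution $u$ of the linear parabolic problem $u_t=\Delta u+\nabla\cdot(S(\tilde u)\nabla v)$ with $(\nabla u+S(\tilde u)\nabla v)\cdot\nu=0$ on $\partial\Omega$ and $u(\cdot,0)=u_0$. Since $S\in C^2$ and the inhomogeneity $\nabla\cdot(S(\tilde u)\nabla v)$ lies in $L^\infty((0,T);L^{q/2})$, maximal Sobolev parabolic regularity (or the semigroup framework of \cite{2019-JDE-TaoWinkler}) yields $\|\mathcal F(\tilde u)\|_{X_T}\leqslant \|u_0\|_{W^{1,q}}+CT^{\sigma}(1+\|\tilde u\|_{X_T}^p)$ for some $\sigma>0$ and $p\geqslant 1$. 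For $T$ sufficiently small $\mathcal F$ is a self-map of $X_T$ and, via the pointwise Lipschitz bound on $S$ on the bounded range determined by $X_T$, a strict contraction; the Banach fixed point theorem produces a radial pair $(u,v)$ solving (\ref{1.0.0}) with $\tau=0$. Schauder-type bootstrapping then upgrades regularity to $u\in C^{2,1}$ and $v\in C^{2,0}$.

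Positivity of $u$ in $\bar\Omega\times(0,T_{\max})$ follows from the strong maximum principle applied to the $u$-equation rewritten as $u_t - \Delta u - S'(u)\nabla u\cdot\nabla v - S(u)\Delta v = 0$, using $u_0\geqslant 0$ with $u_0\not\equiv 0$; positivity of $v$ is a direct consequence of the elliptic strong maximum principle together with the boundary value $M>0$. The extensibility criterion is obtained in the customary way: if $T_{\max}<\infty$ and $\|u(\cdot,t)\|_{L^\infty}$ were to stay bounded, one could re-launch the fixed-point argument at a time close to $T_{\max}$ with a uniform time-step, contradicting maximality. Integrating the first equation of (\ref{1.0.0}) over $\Omega$ and invoking the no-flux condition kills the divergence term, which yields the mass identity (\ref{mass-0}). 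The bound (\ref{1.0.7}) again follows from $\Delta v = uv\geqslant 0$ in $\Omega$ and $v=M$ on $\partial\Omega$ by the weak maximum principle.

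The main technical obstacle is verifying the contraction property in a way that simultaneously handles the nonlinearity $S(u)$ and the coupling through the elliptic problem: one must combine the Lipschitz estimate $\|S(\tilde u_1)-S(\tilde u_2)\|_{L^q}\leqslant C\|\tilde u_1-\tilde u_2\|_{L^q}$ with a corresponding bound $\|\Phi(\tilde u_1)-\Phi(\tilde u_2)\|_{W^{2,q}}\leqslant C\|\tilde u_1-\tilde u_2\|_{L^q}$ (coming from linear elliptic theory applied to the difference equation $-\Delta(v_1-v_2)+\tilde u_1(v_1-v_2)=-(\tilde u_1-\tilde u_2)v_2$ with $v_i\leqslant M$), and then absorb the resulting factor into a power $T^\sigma$ produced by parabolic smoothing. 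Because all estimates parallel those of \cite{2019-JDE-TaoWinkler}, with only cosmetic changes due to the Dirichlet condition on $v$, I would refer the reader to that source for the detailed constants and instead highlight in the present paper only the two new ingredients, namely the $L^\infty$ bound on $v$ and the preservation of radial symmetry under $\mathcal F$.
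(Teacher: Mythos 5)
Your proposal is correct and follows exactly the route the paper intends: the paper offers no proof of its own beyond deferring to "a direct adaptation of the existence theory from \cite{2019-JDE-TaoWinkler}", and your contraction-mapping scheme in $C^0([0,T];W^{1,q}(\Omega))$ with the frozen elliptic solve for $v$, the maximum-principle bounds $0<v\leqslant M$, the mass identity from the no-flux condition, and the restart argument for the extensibility criterion is precisely that standard adaptation. The only cosmetic remark is that for the strong maximum principle giving $u>0$ one should write the zeroth-order term as $S(u)\Delta v=\bigl(S(u)v\bigr)\,u$ (using $\Delta v=uv$) so the coefficient is bounded, but this does not affect the validity of your argument.
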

\begin{prop}\label{0.0-1}
Let $n \geqslant 1$, $R>0$ and $\Omega=B_R(0) \subset \mathbb{R}^n$. Assume that $S(\xi)$ satisfies $(\ref{1.0.3})$. Suppose that $u_0$ and $v_0$ satisfy $(\ref{1.0.2})$ and $(\ref{v 0})$ respectively. Then there exist $T_{\max} \in(0,+\infty]$ and a unique radially symmetric pair $(u, v)$ which solves $(\ref{1.0.0})$ with $\tau=1$ in the classical sense in $\Omega \times\left(0, T_{\max }\right)$, satisfying
\begin{align*}
   \left\{\begin{array}{l}
u \in C^0(\bar{\Omega} \times[0, T_{\max})) \cap C^{2,1}(\bar{\Omega} \times(0, T_{\max})) \quad \text { and } \\
v \in \bigcap_{q>n} C^0\left([0, T_{\max}) ; W^{1, q}(\Omega)\right) \cap C^{2,1}(\bar{\Omega} \times(0, T_{\max})).
\end{array}\right.
\end{align*}
In addition, $u>0$, $v>0$ in $\bar{\Omega} \times\left(0, T_{\max }\right)$ and 
\begin{align*}
  \text { if } \, T_{\max }<\infty, \ \text{then}
  \ \limsup _{t \rightarrow T_{\max }}\|u(\cdot, t)\|_{L^{\infty}\left(\Omega\right)}=\infty.
\end{align*}
Moreover we have
\begin{align}\label{mass-1}
  \int_{\Omega} u(x, t) \mathrm{d} x=\int_{\Omega} u_0(x) \mathrm{d} x , 
  \quad  t \in\left(0, T_{\max }\right) 
\end{align}
and
\begin{align}\label{vmax}
  \|v(\cdot, t)\|_{L^{\infty}(\Omega)} \leqslant \|v_0\|_{L^{\infty}(\Omega)},
   \quad t \in (0,T_{\max}).
\end{align}
\end{prop}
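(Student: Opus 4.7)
The plan is to carry out a direct adaptation of the local existence theory of \cite{2019-JDE-TaoWinkler} to the parabolic-parabolic setting with non-homogeneous Dirichlet data on $v$. The argument is organized around a contraction mapping followed by soft derivations of the quantitative properties.

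First, I would set up the fixed-point scheme on a small time interval. Fixing $q>n$ and a radius $R_0$ controlling the $W^{1,q}$-norms of $u_0$ and $v_0$, for $T>0$ small consider the closed set
\[
X_T=\bigl\{(\bar u,\bar v)\in C^0([0,T];W^{1,q}(\Omega))^2 : \bar u,\bar v \text{ radial},\ \|\bar u\|,\|\bar v-M\|\leqslant 2R_0,\ \bar v\geqslant \tfrac12\min_{\bar\Omega}v_0 \bigr\}
\]
and define $\Phi(\bar u,\bar v)=(u,v)$, where $u$ solves the linear Neumann problem $u_t=\Delta u+\nabla\!\cdot\!(S(\bar u)\nabla \bar v)$ with initial datum $u_0$, and $v$ solves the linear problem $v_t=\Delta v-\bar u v$ with $v=M$ on $\partial\Omega$ and initial datum $v_0$. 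Using the standard $L^p$–$L^q$ smoothing estimates for the Neumann heat semigroup on $u$ and the Dirichlet heat semigroup on $\tilde v:=v-M$ (which satisfies $\tilde v_t=\Delta\tilde v-\bar u\tilde v-\bar uM$ with homogeneous Dirichlet data), one checks that $\Phi$ maps $X_T$ into itself and is a contraction for $T$ small. Radial symmetry is preserved because every operator involved commutes with the $O(n)$-action.

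Second, classical parabolic Schauder and $W^{2,p}$ theory bootstrap the fixed point to a solution with the claimed regularity, and a continuation argument establishes the extensibility criterion in the standard form: once $\|u(\cdot,t)\|_{L^\infty(\Omega)}$ is controlled, parabolic estimates applied to the $v$-equation yield control of $\|v\|_{W^{1,\infty}}$, which allows reopening the contraction at time $T_{\max}$ and hence gives a contradiction if $T_{\max}<\infty$ and $u$ stays bounded. The mass identity (\ref{mass-1}) follows by integrating the first equation and invoking the no-flux condition $(\nabla u+S(u)\nabla v)\cdot\nu=0$.

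For (\ref{vmax}), set $w:=v-\|v_0\|_{L^\infty(\Omega)}$; since $v_0=M$ on $\partial\Omega$ we have $\|v_0\|_{L^\infty(\Omega)}\geqslant M$, so $w\leqslant 0$ on the parabolic boundary, while $w$ satisfies $w_t-\Delta w+uw=-u\|v_0\|_{L^\infty(\Omega)}\leqslant 0$, and the parabolic maximum principle yields $w\leqslant 0$. Strict positivity of $v$ follows from the strong maximum principle applied to the $v$-equation (whose zeroth-order coefficient $-u$ is nonpositive), combined with Hopf's lemma at the boundary where $v=M>0$; strict positivity of $u$ then comes from the strong maximum principle applied to the $u$-equation, rewritten with the drift $S(u)\nabla v$ treated as a known vector field.

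The one genuinely new point relative to \cite{2019-JDE-TaoWinkler} is the handling of the inhomogeneous Dirichlet condition on $v$: homogenizing by $\tilde v=v-M$ introduces the forcing term $-\bar uM$ and forces one to work with Dirichlet rather than Neumann semigroup estimates on the second equation, but this is merely a bookkeeping adjustment. Beyond this, every step is a routine transcription of the existing parabolic-elliptic theory.
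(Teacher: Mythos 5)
Your proposal follows essentially the same route the paper itself invokes: the authors give no proof beyond citing the local existence theory of \cite{2019-JDE-TaoWinkler}, and your contraction-mapping scheme in $C^0([0,T];W^{1,q}(\Omega))$, the bootstrap to classical regularity, the extensibility criterion, the integration of the first equation for (\ref{mass-1}), and the comparison of $v$ with the constant $\|v_0\|_{L^\infty(\Omega)}$ (using $\|v_0\|_{L^\infty(\Omega)}\geqslant M$ so that the constant dominates $v$ on the whole parabolic boundary) are exactly the standard adaptation that citation is meant to cover. The handling of the inhomogeneous Dirichlet datum by passing to $\tilde v=v-M$ is the right bookkeeping step.

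The one place where your argument, as written, does not go through is the strict positivity of $u$. You propose to apply the strong maximum principle "with the drift $S(u)\nabla v$ treated as a known vector field," but if you freeze $\vec a(x,t):=S(u)\nabla v$ the equation becomes $u_t=\Delta u+\nabla\cdot\vec a$, a heat equation with a divergence-form source, for which no sign of $u$ is preserved. Expanding instead gives $u_t=\Delta u+S'(u)\nabla v\cdot\nabla u+S(u)\Delta v$, and the last term is of the admissible form $c(x,t)\,u$ only if $S(0)=0$: one then writes $S(u)=u\int_0^1 S'(\theta u)\,\mathrm{d}\theta$ and applies the maximum principle with bounded coefficients. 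Under the paper's stated hypothesis (\ref{1.0.3}) alone, $S(0)>0$ is allowed and the zeroth-order remainder $S(0)\Delta v$ has no sign, so positivity of $u$ is not a consequence of the argument you describe (this gap is inherited from the paper's own unproved assertion, since the cited reference treats sensitivities carrying an explicit factor of $u$). You should either add the hypothesis $S(0)=0$ for this step or indicate the decomposition above explicitly; the rest of the proposal is sound.
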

Now we state our main results. 
\begin{thm}\label{0.3}
Let $\Omega=B_R(0) \subset \mathbb{R}^n$ $(n \geqslant 2)$ with $R>0$ and $\tau=\{0,1\}$. Assume that $S(\xi)$ satisfies $(\ref{1.0.3})$ and
\begin{align}\label{1.0.5-1}
  0 \leqslant S(\xi) \leqslant  K(1+\xi)^{\beta}, 
  \quad \xi \geqslant 0
\end{align}
with some $\beta \in (0, \frac{n+2}{2n})$ and $K>0$. Suppose $(\ref{1.0.2})$ and $(\ref{v 0})$ are valid. Then the corresponding classical solution $(u,v)$ of $(\ref{1.0.0})$ is globally bounded, i.e.,
\begin{align*}
  \|u(\cdot, t)\|_{L^{\infty}\left(\Omega \right)} \leqslant C, 
  \quad  t>0
\end{align*}
with some $C>0$ depending on $K$, $R$ and $\beta$.
\end{thm}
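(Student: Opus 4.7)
The plan is to establish a uniform-in-time $L^\infty$ bound for $u$ on $(0,T_{\max})$, so that the extensibility criteria in Propositions \ref{0.0} and \ref{0.0-1} force $T_{\max}=\infty$. The route is an $L^p$-testing argument combined with Gagliardo--Nirenberg interpolation and elliptic regularity for the $v$-equation, followed by Moser iteration to reach $L^\infty$.

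Testing the $u$-equation against $(1+u)^{p-1}$, the no-flux condition $(\nabla u+S(u)\nabla v)\cdot\nu=0$ annihilates the joint boundary term arising from $\Delta u$ and $\nabla\cdot(S(u)\nabla v)$, and integration by parts yields
\begin{align*}
\frac{1}{p}\frac{d}{dt}\int_\Omega(1+u)^p+(p-1)\int_\Omega(1+u)^{p-2}|\nabla u|^2=-(p-1)\int_\Omega(1+u)^{p-2}S(u)\nabla u\cdot\nabla v.
\end{align*}
Setting $\Psi(u)=\int_0^u(1+s)^{p-2}S(s)\,\mathrm{d}s$, so that $\Psi(u)\leqslant C_p(1+u)^{p-1+\beta}$ by (\ref{1.0.5-1}), a second integration by parts rewrites the cross term as $(p-1)\int_\Omega\Psi(u)\Delta v-(p-1)\int_{\partial\Omega}\Psi(u)\partial_\nu v$. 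Using $\Delta v=\tau v_t+uv$ together with the a priori bounds $v\leqslant M$ from Propositions \ref{0.0}--\ref{0.0-1} (under the natural hypothesis $v_0\leqslant M$ in the parabolic-parabolic case) and the sign information $\partial_\nu v\geqslant 0$ on $\partial\Omega$ (since $v$ attains its spatial maximum on $\partial\Omega$), the boundary contribution is nonpositive and $\int\Psi(u)uv\leqslant CM\int(1+u)^{p+\beta}$; the $\tau v_t$-term for $\tau=1$ is absorbed through a coupled functional of the form $\int(1+u)^p+\mu\int|\nabla v|^{2q}$ controlled via maximal parabolic regularity for the $v$-equation.

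The heart of the argument is closing the resulting inequality into an autonomous ODI $Y'(t)\leqslant -c_1Y(t)^\kappa+c_2$ with $\kappa>1$ for $Y(t)=\int(1+u)^p$. A plain Gagliardo--Nirenberg interpolation of $\int(1+u)^{p+\beta}$ against $\|\nabla(1+u)^{p/2}\|_{L^2}^2$ and the mass-conserved $\|u\|_{L^1}$ from (\ref{mass-0})--(\ref{mass-1}) only reaches $\beta<2/n$; to push up to the sharp $\beta<\frac{n+2}{2n}$, I would additionally invoke the elliptic regularity $\|v\|_{W^{2,p}(\Omega)}\leqslant C(1+\|u\|_{L^p(\Omega)})$ coming from $\Delta v=\tau v_t+uv$ with Dirichlet data $v|_{\partial\Omega}=M$, combined with the Sobolev embedding $W^{2,p}\hookrightarrow W^{1,np/(n-p)}$ for $p<n$, which yields $\|\nabla v\|_{L^{p/(1-\beta)}}\leqslant C(1+\|u\|_{L^p})$ as soon as $p\geqslant n\beta$; feeding this into the cross term in its pre-IBP form via a Young--H\"older splitting then produces the desired ODI. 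Once $\sup_{t}\|u(\cdot,t)\|_{L^p}<\infty$ is available for some $p>\tfrac{n}{2}(\beta+1)$, the $W^{2,p}$-estimate upgrades $\|\nabla v\|_{L^\infty(\Omega)}$ to be bounded, and a Moser--Alikakos iteration or a Winkler-type semigroup argument promotes the bound to $\sup_t\|u(\cdot,t)\|_{L^\infty}<\infty$, forcing $T_{\max}=\infty$ and concluding the theorem. The main obstacle is the final calibration of Gagliardo--Nirenberg exponents against elliptic regularity indices to realize the sharp threshold $\frac{n+2}{2n}$, further complicated in the parabolic-parabolic case by the presence of $\tau v_t$ in $\Delta v$.
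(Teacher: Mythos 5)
Your overall architecture ($L^p$ testing of the $u$-equation, interpolation against the dissipation $\|\nabla(1+u)^{p/2}\|_{L^2}^2$ and the conserved mass, then Moser iteration) matches the paper's, but the step you yourself flag as the ``main obstacle'' --- calibrating exponents to reach $\beta<\tfrac{n+2}{2n}$ --- is a genuine gap, and the tools you propose cannot close it for $n\geqslant 3$. Both of your devices for the cross term are worth exactly one power of $u$ per two derivatives of $v$: the second integration by parts converts $\nabla v$ into $\Delta v=\tau v_t+uv$, leaving $\int_\Omega(1+u)^{p+\beta}$, while the elliptic estimate $\|\nabla v\|_{L^{np_0/(n-p_0)}}\lesssim 1+\|u\|_{L^{p_0}}$ fed into $\int_\Omega(1+u)^{p-2+2\beta}|\nabla v|^2$ gives, after H\"older and Gagliardo--Nirenberg, a total power of $\|\nabla(1+u)^{p/2}\|_{L^2}$ equal to $\bigl(p-1+2\beta-\tfrac2n\bigr)/\bigl(\tfrac p2-\tfrac12+\tfrac1n\bigr)$, which is $<2$ precisely when $\beta<\tfrac2n$; the same computation for $\int_\Omega(1+u)^{p+\beta}$ again yields $\beta<\tfrac2n$. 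Since $\tfrac2n<\tfrac{n+2}{2n}$ for $n\geqslant3$ (they coincide only at $n=2$), your scheme stops strictly short of the stated threshold. The extra $\tfrac12$ in the exponent comes from an ingredient absent from your proposal: one couples $\int_\Omega(1+u)^p$ with $\int_\Omega|\nabla v|^{2m}$, whose evolution produces the dissipation $\int_\Omega|\nabla v|^{2(m-1)}|D^2v|^2$, and exploits $\|v\|_{L^\infty}\leqslant M$ through the functional inequality $\|\nabla v\|_{L^p}\leqslant C\bigl\||\nabla v|^{q-1}|D^2v|\bigr\|_{L^2}^{(p-2)/(pq)}+C$ of Lemma~\ref{p-q}; the sublinear exponent $\tfrac{p-2}{pq}$ is what buys the improvement from $\tfrac2n$ to $\tfrac{n+2}{2n}$.

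Second, the Dirichlet condition for $v$ is handled too lightly. Your only boundary device is the sign $\partial_\nu v\geqslant0$, which for $\tau=1$ requires $v_0\leqslant M$ in $\Omega$ --- a hypothesis not contained in (\ref{v 0}) --- and your treatment of the $\tau v_t$ term (``absorbed through a coupled functional \dots via maximal parabolic regularity'') is precisely where the hard boundary integrals $\int_{\partial\Omega}|\nabla v|^{2(m-1)}\partial_\nu|\nabla v|^2$ and $\int_{\partial\Omega}|\nabla v|^{2(m-1)}uv\,|\partial_\nu v|$ appear: under Dirichlet data they do not vanish and cannot be discarded by convexity as in the Neumann case. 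The paper controls them by first establishing a time-averaged boundary trace estimate $\int_t^{t+h}u(R,s)\,\mathrm ds\leqslant C$ (via a cut-off localized near $\partial\Omega$ and one-dimensional Gagliardo--Nirenberg, Lemma~\ref{uR-time-all}) together with pointwise bounds on $v_r$ near $\partial\Omega$ (Lemmas~\ref{lem-3.1.0} and~\ref{vr-all}), and then closing with the averaged Gronwall lemma (Lemma~\ref{lem-3.1.0.2}). Without an ingredient of this kind your argument does not close even after the exponent issue is repaired.
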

\begin{thm}\label{0.1}
Let $\Omega=B_{R}(0) \subset \mathbb R^{2}$ with $R>0$ and $\tau=0$. Assume that $S(\xi)$ satisfies $(\ref{1.0.3})$ and
\begin{align}\label{1.0.4}
  S(\xi)\geqslant k \xi^{\beta}, 
  \quad \xi \geqslant 0
\end{align}
with some $ \beta>1$ and $k>0$. Given any $u_0$ satisfying $(\ref{1.0.2})$, there exists a constant $M^{\star}=M^{\star}\left(u_0\right)>0$ such that if $M\geqslant M^{\star}$, the corresponding classical solution $(u, v)$ of $(\ref{1.0.0})$ blows up in finite time, i.e., $T_{\max}<\infty$, and 
\begin{align*}
  \limsup _{t \nearrow T_{\max }}\|u(\cdot, t)\|_{L^{\infty}(\Omega)} = \infty .
\end{align*}
\end{thm}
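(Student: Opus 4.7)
The strategy would follow the general paradigm of radial blow-up in two-dimensional Keller--Segel-type systems, adapted to this repulsion-consumption setting: transform to the cumulative mass, exploit that a large boundary signal $M$ forces a strong outward gradient $v_r$ in the bulk, and close a super-linear Osgood inequality on a weighted moment of the mass. The exponent $\beta>1$ is the supercritical complement of the boundedness range $\beta<(n+2)/(2n)=1$ in Theorem~\ref{0.3}.

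\textbf{Step 1 (scalar reduction and signal lower bound).} For radial $(u,v)$, introduce
\[
U(s,t):=2\int_0^{\sqrt{s}}\rho\,u(\rho,t)\,d\rho,\qquad s\in[0,R^2],
\]
so that $U_s(s,t)=u(\sqrt{s},t)$, $U(0,t)=0$ and $U(R^2,t)\equiv\mu:=\tfrac{1}{\pi}\int_\Omega u_0$ by \eqref{mass-0}. A direct computation with the first equation of \eqref{1.0.0} yields
\[
U_t=4sU_{ss}+2\sqrt{s}\,S(U_s)\,v_r(\sqrt{s},t),
\]
while radial integration of $0=\Delta v-uv$ gives $\sqrt{s}\,v_r(\sqrt{s},t)=\int_0^{\sqrt{s}}\rho uv\,d\rho\ge 0$. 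Writing $v=M-w$ with $-\Delta w=uv$, $w|_{\partial\Omega}=0$, the pointwise estimate $w(x,t)\le M\int_\Omega G_\Omega(x,y)u(y,t)\,dy$ together with a logarithmic-potential argument (using \eqref{1.0.2} and the conservation of mass) delivers an annular lower bound $v(r,t)\ge M/2$ on $\{r_1\le r\le R\}$ for some $r_1=r_1(u_0)\in(0,R)$; consequently
\[
\sqrt{s}\,v_r(\sqrt{s},t)\ge\tfrac{M}{4}\bigl[U(s,t)-U(r_1^2,t)\bigr]\quad\text{for }s\in[r_1^2,R^2],
\]
so that, invoking $S(\xi)\ge k\xi^\beta$,
\[
U_t\ge 4sU_{ss}+\tfrac{kM}{2}\,U_s^{\beta}\bigl[U(s,t)-U(r_1^2,t)\bigr]\quad\text{on }(r_1^2,R^2)\times(0,T_{\max}).
\]

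\textbf{Step 2 (singular moment and Osgood ODI).} Fix $s_0\in(r_1^2,R^2)$ and a target mass level $m_0\in(U(r_1^2,0),\mu)$, and track a singular weighted moment
\[
\phi(t):=\int_{r_1^2}^{s_0}(s_0-s)^{c}\bigl(m_0-U(s,t)\bigr)_+^{-b}\,ds
\]
for exponents $b,c>0$ to be tuned to $\beta$. Differentiating in $t$ and substituting the differential inequality for $U_t$, the diffusive contribution $4sU_{ss}$ is handled by two integrations by parts, the weight $(s_0-s)^{c}$ providing a Hardy-type dissipation that absorbs the boundary contributions; the chemotactic contribution feeds back through a factor $(m_0-U)_+^{-b-1}U_s^\beta[U-U(r_1^2,t)]$, and a H\"older/Young estimate against the weighted measure turns $\beta>1$ into a genuinely super-$\phi^1$ feedback. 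One obtains an ODE inequality
\[
\phi'(t)\ge c_1 M\,\phi(t)^{1+\delta}-c_2\,\phi(t)^{1+\delta'},\qquad t\in(0,T_{\max}),
\]
with $0\le\delta'<\delta$ and constants $c_1,c_2$ depending on $u_0,s_0,m_0,k,\beta$ but not on $M$.

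\textbf{Step 3 (conclusion and main obstacle).} Choosing $M\ge M^\star(u_0)$ so large that $\tfrac{c_1M}{2}\phi^{1+\delta}\ge c_2\phi^{1+\delta'}$ for every $\phi\ge\phi(0)$ yields $\phi'(t)\ge\tfrac{c_1M}{2}\phi(t)^{1+\delta}$, and a standard Osgood comparison then forces $\phi(t)\to\infty$ at some $T^\star<\infty$. Since $\phi(t)<\infty$ whenever $\|u(\cdot,t)\|_{L^\infty(\Omega)}<\infty$, this enforces $T_{\max}\le T^\star$, and the extensibility criterion in Proposition~\ref{0.0} delivers $\limsup_{t\nearrow T_{\max}}\|u(\cdot,t)\|_{L^\infty(\Omega)}=\infty$. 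The principal obstacle is the joint calibration of $(b,c)$ so that simultaneously (i) $\phi(0)<\infty$ for every admissible $u_0$, (ii) the diffusive integration by parts leaves a controlled (negative) residue, and (iii) the chemotactic term returns a power of $\phi$ strictly larger than $1$---it is precisely in the last point that the supercriticality $\beta>1$ becomes essential. A secondary but delicate technical issue is the bootstrap behind the lower bound $v\ge M/2$ on the outer annulus, since $v(0,t)$ depletes as $u$ concentrates at the origin; resolving this requires choosing $r_1$ depending only on $u_0$ so that the outer ring $\{r_1\le|x|\le R\}$ remains essentially chemical-rich throughout $[0,T_{\max})$, combined with a monotonicity argument for the radial profile of $v$.
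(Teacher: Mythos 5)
There is a genuine gap in Steps 2--3. Your blow-up criterion rests on the claim that $\phi(t)<\infty$ whenever $\|u(\cdot,t)\|_{L^\infty(\Omega)}<\infty$, but for the functional $\phi(t)=\int_{r_1^2}^{s_0}(s_0-s)^{c}\bigl(m_0-U(s,t)\bigr)_+^{-b}\,ds$ this is false. Since $U(\cdot,t)$ increases continuously from $0$ to $\mu$ and the drift transports mass toward the origin, the level set $\{U(\cdot,t)=m_0\}$ moves to smaller $s$ and will in general enter $[r_1^2,s_0]$ while $u$ is still smooth and bounded; at a simple crossing point $s^\ast$ the integrand behaves like $(s^\ast-s)^{-b}$, so $\phi$ becomes infinite for any $b\geqslant 1$, while for $b<1$ its divergence is tied to degeneracy of $U_s$ rather than to largeness of $u$. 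Hence $\phi\to\infty$ does not force $T_{\max}<\infty$, and the final implication of Step 3 collapses. A second, independent problem is the ODI itself: differentiating $\phi$ and integrating the diffusive term $4sU_{ss}$ by parts produces the negative contribution $-4b(b+1)\int s(s_0-s)^{c}(m_0-U)^{-b-2}U_s^2\,ds$, which is quadratic in $U_s$ and carries a more singular weight than the chemotactic gain, which is only of order $M(m_0-U)^{-b-1}U_s^{\beta}$; for $1<\beta\leqslant 2$ there is no reason the latter dominates, so the claimed inequality $\phi'\geqslant c_1M\phi^{1+\delta}-c_2\phi^{1+\delta'}$ is not established on the whole range $\beta>1$ covered by the theorem.

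For comparison, the paper keeps the full ball rather than an annulus: it uses $rv_r\geqslant Uv/(1+\int_0^r U\rho^{-1}\,d\rho)$ together with the lower bound of Lemma~\ref{lem-2.2.2}, $v(r,t)\geqslant M\exp[-(\frac{m_0}{2\pi}\ln\frac{R}{r})^{1/2}]$, which after Young's inequality costs only a harmless factor $s^{(\beta-1)/2}$ near the origin. It then works with the a priori \emph{bounded} moment $\phi(t)=\int_0^{R^2}s^{-\gamma}w\,ds$ of \eqref{2.3.0.1} and the auxiliary functional $\psi$ of \eqref{2.3.0.2}; the relations $\phi\leqslant C\psi^{1/(2\beta)}$ and $\int_0^s w\rho^{-1}\,d\rho\leqslant C\psi^{1/(2\beta)}$ convert \eqref{lem-2.3.2.1} into the linear inequality $\phi'\geqslant C_{45}\phi$ with $C_{45}\propto M$, and the resulting exponential growth contradicts the universal bound $\phi\leqslant\frac{m_0R^{2(1-\gamma)}}{2\pi(1-\gamma)}$ unless $T_{\max}<\infty$. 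It is in this interplay between $\phi$ and $\psi$ (not in a Hardy-type weight) that the condition $\beta>1$ enters. Your Step 1 is essentially sound --- an annular bound $v\geqslant c(u_0,r_1)M$ does follow from the paper's estimates --- but the functional of Step 2 must be replaced by one that is finite for every smooth solution and bounded a priori.
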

\begin{rem}
Due to the limitation of the method, the finite-time blow-up in the parameter region $\beta>\frac{n+2}{2n}$ for $n \geqslant 3$ is left open.
\end{rem}
\textbf{The challenge of detecting blow-up.}
Applying the method of \cite{2023-CVPDE-AhnWinkler, 2023-PRSESA-WangWinkler}, we transform the system (\ref{1.0.0}) into a single parabolic equation of the mass distribution function. We introduce a moment-type function $\phi$ (defined by $(\ref{2.3.0.1})$) and our aim is to establish a superlinear differential inequality for $\phi$ to detect finite-time blow-up. Due to the nonlinear chemotactic sensitivity, dealing with $w w_s^\beta$ is the main issue in this problem. Since the signal is degraded rather than produced, we cannot easily prove $w_{ss} \leqslant 0$ as done in \cite[Lemma 2.2]{2018-N-Winkler} to deal with this issue. To overcome this difficulty, we define an auxiliary function $\psi(t)$ in (\ref{2.3.0.2}), which is different from that in \cite{2023-CVPDE-AhnWinkler, 2023-PRSESA-WangWinkler} in establishing the differential inequality, and build up a relationship between $w$ and $\psi$ in Lemma~\ref{lem-2.3.1}.

The rest of the paper is organized as follows. The purpose of Section~\ref{section 3} is to obtain the global bounded classical solutions for system (\ref{1.0.0}) with $\tau=\{0,1\}$ and $0<\beta < \frac{n+2}{2n}$ under radial assumptions. 
In Section~\ref{section 1}, if $\Omega=B_R(0) \subset \mathbb R^2$ and $\tau=0$, we demonstrate the finite-time blow-up phenomenon under the conditions that $\beta>1$ and $M$ sufficiently large, which implies that $\beta=1$ is optimal when $\tau=0$ and $n=2$.
\section{Boundedness when $\tau=\{0,1\}$ and $0<\beta <\frac{n+2}{2n}$ }\label{section 3}
In this section, we shall prove Theorem~\ref{0.3}. We first recall a Gronwall's lemma.
\begin{lem}\label{lem-3.1.0.2}
Let $t_0 \in \mathbb{R}, T \in\left(t_0, \infty\right]$, $h>0$ and $b>0$. Suppose that the nonnegative function $g \in$ $ L_{\mathrm{loc}}^1(\mathbb{R})$ satisfies
\begin{align*}
\frac{1}{h} \int_t^{t+h} g(s) \mathrm{d} s \leqslant b, \quad t \in\left(t_0, T\right) \text {. }
\end{align*}
Then for any $a>0$ we have
\begin{align*}
\int_{t_0}^t e^{-a(t-s)} g(s) \mathrm{d} s \leqslant \frac{b h}{1-e^{-a h}}, \quad t \in\left[t_0, T\right) \text {. }
\end{align*}
Consequently, if $y \in C\left(\left[t_0, T\right)\right) \cap C^1\left(\left(t_0, T\right)\right)$ satisfies
\begin{align*}
y^{\prime}(t)+a y(t) \leqslant g(t), \quad t \in\left(t_0, T\right),
\end{align*}
then
\begin{align*}
y(t) \leqslant y\left(t_0\right)+\frac{b h}{1-e^{-a h}}, \quad t \in\left[t_0, T\right).
\end{align*}
\end{lem}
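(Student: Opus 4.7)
The first estimate is the substantive part; the second assertion follows by a standard integrating-factor argument. My plan is to decompose $[t_0, t]$ into successive slabs of length $h$ moving backwards from $t$: on each slab the exponential weight $e^{-a(t-s)}$ can be controlled by a geometric factor, while the mean-value hypothesis bounds the integral of $g$ over the slab by $bh$.

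Concretely, fix $t \in [t_0, T)$ and let $N \in \mathbb{N}$ be the smallest integer with $t - Nh \leqslant t_0$. Writing
\[
\int_{t_0}^t e^{-a(t-s)} g(s) \, \mathrm{d}s = \sum_{k=0}^{N-1} \int_{\max\{t-(k+1)h,\, t_0\}}^{t-kh} e^{-a(t-s)} g(s) \, \mathrm{d}s,
\]
I would estimate $e^{-a(t-s)} \leqslant e^{-akh}$ on the $k$-th slab and invoke the hypothesis (for $k \leqslant N-2$ with $\tau := t - (k+1)h \in (t_0, T)$, and for $k = N-1$ by noting that the slab is contained in $[t_0, t_0 + h]$, so that the hypothesis evaluated at $\tau = t_0$---legitimate by continuity of $\tau \mapsto \int_\tau^{\tau+h} g$---still applies). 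Since $g \geqslant 0$, each slab integral is then bounded by $bh$, and summing the geometric series $\sum_{k=0}^{\infty} e^{-akh} = (1 - e^{-ah})^{-1}$ yields the asserted bound.

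For the second claim, I would multiply the differential inequality by $e^{at}$ to rewrite it as $(e^{at} y)' \leqslant e^{at} g$, integrate over $(t_0, t)$, and divide by $e^{at}$ to arrive at
\[
y(t) \leqslant e^{-a(t-t_0)} y(t_0) + \int_{t_0}^t e^{-a(t-s)} g(s) \, \mathrm{d}s \leqslant y(t_0) + \frac{bh}{1 - e^{-ah}},
\]
where the last step uses $e^{-a(t-t_0)} \leqslant 1$ together with $y(t_0) \geqslant 0$ (the nonnegative case of interest in all subsequent applications).

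The only technicality worth flagging is the treatment of the final, possibly truncated, slab near $t_0$; since $g \geqslant 0$, one may simply enlarge the interval of integration to a full slab of length $h$ based at $t_0$ and apply the hypothesis there. I do not expect any deeper obstacle, as this is a clean, essentially standard computation of a type that appears frequently in the chemotaxis literature.
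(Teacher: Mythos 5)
Your argument is correct. The paper itself gives no proof here—it simply cites Lemma 3.4 of Winkler (J. Funct. Anal. 276, 2019)—and your slab decomposition with the geometric-series summation, the continuity extension of the hypothesis to $\tau=t_0$, and the integrating-factor step is exactly the standard argument behind that cited result. Your remark that the final inequality $e^{-a(t-t_0)}y(t_0)\leqslant y(t_0)$ requires $y(t_0)\geqslant 0$ is well taken: as literally stated the lemma omits this hypothesis (and is false without it, e.g.\ for $g\equiv 0$ and $y(t_0)<0$ with $b$ small), but nonnegativity holds in every application in the paper, so this is an imprecision in the statement rather than a gap in your proof.
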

\begin{proof}
Its detailed proof can be found in \cite[Lemma 3.4]{2019-JFA-Winkler}.
\end{proof}
The following two lemmas give some basic observations on $v$. 
\begin{lem}\label{lem-3.1.0}
Let  $\Omega=B_R(0) \subset \mathbb{R}^n$ $(n \geqslant 2)$ and $(u,v)$ be the classical solution of problem $(\ref{1.0.0})$ with $\tau=0$. Suppose that $S$ and $u_0$ satisfy $(\ref{1.0.3})$ and $(\ref{1.0.2})$ respectively, then there exist constants $C_{1}>0$ and $C_2=C_2(R)>0$ such that 
\begin{align}\label{nabla v}
  \|\nabla v(\cdot, t)\|_{L^{2}(\Omega)} \leqslant C_1,
  \quad t \in (0,T_{max})
\end{align}
and
\begin{align}\label{vr-0}
\left|v_r(r, t)\right| \leqslant C_{2},
 \quad (r,t) \in \Big(\frac{4R}{5},R\Big) \times \left(0, T_{\max }\right).
\end{align}
\end{lem}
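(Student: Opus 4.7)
The plan is to exploit the Dirichlet condition $v=M$ on $\partial\Omega$ together with the a priori pointwise bound $0\leqslant v\leqslant M$ from (\ref{1.0.7}) and the mass conservation (\ref{mass-0}); both estimates then follow from short and essentially direct computations, so I would organize the argument in two independent steps.

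For the $L^2$-bound on $\nabla v$, I would test the elliptic equation $\Delta v=uv$ against the function $v-M$, which lies in $W^{1,2}_0(\Omega)$ thanks to the Dirichlet condition. Integration by parts (the boundary integral vanishing because $v-M=0$ on $\partial\Omega$) should give
\begin{align*}
\int_\Omega |\nabla v|^2 \, dx = \int_\Omega uv(M-v) \, dx,
\end{align*}
and since $0\leqslant v(M-v)\leqslant M^2/4$ by (\ref{1.0.7}), the right-hand side is bounded by $\tfrac{M^2}{4}\int_\Omega u(\cdot,t)\,dx = \tfrac{M^2}{4}\|u_0\|_{L^1(\Omega)}$ in view of (\ref{mass-0}). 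This delivers $C_1$ explicitly in terms of $M$ and $\|u_0\|_{L^1(\Omega)}$.

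For the pointwise bound on $v_r$ in the outer annulus, I would use the radial symmetry of $v$: writing the elliptic equation as $(r^{n-1}v_r)_r=r^{n-1}uv$ and integrating from $0$ to $r$ (with $v_r(0,t)=0$ forced by radial smoothness) yields
\begin{align*}
r^{n-1} v_r(r,t) = \int_0^r \rho^{n-1} u(\rho,t)\, v(\rho,t) \, d\rho \leqslant \frac{M}{n\omega_n} \|u_0\|_{L^1(\Omega)},
\end{align*}
where $\omega_n$ denotes the volume of the unit ball in $\mathbb{R}^n$. Dividing by $r^{n-1}$ and restricting to $r>4R/5$ produces the claimed constant $C_2=C_2(R)$; incidentally, the same identity shows $v_r\geqslant 0$, so no absolute value is lost.

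In truth there is no serious obstacle here: once one recognizes that the Dirichlet boundary condition makes $v-M$ an admissible test function and that radial symmetry trivialises the elliptic equation, the whole argument reduces to integration by parts and to invoking $0\leqslant v\leqslant M$ together with mass conservation. The only minor point worth verifying is that the regularity of $v$ guaranteed by Proposition~\ref{0.0} is sufficient to justify both manipulations, which it clearly is.
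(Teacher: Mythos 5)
Your proposal is correct and follows essentially the same argument as the paper: testing the elliptic equation with $v-M$ (equivalently multiplying by $M-v$) together with $0\leqslant v\leqslant M$ and mass conservation for the gradient bound, and integrating the radial form $(r^{n-1}v_r)_r=r^{n-1}uv$ for the pointwise bound on $v_r$ away from the origin. The only differences are cosmetic (the sharper constant $M^2/4$ and the observation $v_r\geqslant 0$).
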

\begin{proof}
Multiplying the second equation in (\ref{1.0.0}) by $(M-v)$ and integrating by parts, using (\ref{mass-0}) and (\ref{1.0.7}), we see that 
\begin{align*}
  \int_{\Omega} |\nabla v|^2 \mathrm{d}x
  \leqslant \int_{\Omega} uv(M-v) \mathrm{d}x
 \leqslant M^2 \int_{\Omega} u \mathrm{d}x
 =M^2 \int_{\Omega} u_0 \mathrm{d}x,
 \quad t \in (0,T_{max}),
\end{align*}
which implies (\ref{nabla v}). It follows from the second equation in (\ref{1.0.0}) that
\begin{align*}
v_r
=&r^{1-n} \int_0^r \rho^{n-1} uv \mathrm{d} \rho
\leqslant r^{1-n}M \int_0^R \rho^{n-1} u \mathrm{d} \rho \notag \\
=&r^{1-n}M \frac{1}{\omega_n}\int_{\Omega} u_0 \mathrm{d} x
\leqslant \Big(\frac{4R}{5}\Big)^{1-n} M \frac{1}{\omega_n}\int_{\Omega} u_0 \mathrm{d} x,
 \quad (r,t) \in \Big(\frac{4R}{5},R\Big) \times \left(0, T_{\max }\right),
\end{align*}
which implies (\ref{vr-0}).
\end{proof}
\begin{lem}\label{heat semigroup}
Let  $\Omega=B_R(0) \subset \mathbb{R}^n$ $(n \geqslant 2)$ and $(u,v)$ be the classical solution of  problem $(\ref{1.0.0})$ with $\tau=1$, then, for each $\sigma \in\left[1, \frac{n}{n-1}\right)$, $\alpha \in (1, +\infty)$ and $r_0 \in (0,R)$, one can find constants $C_3=C_3(\sigma)>0$ and $C_4=C_4(\alpha,{r_0})>0$ such that
\begin{align}\label{nablavs}
  \|\nabla v(\cdot, t)\|_{L^{ \sigma}(\Omega)} \leqslant C_3,
  \quad t \in (0,T_{\max})
\end{align}
and 
\begin{align}\label{vrq}
\left\| v_r\right\|_{L^\alpha\left(({r_0}, R)\right)}  \leqslant C_4,
 \quad t \in (0,T_{\max}).
\end{align}
\end{lem}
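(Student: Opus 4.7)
The plan is to establish both inequalities via Duhamel formulas with suitable Dirichlet heat semigroups, combined with mass conservation (\ref{mass-1}) and the $L^\infty$ bound (\ref{vmax}).

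For (\ref{nablavs}) I will set $\tilde v := v - M$, so that $\tilde v$ vanishes on $\partial\Omega$ and satisfies $\tilde v_t = \Delta \tilde v - uv$ with $\tilde v(\cdot, 0) = v_0 - M \in W^{1,\infty}(\Omega)$. Writing $\{e^{t\Delta_D}\}_{t \geq 0}$ for the Dirichlet heat semigroup on $\Omega$, Duhamel's formula gives
\[
\tilde v(t) = e^{t\Delta_D}(v_0 - M) - \int_0^t e^{(t-s)\Delta_D}\bigl(u(s)v(s)\bigr)\,\mathrm{d}s.
\]
I will then apply the classical gradient estimate $\|\nabla e^{\tau\Delta_D} f\|_{L^\sigma(\Omega)} \leq C \tau^{-\frac{1}{2} - \frac{n}{2}(1 - \frac{1}{\sigma})} e^{-\lambda_1 \tau} \|f\|_{L^1(\Omega)}$ (with $\lambda_1$ the first Dirichlet eigenvalue), together with the uniform bound $\|u(s)v(s)\|_{L^1(\Omega)} \leq \|v_0\|_{L^\infty}\|u_0\|_{L^1(\Omega)}$ coming from (\ref{mass-1}) and (\ref{vmax}). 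The inequality (\ref{nablavs}) then reduces to checking that the exponent $\frac{1}{2} + \frac{n}{2}(1 - \frac{1}{\sigma})$ is strictly less than $1$, which holds precisely for $\sigma < \frac{n}{n-1}$; the initial-data contribution $\nabla e^{t\Delta_D}(v_0 - M)$ is bounded uniformly in $t$ because $v_0 - M \in H_0^1(\Omega)$.

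For (\ref{vrq}) I will exploit the radial structure to descend to a one-dimensional parabolic problem on the interval $(r_0/2, R)$, where the gradient $L^1$-to-$L^\alpha$ semigroup estimate carries the favorable time exponent $-1 + \frac{1}{2\alpha}$, which is integrable for every $\alpha \in [1, \infty)$. Concretely, I will fix a cutoff $\eta \in C^\infty([0, R])$ with $\eta \equiv 0$ on $[0, r_0/2]$ and $\eta \equiv 1$ on $[r_0, R]$, and set $w(r, t) := \eta(r)(v(r, t) - M)$. Denoting the radial Laplacian by $Lu := u_{rr} + \frac{n-1}{r} u_r$, a direct computation yields
\[
w_t = L w + F, \qquad F := -(L\eta)(v - M) - 2 \eta' v_r - \eta u v,
\]
with $w(r_0/2, t) = w(R, t) = 0$. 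Each term of $F$ will be bounded in $L^1((r_0/2, R))$ uniformly in $t$: $(L\eta)(v-M)$ via (\ref{vmax}); $\eta' v_r$ (supported where the weight $r^{n-1}$ is bounded below) via (\ref{nablavs}); and $\eta uv$ via (\ref{mass-1}) and (\ref{vmax}). Since $L$ is uniformly elliptic on $(r_0/2, R)$ with coefficients bounded in terms of $r_0$, its Dirichlet heat semigroup $\{e^{tL_D}\}_{t \geq 0}$ satisfies the standard one-dimensional gradient estimate
\[
\|\partial_r e^{\tau L_D} g\|_{L^\alpha((r_0/2, R))} \leq C(\alpha, r_0)\, \tau^{-\frac{1}{2} - \frac{1}{2}(1 - \frac{1}{\alpha})} e^{-\lambda \tau} \|g\|_{L^1((r_0/2, R))}.
\]
Combining this with a second Duhamel representation for $w$ with initial datum $\eta(v_0 - M) \in W^{1,\infty}$ will give $\|\partial_r w(\cdot, t)\|_{L^\alpha((r_0/2, R))} \leq C_4$ uniformly in $t$; since $\eta \equiv 1$ on $[r_0, R]$, $v_r = \partial_r w$ there, and (\ref{vrq}) will follow.

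The main obstacle will be the dimensional bottleneck in the second part: applying the $n$-dimensional Dirichlet semigroup directly to the $L^1$ source $uv$ cannot yield a gradient bound in $L^\alpha$ beyond $\alpha < \frac{n}{n-1}$, and this limitation is \emph{not} removed by cutting off and iterating inside $\Omega$, because the localized source $\eta uv$ remains only an $L^1$ function. The passage to the uniformly elliptic one-dimensional operator $L$ on $(r_0/2, R)$ uses the radial symmetry precisely to remove this dimensional restriction and thereby to cover the full range $\alpha \in (1, \infty)$ stated in the lemma.
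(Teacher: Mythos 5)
Your proposal is correct and follows essentially the same route as the paper: Duhamel for $v-M$ with the $L^1\!\to L^\sigma$ gradient smoothing estimate (the exponent condition $\tfrac12+\tfrac n2(1-\tfrac1\sigma)<1$ giving exactly $\sigma<\tfrac{n}{n-1}$) for the first bound, and a cutoff reduction of the radial equation to a one-dimensional Dirichlet problem on $(r_0/2,R)$ with an $L^1$-bounded source for the second. The only cosmetic difference is that you keep the drift $\tfrac{n-1}{r}\partial_r$ inside the generator $L$ (so you need semigroup gradient estimates for a uniformly elliptic variable-coefficient 1D operator), whereas the paper absorbs $\tfrac{n-1}{r}\chi v_r$ into the source term $b$ and works with the constant-coefficient 1D heat semigroup; both versions rest on the same inputs, namely mass conservation, $\|v\|_{L^\infty}\leqslant\|v_0\|_{L^\infty}$, and the $L^1$ control of $v_r$ on the annulus coming from the first estimate.
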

\begin{proof}
For fixed $1 \leqslant {p_0} \leqslant p_1 \leqslant \infty$, according to well-known smoothing estimates for the Dirichlet heat semigroup $\left(e^{t \Delta}\right)_{t \geqslant 0}$ on $\Omega$ (\cite{1970-TMMO-EidelmanIvasishen}, \cite[Section 48.2]{2019-QuittnerSouplet}), there exist positive constants $\lambda$, $C_{5}$ and $C_{6}$ such that 
\begin{align}\label{Delta-u_0}
\left\|\nabla e^{t \Delta} \varphi\right\|_{L^{p_1}(\Omega)} 
\leqslant C_5\|\varphi\|_{W^{1, \infty}(\Omega)} ,
\quad  \varphi \in W_0^{1, \infty}(\Omega) 
\end{align}
and
\begin{align}\label{Delta-u}
\left\|\nabla e^{t \Delta} \varphi\right\|_{L^{p_1}(\Omega)} 
\leqslant C_6 \cdot\left(1+t^{-\frac{1}{2}-\frac{n}{2}(\frac{1}{{p_0}}-\frac{1}{p_1})}\right) e^{-\lambda t}\|\varphi\|_{L^{p_0}(\Omega)}, 
\quad  \varphi \in C_0(\bar{\Omega}).
\end{align}
For fixed $\sigma \in [1,\frac{n}{n-1})$, thanks to (\ref{mass-1}), (\ref{Delta-u_0}) and (\ref{Delta-u}), we apply a variation-of-constants representation to $v$ to obtain
\begin{align*}
\left\|\nabla v(\cdot, t)\right\|_{L^\sigma(\Omega)} 
 =&\|\nabla\left(v(\cdot, t)-M\right)\|_{L^\sigma(\Omega)} \notag \\
 =&\left\|\nabla e^{t \Delta}\left(v_0-M\right)-\int_0^t \nabla e^{(t-s) \Delta}\big(u(\cdot, s) v(\cdot, s)\big) \mathrm{d} s\right\|_{L^{\sigma}(\Omega)} \notag \\
 \leqslant& C_5 \left\|v_0-M\right\|_{W^{1, \infty}(\Omega)} \notag \\
            &+ C_6 \|v_0\|_{L^{\infty}(\Omega)} \|u_0\|_{L^{1}(\Omega)} \int_0^{\infty}\left(1+t^{-\frac{1}{2}-\frac{n}{2}+\frac{n}{2\sigma}}\right) e^{-\lambda t} \mathrm{d} t,
\quad t \in (0,T_{\max}),
\end{align*}
which implies (\ref{nablavs}). 

Choosing $\chi(r) \in C^{\infty}([0, R])$ such that $0 \leqslant \chi \leqslant 1$, $\chi \equiv 0$ in $\left[0, \frac{r_0}{2}\right]$ and $\chi \equiv 1$ in $[r_0, R]$, and using the second equation in (\ref{1.0.0}), we have
\begin{align}\label{chiv-M}
\big(\chi(r) \left(v-M\right)\big)_t=\big(\chi(r)\left(v-M\right)\big)_{r r}+b(r, t),
 \quad (r,t) \in(0, R) \times \left(0, T_{\max}\right),
\end{align}
where
\begin{align}\label{b}
b(r, t) = & \left(\frac{n-1}{r} \chi(r)-2 \chi_r(r)\right) v_{r}(r, t)-\chi_{r r}(r) \left(v(r, t)-M\right) \notag \\
& -\chi(r)  u(r, t) v(r, t),
 \quad (r,t) \in(0, R) \times \left(0, T_{\max }\right) .
\end{align}
Due to Proposition~\ref{0.0-1} and (\ref{nablavs}), we can deduce that $b(r,t)\in L^{\infty}\left((0,T_{\max});L^1(\frac{r_0}{2},R)\right)$. According to the known regularity of the Dirichlet heat semigroup $\left(e^{t \Delta}\right)_{t \geqslant 0}$ in one dimension, using (\ref{chiv-M}) and $\chi \cdot (v-M)=0$ on $\left\{\frac{r_0}{2}, R\right\} \times\left(0, T_{\max}\right)$, we can find $C_7 = C_7 > 0$ such that whenever $\alpha \in (1, \infty)$,
\begin{align}\label{vr semilinear estimate}
 \left\| v_r\right\|_{L^\alpha\left(\left(r_0, R\right)\right)} 
 \leqslant & \left\|\partial_r\big(\chi \cdot\left(v(\cdot, t)-M\right)\big)\right\|_{L^{\alpha}\left(\left(\frac{r_0}{2}, R\right)\right)} \notag \\ 
 =& \left\|\partial_r e^{t \Delta}\big(\chi \cdot\left(v_0-M\right)\big)+\int_0^t \partial_r e^{(t-s) \Delta} b(\cdot, s) \mathrm{d} s\right\|_{L^{\alpha}\left(\left(\frac{r_0}{2}, R\right)\right)} \notag \\
 \leqslant & C_5\left\|\chi \cdot\left(v_0-M\right)\right\|_{W^{1, \infty}\left(\left(\frac{r_0}{2}, R\right)\right)} \notag \\
            &+ C_6 \int_0^t\left(1+(t-s)^{-1+\frac{1}{2\alpha}}\right) e^{-\lambda(t-s)}\left\|b(\cdot, s)\right\|_{L^1\left(\left(\frac{r_0}{2}, R\right)\right)} \mathrm{d} s \notag \\
 \leqslant & C_7,
  \quad t \in (0,T_{\max}).
\end{align}
We complete our proof.
\end{proof}
The following lemma provides an estimate for an integral of $u(R,t)$ with respect to $t$.
\begin{lem}\label{uR-time-all}
Let $\Omega=B_R(0) \subset \mathbb{R}^n$ $(n \geqslant 2)$ and $(u,v)$ be the classical solution of problem $(\ref{1.0.0})$ with $\tau=\{0,1\}$. Assume that $(\ref{1.0.2})$ and $(\ref{v 0})$ are valid. Suppose that $S(\xi)$ satisfies $(\ref{1.0.3})$ and $(\ref{1.0.5-1})$ with $0<\beta<\frac{n+2}{2n}$ and $K>0$, then there exists a constant $C_8=C_8(\beta,R,K)$ such that 
\begin{align}\label{uR}
\int_t^{t+h} u(R, s) \mathrm{d} s \leqslant C_8,
\quad t \in (0,T_{\max}-h),
\end{align}
where $h=\min \left\{1, \frac{1}{2} T_{\max}\right\}$.
\end{lem}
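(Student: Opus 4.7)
The natural approach is to test the first equation of \eqref{1.0.0} against the radial weight $\varphi(x) := |x|^2/2$, designed to extract a trace term involving $u(R,t)$. After two integrations by parts, the surface integral $\tfrac{R^2}{2}\int_{\partial\Omega}(\partial_\nu u + S(u)\partial_\nu v)\,dS$ vanishes by the no-flux boundary condition; combined with $\Delta\varphi = n$, $x\cdot\nabla v = r v_r$, and the radial identity $\int_{\partial\Omega} u\,dS = \omega_n R^{n-1} u(R,t)$ (with $\omega_n := |\mathbb{S}^{n-1}|$), this yields
\begin{align*}
\frac{d}{dt}\int_\Omega \varphi u\,dx + R^n\omega_n u(R,t) = n\int_\Omega u_0\,dx - \int_\Omega r S(u) v_r\,dx.
\end{align*}
Integrating over $(t,t+h)$ with $h\le 1$ and invoking $0\le\varphi\le R^2/2$ together with mass conservation leaves
\begin{align*}
R^n\omega_n \int_t^{t+h} u(R,s)\,ds \le \Bigl(\tfrac{R^2}{2}+n\Bigr)\|u_0\|_{L^1(\Omega)} - \int_t^{t+h}\int_\Omega r S(u) v_r\,dx\,ds.
\end{align*}

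For $\tau=0$, the estimate closes immediately: the identity $\Delta v = uv \ge 0$ forces $v$ to be radially subharmonic with Dirichlet maximum $M$ on $\partial\Omega$, so $v_r \ge 0$ on $\Omega$ and the remainder integral is nonnegative, giving the desired bound with $C_8$ depending only on $R$, $n$, and $\|u_0\|_{L^1}$.

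For $\tau=1$, the sign of $v_r$ is no longer prescribed, and the task is to bound $RK\int_t^{t+h}\int_\Omega (1+u)^\beta |\nabla v|\,dx\,ds$. I would split $\Omega$ at some fixed $r_0\in(0,R)$. On the outer annulus $\{r_0 < |x| < R\}$, I would use the high-integrability estimate \eqref{vrq} on $v_r$ via H\"older's inequality with exponent $\alpha = 1/(1-\beta)\in(1,\infty)$, so that the conjugate exponent $\alpha' = 1/\beta$ makes $(1+u)^{\beta\alpha'} = 1+u$ integrable by mass conservation. On the inner ball $\{|x| < r_0\}$, I would pair the pointwise $L^\sigma$ bound \eqref{nablavs} (with $\sigma < n/(n-1)$) with the time-integrated $L^2$ bound on $\nabla v$ obtained by testing the $v$-equation against $v - M$ and using $\|v\|_\infty\le\|v_0\|_\infty$; space--time Cauchy--Schwarz together with $\int(1+u)^{2\beta}\le C$ from mass then handles $\beta\le 1/2$.

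The principal obstacle is the inner estimate for $\beta\in[1/2,\tfrac{n+2}{2n})$, a range that is nonempty only when $n=2$. Here neither the naive H\"older with $\|\nabla v\|_{L^\sigma}$ (which forces $\beta < 1/n$) nor the space--time $L^2$ bound on $\nabla v$ (which suffices only for $\beta\le 1/2$) covers the entire range. Bridging this gap should exploit the fact that $(n+2)/(2n)$ is precisely the Gagliardo--Nirenberg threshold: I expect to use a GN interpolation trading $\|u\|_{L^1}$ against a suitable higher Lebesgue norm of $u$, possibly absorbing a fractional portion of $\int_t^{t+h}u(R,s)\,ds$ back into the right-hand side via a boundary trace inequality.
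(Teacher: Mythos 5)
Your weighted-moment identity is correct, and for $\tau=0$ your argument is complete and in fact more elementary than the paper's: since $r^{n-1}v_r=\int_0^r\rho^{n-1}uv\,\mathrm{d}\rho\geqslant 0$, the term $-\int_\Omega rS(u)v_r\,\mathrm{d}x$ has a favourable sign, and the trace bound follows from mass conservation alone, with no restriction on $\beta$. That part I would accept as a genuinely different (and cleaner) route for the parabolic--elliptic case.

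For $\tau=1$, however, there is a genuine gap, and it sits exactly where you flag it --- except that the problematic range is larger than you claim. Since $\frac{n+2}{2n}=\frac12+\frac1n>\frac12$ for every $n$, the interval $\bigl[\frac12,\frac{n+2}{2n}\bigr)$ is nonempty for \emph{all} $n\geqslant 2$, not only for $n=2$; so the unresolved case is the generic one. Your global test function forces you to control $\int_\Omega S(u)\,r\,v_r$ over the whole ball, including the interior, where the only available information is $u\in L^1$ and $\nabla v\in L^\sigma$ with $\sigma<\frac{n}{n-1}$ (uniformly in time) plus a space--time $L^2$ bound on $\nabla v$. Interpolating these two bounds on $\nabla v$ against integrability of $(1+u)^\beta$ coming from mass caps out precisely at $\beta\leqslant\frac12$: one checks that the required interpolation parameter $\theta\geqslant\frac{2n\beta-2}{n-2}$ is compatible with $\theta\leqslant1$ only when $\beta\leqslant\frac12$. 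Beyond that, higher integrability of $u$ is indispensable, and there is no gradient estimate on $u$ in the interior to feed a Gagliardo--Nirenberg interpolation; nor is a ``boundary trace inequality'' available without such a gradient bound. This is why the paper proceeds differently: it localizes with a cutoff $\zeta$ supported in $\{|x|>R/4\}$, tests the $u$-equation against $\zeta^2(1+u)^{a-1}$ with $a\in(0,1)$, and uses \eqref{vrq} to close the localized energy estimate, obtaining the space--time bound $\int_t^{t+h}\int_{R/2}^R\bigl|\partial_r(1+u)^{a/2}\bigr|^2\leqslant C$; the trace $\int_t^{t+h}u(R,s)\,\mathrm{d}s$ is then extracted by the one-dimensional Gagliardo--Nirenberg inequality on $(R/2,R)$. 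All estimates live near the boundary, where $v_r$ has arbitrarily high integrability, which is exactly the mechanism your global weight cannot exploit. To repair your $\tau=1$ argument you would essentially have to import this localized energy estimate, at which point the moment identity becomes superfluous.
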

\begin{proof}
We fix $\zeta \in C^{\infty}(\bar{\Omega})$ such that $0 \leqslant \zeta \leqslant 1$, $\zeta \equiv 0$ in $\bar{B}_{\frac{R}{4}}(0)$ and $\zeta \equiv 1$ in $\bar{\Omega} \backslash \bar{B}_{\frac{R}{2}(0)}$. Multiplying the first equation in (\ref{1.0.0}) by $\zeta^2 (1+u)^{a-1}$ $(0<a<1)$ and integrating by parts, we find that
\begin{align}\label{zeta2up}
  &\frac{1}{{a}} \frac{\mathrm{d}}{\mathrm{d} t} \int_{\Omega} \zeta^2 (1+u)^{a} \mathrm{d}x \notag \\
   =& \int_{\Omega} \zeta^2 (1+u)^{{a}-1} (\Delta u + \nabla \cdot(S(u)\nabla v) \mathrm{d}x \notag \\
 =& (1-{a}) \int_{\Omega} \zeta^2 (1+u)^{{a}-2}\left|\nabla u\right|^2 \mathrm{d}x
    +(1-{a}) \int_{\Omega} \zeta^2 (1+u)^{{a}-2}S(u) \nabla u \cdot \nabla v \mathrm{d}x \notag \\
& -2 \int_{\Omega} \zeta (1+u)^{{a}-1} \nabla u \cdot \nabla \zeta \mathrm{d}x
    -2 \int_{\Omega} \zeta (1+u)^{a-1} S(u) \nabla v \cdot \nabla \zeta \mathrm{d}x,
    \quad t \in (0,T_{\max}).
\end{align}
For the three terms on the right side of (\ref{zeta2up}), we apply Young's inequality and (\ref{1.0.5-1}) to deduce that 
\begin{align}\label{nablau nablav}
& \left| (1-{a}) \int_{\Omega} \zeta^2 (1+u)^{{a}-2}S(u) \nabla u \cdot \nabla v \mathrm{d}x\right| \notag \\
 \leqslant &  \frac{(1-{a})}{4} \int_{\Omega} \zeta^2 (1+u)^{{a}-2}\left|\nabla u\right|^2 \mathrm{d}x 
          + (1-a)K^2 \int_{\Omega} {\zeta}^2 (1+u)^{a+2\beta-2} |\nabla v|^2 \mathrm{d}x
\end{align}
and
\begin{align}\label{nablau nablazeta}
&\left| -2 \int_{\Omega} \zeta (1+u)^{{a}-1} \nabla u \cdot \nabla \zeta \mathrm{d}x \right| \notag \\
\leqslant & \frac{1-{a}}{4}  \int_{\Omega} \zeta^2 (1+u)^{{a}-2}\left|\nabla u\right|^2 \mathrm{d}x 
           + \frac{4}{1-{a}} \int_{\Omega} (1+u)^{a} |\nabla \zeta|^2  \mathrm{d}x ,
\end{align}
as well as
\begin{align}\label{nablav nablazeta}
&\left| -2 \int_{\Omega} \zeta (1+u)^{a-1} S(u) \nabla v \cdot \nabla \zeta \mathrm{d}x \right| \notag \\
 \leqslant & K^2\int_{\Omega} {\zeta}^2 (1+u)^{a+2\beta-2} |\nabla v|^2 \mathrm{d}x
            + \int_{\Omega} (1+u)^{a} |\nabla \zeta|^2  \mathrm{d}x .
\end{align}
For one of the same terms on the right side of (\ref{nablau nablav}), (\ref{nablau nablazeta}) and (\ref{nablav nablazeta}), we use Hölder's inequality to obtain
\begin{align}\label{up nabla zeta2}
\int_{\Omega} (1+u)^{a} |\nabla \zeta|^2  \mathrm{d}x
  \leqslant  \Big(\int_{\Omega} 1+u \mathrm{d}x\Big)^{a} 
             \cdot \Big(\int_{\Omega}|\nabla \zeta|^{\frac{2}{1-{a}}} \mathrm{d}x \Big)^{1-{a}}.
\end{align}
In the following, we divide the regions of the parameters $\beta$ and $a$ into three cases to derive the lemma.

\textit{Case 1}: $0<\beta\leqslant\frac{1}{2}$ and $0<a<1$. Thus, we have $a+2\beta-2<0$. Due to (\ref{nabla v}) and (\ref{vrq}) with $r_0=\frac{R}{4}$ and $\alpha=2>1$, along with $u>0$, we infer that
\begin{align}\label{zeta2 up+2beta-2 nablav2-1}
& \int_{\Omega} {\zeta}^2 (1+u)^{a+2\beta-2} |\nabla v|^{2} \mathrm{d}x 
\leqslant  \int_{\Omega} {\zeta}^2 |\nabla v|^{2} \mathrm{d}x
\leqslant \max\{C_1^2, \omega_n R^{n-1}C_3^2 \}.
\end{align}
Summing up (\ref{zeta2up})-(\ref{zeta2 up+2beta-2 nablav2-1}), there exists a constant $C_9=C_9(a,\beta,R,K)>0$ such that
\begin{align*}
 \frac{1-{a}}{2}\int_{\Omega} \zeta^2 (1+u)^{{a}-2}\left|\nabla u\right|^2 \mathrm{d}x
    \leqslant &  \frac{1}{{a}} \frac{\mathrm{d}}{\mathrm{d} t} \int_{\Omega} \zeta^2 (1+u)^{a} \mathrm{d}x
               +\Big(\frac{4}{1-{a}}+1\Big)  \int_{\Omega} (1+u)^{a} |\nabla \zeta|^2  \mathrm{d}x \notag \\
              &+ (2-a)K^2 \int_{\Omega} {\zeta}^2 (1+u)^{a+2\beta-2} |\nabla v|^{2} \mathrm{d}x \notag \\
    \leqslant &  \frac{1}{{a}} \frac{\mathrm{d}}{\mathrm{d}t} \int_{\Omega} \zeta^2 (1+u)^{a} \mathrm{d}x +C_9,
      \quad t \in (0,T_{\max}).
\end{align*} 
Integrating this differential equation on $(t,t+h)$ for all $t \in (0,T_{\max}-h)$ with $h=\min \left\{1, \frac{1}{2} T_{\max}\right\}$, and using (\ref{mass-0}) and (\ref{mass-1}), we derive that there exist positive constants $C_{10}=C_{10}(R)$ and $C_{11}=C_{11}(a,\beta,R,K)$ such that
\begin{align}\label{uR-time-1}
& \int_t^{t+h} \int_{\frac{R}{2}}^R\left|\left((1+u)^{\frac{{a}}{2}}\right)_r \right|^2 \mathrm{d}r \mathrm{d}s \notag \\
  \leqslant & C_{10} \int_{t}^{t+h}\int_{\Omega} \zeta^2 (1+u)^{{a}-2}\left|\nabla u\right|^2 \mathrm{d}x \mathrm{d}s \notag \\
  \leqslant &  \frac{2C_{10}}{{a}(1-{a})} \int_{\Omega} \zeta^2 \big(1+u(\cdot, t+h)\big)^{a}  \mathrm{d}x
             -\frac{2C_{10}}{{a}(1-{a})} \int_{\Omega} \zeta^2 \big(1+u(\cdot, t)\big)^{a}  \mathrm{d}x  +\frac{2C_9}{1-a}h \notag \\
  \leqslant &   \frac{2C_{10}|\Omega|^{1-{a}}}{{a}(1-{a})} \Big(\int_{\Omega} 1+u\left(\cdot, t+h\right) \mathrm{d}x\Big)^{a} +\frac{2C_9}{1-a} \notag \\
  \leqslant & C_{11}.
\end{align}
It follows from (\ref{uR-time-1}) with $a=\frac{2}{3}$ and Gagliardo-Nirenberg inequality to find a positive constant $C_{12}=C_{12}(R)$ such that  
\begin{align}\label{uRcase1}
\int_t^{t+h} u(R, s) \mathrm{d} s 
 \leqslant & \int_t^{t+h} \big(1+u(R, s)\big)^{\frac{5}{3}} \mathrm{d} s  \notag \\
 \leqslant & \int_t^{t+h} \|(1+u)^{\frac{1}{3}}(\cdot,s)\|_{L^{\infty}\left((\frac{R}{2},R)\right)}^{5} \mathrm{d} s  \notag \\
 \leqslant & C_{12}\int_t^{t+h} \|\big((1+u)^{\frac{1}{3}}\big)_r(\cdot,s)\|_{L^2\left((\frac{R}{2},R)\right)}^2  \|(1+u)^{\frac{1}{3}}(\cdot,s)\|_{L^3\left((\frac{R}{2},R)\right)}^3 \mathrm{d} s \notag \\ 
             & + C_{12}\int_t^{t+h} \|(1+u)^{\frac{1}{3}}(\cdot,s)\|_{L^3\left((\frac{R}{2},R)\right)}^5 \mathrm{d} s \notag \\
  =& C_{12}(|\Omega|+\|u_0\|_{L^1(\Omega)})C_{11}
     + C_{12}(|\Omega|+\|u_0\|_{L^1(\Omega)})^{\frac{5}{3}}
\end{align}
for all $t\in [0,T_{\max}-h)$ with $h=\min \left\{1, \frac{1}{2} T_{\max}\right\}$.

\textit{Case 2}: $\frac{1}{2}<\beta<\frac{n+2}{2n}$ and $0<a \leqslant 2-2\beta$. Thus, we have $a+2\beta-2 \leqslant 0$. Similarly, we also obtain
\begin{align}\label{uR-time-2}
\int_t^{t+h} \int_{\frac{R}{2}}^R\left|\left((1+u)^{\frac{{a}}{2}}\right)_r \right|^2 \mathrm{d}r \mathrm{d}s
\leqslant C_{11}.
\end{align}
Taking $0<a=1-\beta \leqslant 2-2\beta$ in (\ref{uR-time-2}), and upon application of Gagliardo-Nirenberg inequality, there exists constant $C_{13}=C_{13}(R)>0$ such that
\begin{align}\label{uRcase2}
\int_t^{t+h} u(R, s) \mathrm{d} s
  \leqslant & \int_t^{t+h} \big(1+u(R, s)\big)^{2-\beta} \mathrm{d} s \notag \\
  \leqslant & \int_t^{t+h} \|(1+u)^{\frac{1-\beta}{2}}(\cdot,s)\|_{L^{\infty}\left((\frac{R}{2},R)\right)}^{\frac{4-2\beta}{1-\beta}} \mathrm{d} s \notag \\
  \leqslant & C_{13}\int_t^{t+h} \|\big((1+u)^{\frac{1-\beta}{2}}\big)_r(\cdot,s)\|_{L^2\left((\frac{R}{2},R)\right)}^2
               \|(1+u)^{\frac{1-\beta}{2}}(\cdot,s)\|_{L^{\frac{2}{1-\beta}}\left((\frac{R}{2},R)\right)}^{\frac{2}{1-\beta}} \mathrm{d} s \notag \\
             & +C_{13} \int_t^{t+h} \|(1+u)^{\frac{1-\beta}{2}}(\cdot,s)\|_{L^{\frac{2}{1-\beta}}\left((\frac{R}{2},R)\right)}^{\frac{4-2\beta}{1-\beta}} \mathrm{d} s \notag \\
  =&C_{13}(|\Omega|+\|u_0\|_{L^1(\Omega)})C_{11}
    + C_{13}(|\Omega|+\|u_0\|_{L^1(\Omega)})^{2-\beta}
\end{align}
for all $t\in [0,T_{\max}-h)$ with $h=\min \left\{1, \frac{1}{2} T_{\max}\right\}$.

\textit{Case 3}: $\frac{1}{2}<\beta<\frac{n+2}{2n}$ and $2-2\beta<a<1$. Thus, we have $0<a+2\beta-2<1$. Using Hölder's inequality,  (\ref{vr-0}) and (\ref{vrq}) with $\frac{2}{3-2\beta-{a}}>0$ and $r_0=\frac{R}{4}$, for $\tau=\{0,1\}$, we can find a positive constant $C_{14}=C_{14}(a,\beta,R)$ such that
\begin{align*}
& \int_{\Omega} {\zeta}^2 (1+u)^{a+2\beta-2} |\nabla v|^{2} \mathrm{d}x \notag \\
  \leqslant & \left(\int_{\Omega} 1+u \mathrm{d}x \right)^{a+2\beta-2} 
             \cdot \left(\int_{\Omega}{\zeta}^{\frac{2}{3-2\beta-{a}}} |\nabla v|^{\frac{2}{3-2\beta-{a}}} \mathrm{d}x \right)^{3-2\beta-{a}} \notag \\
  \leqslant & \left(\int_{\Omega} 1+u \mathrm{d}x \right)^{a+2\beta-2} 
             \cdot (\omega_n R^{n-1})^{3-2\beta-{a}} \left(\int_{\frac{R}{4}}^R | v_r|^{\frac{2}{3-2\beta-{a}}} \mathrm{d}x \right)^{3-2\beta-{a}} \notag \\
  \leqslant & C_{14}.
\end{align*}
By means of this, and similar to the proof of (\ref{uR-time-1}), we also have
\begin{align}\label{uR-time-3}
\int_t^{t+h} \int_{\frac{R}{2}}^R\left|\left((1+u)^{\frac{{a}}{2}}\right)_r \right|^2 \mathrm{d}r \mathrm{d}s
\leqslant C_{11}.
\end{align}
We utilize (\ref{uR-time-3}) with $2-2\beta<a=\frac{3-2\beta}{2}<1$, along with Gagliardo-Nirenberg inequality, to find a constant $C_{15}=C_{15}(R)>0$ such that 
\begin{align}\label{uRcase3}
\int_t^{t+h} u(R, s) \mathrm{d} s
  \leqslant & \int_t^{t+h} \big(1+u(R, s)\big)^{\frac{5}{2}-\beta} \mathrm{d} s \notag \\
  \leqslant & \int_t^{t+h} \|(1+u)^{\frac{3-2\beta}{4}}(\cdot,s)\|_{L^{\infty}\left((\frac{R}{2},R)\right)}^{\frac{10-4\beta}{3-2\beta}} \mathrm{d} s \notag \\
  \leqslant & C_{15}\int_t^{t+h} \|\big((1+u)^{\frac{3-2\beta}{4}}\big)_r(\cdot,s)\|_{L^2\left((\frac{R}{2},R)\right)}^2
               \|(1+u)^{\frac{3-2\beta}{4}}(\cdot,s)\|_{L^{\frac{4}{3-2\beta}}\left((\frac{R}{2},R)\right)}^{\frac{4}{3-2\beta}} \mathrm{d} s \notag \\
             & + C_{15}\int_t^{t+h} \|(1+u)^{\frac{3-2\beta}{4}}(\cdot,s)\|_{L^{\frac{4}{3-2\beta}}\left((\frac{R}{2},R)\right)}^{\frac{10-4\beta}{3-2\beta}} \mathrm{d} s \notag \\
  =&C_{15}(|\Omega|+\|u_0\|_{L^1(\Omega)})C_{11}
    + C_{15}(|\Omega|+\|u_0\|_{L^1(\Omega)})^{\frac{5}{2}-\beta}
\end{align}
for all $t \in [0,T_{\max}-h)$ with $h=\min \left\{1, \frac{1}{2} T_{\max}\right\}$. 

Therefore, (\ref{uRcase1}) and (\ref{uRcase2}), together with (\ref{uRcase3}), imply (\ref{uR}). Moreover, for any $\beta \in (0,\frac{n+2}{2n})$ and $a\in (0,1)$, 
\begin{align}\label{uR-time-all}
\int_t^{t+h} \int_{\frac{R}{2}}^R\left|\left((1+u)^{\frac{{a}}{2}}\right)_r \right|^2 \mathrm{d}r \mathrm{d}s
\leqslant C_{11},
\quad t \in [0,T_{\max}-h),
\end{align}
as a consequence of (\ref{uR-time-1}), (\ref{uR-time-2}) and (\ref{uR-time-3}).
\end{proof}
The following lemma provides a spatio-temporal uniform bound for $v_r$ near the boundary.
\begin{lem}\label{vr-all}
Let $\Omega=B_R(0) \subset \mathbb{R}^n$ $(n \geqslant 2)$ and $(u,v)$ be the classical solution of problem $(\ref{1.0.0})$ with $\tau=1$. Suppose that $S(\xi)$ satisfies $(\ref{1.0.3})$ and $(\ref{1.0.5-1})$ with $\beta \in (0,\frac{n+2}{2n})$ and $K>0$, then for any radially symmetric $u_0$ and $v_0$ satisfy $(\ref{1.0.2})$ and $(\ref{v 0})$ respectively, there exists a constant $C_{16}=C_{16}(\beta,R,K)$ such that 
\begin{align}\label{vr}
\left|v_r(r, t)\right| \leqslant C_{16},
 \quad (r,t) \in \Big(\frac{4R}{5},R\Big) \times \left(0, T_{\max }\right).
\end{align}
\end{lem}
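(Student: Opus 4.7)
The approach parallels that of Lemma~\ref{heat semigroup} but targets an $L^\infty$ bound on $v_r$ rather than an $L^\alpha$ bound. Choose a cutoff $\chi \in C^\infty([0, R])$ with $0 \leqslant \chi \leqslant 1$, $\chi \equiv 0$ on $[0, R/4]$, and $\chi \equiv 1$ on $[4R/5, R]$. Then $\chi \cdot (v - M)$ solves the one-dimensional inhomogeneous heat equation (\ref{chiv-M}) with source $b$ given by (\ref{b}) and zero boundary values at the endpoints of $(R/4, R)$ (using $\chi(R/4) = 0$ and $v(R, t) = M$). Because $\chi \equiv 1$ on $(4R/5, R)$, we have $v_r = \partial_r[\chi(v - M)]$ on this subinterval, so the task reduces to bounding $\partial_r[\chi(v - M)]$ in $L^\infty(4R/5, R)$ uniformly in $t$.

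Apply the variation-of-constants formula and invoke the one-dimensional Dirichlet heat semigroup smoothing estimate $\|\partial_r e^{t\Delta} \varphi\|_{L^\infty} \leqslant C(1 + t^{-1/2 - 1/(2p_0)}) e^{-\lambda t} \|\varphi\|_{L^{p_0}}$ for some $p_0 > 1$. The initial-datum term is bounded by (\ref{Delta-u_0}) applied to $\chi(v_0 - M) \in W_0^{1,\infty}$. The first two summands of $b$ (those involving $v_r$ and $v - M$) have $L^{p_0}$-norms bounded uniformly in $t$ by Lemma~\ref{heat semigroup} and Proposition~\ref{0.0-1}, contributing a uniformly convergent time integral because $K(\tau) = (1 + \tau^{-1/2 - 1/(2p_0)})e^{-\lambda \tau}$ is integrable on $[0, \infty)$ when $p_0 > 1$. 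The genuinely new contribution is that of $-\chi u v$: since $\|v\|_{L^\infty} \leqslant \|v_0\|_{L^\infty}$ and mass conservation gives $\|u\|_{L^{p_0}(A)} \leqslant C\|u\|_{L^\infty(A)}^{(p_0-1)/p_0}$ (via $\|u\|_{L^{p_0}}^{p_0} \leqslant \|u\|_{L^\infty}^{p_0-1}\|u\|_{L^1}$), everything reduces to controlling $\int_0^t K(t-s)\|u(\cdot, s)\|_{L^\infty(A)}^{(p_0-1)/p_0} ds$ uniformly in $t$, where $A = (R/4, R)$.

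For this, combine the time-integrated estimate (\ref{uR-time-all}) of Lemma~\ref{uR-time-all} with the 1D Gagliardo-Nirenberg inequality applied to $f = (1+u)^{a/2}$: using $\|f\|_{L^\infty}^{2/a + 2} \leqslant C\|f_r\|_{L^2}^2 \|f\|_{L^{2/a}}^{2/a}$ together with $\|f\|_{L^{2/a}}^{2/a} \leqslant C$ (from mass conservation) yields $\|(1+u)\|_{L^\infty(A)}^{1+a} \leqslant C\|f_r\|_{L^2}^2$, whence $\int_t^{t+h}\|u(\cdot, s)\|_{L^\infty(A)}^{1+a} ds \leqslant C$ for every $a \in (0, 1)$. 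Finally, split the convolution into a short-time singular part on $(t-1, t)$, handled by Hölder's inequality in time with the exponent pair chosen so that $K$ is integrable against the corresponding dual of $\|u\|_{L^\infty(A)}^{(p_0-1)/p_0} \in L^{(1+a)p_0/(p_0-1)}_s$, and a long-time decaying part on $(0, t-1)$, handled via $K(\tau) \leqslant C e^{-\lambda \tau/2}$ and Lemma~\ref{lem-3.1.0.2}. The main obstacle is the sharp alignment of the singular-kernel exponent with the time-integrability exponent for $\|u\|_{L^\infty(A)}$; the freedom to take $p_0 > 1$ large and $a \in (0, 1)$ close to $1$, together with the hypothesis $\beta < (n+2)/(2n)$ that underpins (\ref{uR-time-all}), is precisely what is needed to close this estimate, yielding the desired uniform pointwise bound $|v_r(r, t)| \leqslant C_{16}$ on $(4R/5, R) \times (0, T_{\max})$.
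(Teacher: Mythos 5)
Your overall architecture (cutoff $\chi$, the 1D problem (\ref{chiv-M})--(\ref{b}), Duhamel with the Dirichlet heat semigroup) matches the paper's, but the final step — bounding the singular part of the convolution $\int_{t-1}^{t}(t-s)^{-\frac12-\frac{1}{2p_0}}\|u(\cdot,s)v(\cdot,s)\|_{L^{p_0}(A)}\,\mathrm{d}s$ by H\"older in time against the space-time integrability of $u$ — is scaling-critical and does not close. Concretely, writing $\theta=\frac{p_0-1}{p_0}$, the kernel exponent is $\sigma=\frac12+\frac{1}{2p_0}=1-\frac{\theta}{2}$, while your Gagliardo--Nirenberg step only gives $\|u\|_{L^\infty(A)}^{\theta}\in L^{\mu}$ locally uniformly in time for $\mu<\frac{2}{\theta}$. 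H\"older then forces $\frac{1}{\mu'}<1-\frac{\theta}{2}=\sigma$, i.e.\ $\sigma\mu'>1$, so $\tau^{-\sigma}\notin L^{\mu'}(0,1)$: the two exponent constraints are exactly incompatible. Working it out, the requirement reduces to $p_0(1-a)<1-a$ with $a<1$, i.e.\ $p_0<1$ — so no choice of "$p_0$ large and $a$ close to $1$" rescues the argument; the estimate misses by an endpoint for every admissible pair. The same criticality occurs if you instead put $b$ in a space-time Lebesgue space $L^{2+a}_{s,r}$ using (\ref{u2+p1}).

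The paper closes exactly this gap by inserting an intermediate regularity upgrade before invoking Duhamel: testing the first equation with $\zeta^{2}(1+u)$ yields the differential inequality (\ref{zeta2 up-ineq}) whose right-hand side $g(t)$ has bounded averages over unit time windows thanks to (\ref{u2+p1}), and the Gronwall-type Lemma~\ref{lem-3.1.0.2} then converts this into the \emph{time-uniform} bound $\sup_{t}\int_{3R/4}^{R}u^{2}\,\mathrm{d}r\leqslant C$ in (\ref{u2}). With that, $b\in L^{\infty}\bigl((0,T_{\max});L^{2}((\tfrac{3R}{4},R))\bigr)$ as in (\ref{b2}), the Duhamel kernel exponent becomes $-\tfrac12-\tfrac14=-\tfrac34$, which is integrable with no H\"older balancing needed, and the $L^{\infty}$ bound on $v_{r}$ follows. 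To repair your proof you need this (or an equivalent) step promoting the space-time integrability of $u$ near the boundary to an estimate uniform in $t$; as written, the convolution estimate fails.
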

\begin{proof}
We choose $\zeta \in C^{\infty}([0, R])$ such that $0 \leqslant \zeta \leqslant 1$, $\zeta \equiv 0$ in $\bar{B}_{\frac{R}{2}}(0)$ and $\zeta \equiv 1$ in $\bar{\Omega} \backslash B_\frac{3R}{4}(0)$. Multiplying the first equation in (\ref{1.0.0}) by $\zeta^2 (1+u)$, integrating by parts, and then applying Young's inequality, we find that
\begin{align}\label{vr zeta2 up}
  &\frac{1}{{2}} \frac{\mathrm{d}}{\mathrm{d} t} \int_{\Omega} \zeta^2 (1+u)^{2} \mathrm{d}x 
  + \int_{\Omega} \zeta^2 (1+u)^{2} \mathrm{d}x \notag \\
  = & - \int_{\Omega} \zeta^2 \left|\nabla u\right|^2 \mathrm{d}x
      - \int_{\Omega} \zeta^2 S(u) \nabla u \cdot \nabla v \mathrm{d}x \notag \\
  & -2 \int_{\Omega} \zeta (1+u) \nabla u \cdot \nabla \zeta \mathrm{d}x
    -2 \int_{\Omega} \zeta (1+u)S(u)\nabla v \cdot \nabla \zeta \mathrm{d}x
    + \int_{\Omega} \zeta^2 (1+u)^{2} \mathrm{d}x \notag \\
  \leqslant & \frac{3K^2}{2} \int_{\Omega} \zeta^2 (1+u)^{2\beta} |\nabla v|^{2} \mathrm{d}x 
             +3 \int_{\Omega} (1+u)^{2}  |\nabla \zeta|^2 \mathrm{d}x \notag \\
             & + \int_{\Omega} \zeta^2 (1+u)^{2} \mathrm{d}x,
  \quad t \in (0,T_{\max}).
\end{align}

For $a\in (0,1)$, due to $0<\beta<\frac{n+2}{2n}$, we have $\frac{2+a}{2\beta}>1$. This enable us to apply Young's inequality to obtain
\begin{align}\label{zeta2 up nablav2-4alpha}
\int_{\Omega} \zeta^2 (1+u)^{2\beta} |\nabla v|^{2} \mathrm{d}x 
\leqslant & \int_{\Omega \backslash B_{\frac{R}{2}}(0)} (1+u)^{{2+{a}}} \mathrm{d}x
          +\int_{\Omega \backslash B_{\frac{R}{2}}(0)}\left|\nabla v\right|^{\frac{2(2+a)}{2+a-2\beta}} \mathrm{d}x \notag \\
\leqslant & \int_{\Omega \backslash B_{\frac{R}{2}}(0)} (1+u)^{{2+{a}}} \mathrm{d}x +C_{17},
\end{align}
where (\ref{vrq}) with $r_0=\frac{R}{2}$ and $\alpha=\frac{2(2+a)}{2+a-2\beta}$ is used. Again using Young's inequality, we have
\begin{align}\label{0}
\int_{\Omega} (1+u)^2  |\nabla \zeta|^2 \mathrm{d}x
  \leqslant \int_{\Omega \backslash B_{\frac{R}{2}}(0)} (1+u)^{2+{a}} \mathrm{d}x
            + \int_{\Omega \backslash B_{\frac{R}{2}}(0)} \left|\nabla \zeta \right|^{\frac{2(2+{a})}{{{a}}}} \mathrm{d}x
\end{align}
and
\begin{align}\label{zeta2 up}
\int_{\Omega} \zeta^2 (1+u)^2 \mathrm{d}x
  \leqslant \int_{\Omega \backslash B_{\frac{R}{2}}(0)} (1+u)^{{2+{a}}} \mathrm{d}x
            +  \int_{\Omega \backslash B_{\frac{R}{2}}(0)} \zeta^{\frac{2(2+{a})}{{{a}}}} \mathrm{d}x.
\end{align}
Substituting (\ref{zeta2 up nablav2-4alpha})-(\ref{zeta2 up}) into (\ref{vr zeta2 up}) implies the existence of a positive constant $C_{18}=C_{18}(a,\beta,R)$ such that
\begin{align}\label{zeta2 up-ineq}
&\frac{1}{2} \frac{\mathrm{d}}{\mathrm{d} t} \int_{\Omega} \zeta^2 (1+u)^2 \mathrm{d}x 
+ \int_{\Omega} \zeta^2 (1+u)^2 \mathrm{d}x \notag \\
 \leqslant & \left(\frac{3K^2}{2}+4\right)\int_{\Omega \backslash B_{\frac{R}{2}}(0)} (1+u)^{{2+{a}}}(x,t) \mathrm{d}x +C_{18} \notag \\ \overset{\Delta}{=}& g(t) ,
 \quad t \in (0,T_{\max}).
\end{align}
We estimate
\begin{align}\label{u}
\|(1+u)^{\frac{a}{2}}\|_{L^{\frac{2}{a}}\left((\frac{R}{2},R)\right)}^{\frac{2}{a}} 
  = & \int_{\frac{R}{2}}^{R} \big(1+u(r,t)\big) \mathrm{d}r \notag \\ 
  \leqslant & \Big(\frac{2}{R}\Big)^{n-1} \int_{\frac{R}{2}}^{R} r^{n-1}\big(1+u(r,t)\big) \mathrm{d}r \notag \\ 
  \leqslant & \Big(\frac{2}{R}\Big)^{n-1}\frac{1}{\omega_n} \int_{\Omega} (1+u_0) \mathrm{d}x.
\end{align}
By Gagliardo-Nirenberg inequality, there exists a positive constant $C_{19}=C_{19}(a,R)$ such that
\begin{align*}
 \int_{\Omega \backslash B_{\frac{R}{2}}(0)} (1+u)^{{2+{a}}} \mathrm{d}x
  \leqslant & \omega_n R^{n-1} \int_{\frac{R}{2}}^{R} \big(1+u(r,t)\big)^{{2+{a}}} \mathrm{d}r \notag \\
  =&\omega_n R^{n-1} \|(1+u)^{\frac{a}{2}}\|_{L^{\frac{2(2+a)}{a}}\left((\frac{R}{2},R)\right)}^{\frac{2(2+a)}{a}} \notag \\
  \leqslant & C_{19}\|\big((1+u)^{\frac{a}{2}}\big)_r\|_{L^2\left((\frac{R}{2},R)\right)}^2  \|(1+u)^{\frac{a}{2}}\|_{L^{\frac{2}{a}}\left((\frac{R}{2},R)\right)}^{\frac{4}{a}} \notag \\
             &+ C_{19}\|(1+u)^{\frac{a}{2}}\|_{L^{\frac{2}{a}}\left((\frac{R}{2},R)\right)}^{\frac{2(2+a)}{a}}.
\end{align*}
 Combining this with (\ref{uR-time-all}) and (\ref{u}) guarantees the existence of a positive constant $C_{20}=C_{20}(a,\beta,R,K)>0$ such that
\begin{align}\label{u2+p1}
\int_t^{t+h}\int_{\Omega \backslash B_{\frac{R}{2}}(0)} (1+u)^{{2+{a}}} \mathrm{d}x \mathrm{d}s \leqslant C_{20}
\end{align}
for all $t \in [0,T_{\max}-h)$ with $h=\min \left\{1, \frac{1}{2} T_{\max}\right\}$. Combining (\ref{u2+p1}) with (\ref{zeta2 up-ineq}), and applying Lemma~\ref{lem-3.1.0.2} to (\ref{zeta2 up-ineq}), we have
\begin{align}\label{u2}
\int_{\frac{3R}{4}}^R u^2 \mathrm{d}r 
  \leqslant \Big(\frac{4}{3R}\Big)^{n-1}\frac{1}{\omega_n}\int_{\Omega}(1+u_0)^2\mathrm{d}x + \Big(\frac{4}{3R}\Big)^{n-1}\frac{1}{\omega_n}\frac{2C_{21}}{(1-e^{-2})},
\quad t \in (0,T_{\max}),
\end{align}
where 
\begin{align*}
C_{21}=C_{21}(a,\beta,R,K):= \sup _{t \in\left(0, T_{\max}-h\right)} \int_t^{t+h} g(s) \mathrm{d} s
\end{align*}
due to (\ref{u2+p1}) and $g(t)$ is defined in (\ref{zeta2 up-ineq}). 

We fix $\chi \in C^{\infty}([0, R])$ such that $0 \leqslant \chi \leqslant 1$, $\chi \equiv 0$ in $\left[0, \frac{3R}{4}\right]$ and $\chi \equiv 1$ in $[\frac{4R}{5}, R]$. For $b(x,t)$ defined in (\ref{b}), along with (\ref{u2}) and (\ref{vr semilinear estimate}), we have the estimate 
\begin{align}\label{b2}
\left\|b(\cdot, t)\right\|_{L^2 \left((\frac{3R}{4}, R)\right)} \leqslant C_{22},
\quad t \in (0,T_{\max})
\end{align}
for some constant $C_{22}=C_{22}(\beta,R,K)>0$. Thus, by (\ref{Delta-u_0}) and (\ref{Delta-u}), we have
\begin{align*}
  \left\|v_r(\cdot, t)\right\|_{L^{\infty}\left((\frac{4R}{5}, R)\right)} 
  \leqslant &\left\|\partial_r\big(\chi \cdot\left(v(\cdot, t)-M\right)\big)\right\|_{L^{\infty}\left((\frac{3R}{4}, R)\right)} \notag \\
  \leqslant & C_5\left\|\chi \cdot\left(v_0-M\right)\right\|_{W^{1, \infty}\left((\frac{3R}{4}, R)\right)}+C_6C_{22} \int_0^{\infty}\Big(1+t^{-\frac{3}{4}}\Big) e^{-\lambda t} \mathrm{d} t
\end{align*}
for all $t \in (0,T_{\max})$, which implies (\ref{vr}).
\end{proof}
\begin{lem}\label{p-q}
Let $\Omega=B_R(0) \subset \mathbb{R}^n$ $(n \geqslant 3)$ and $(u,v)$ be the classical solution of problem $(\ref{1.0.0})$ with $\tau=\{0,1\}$. Suppose that $S(\xi)$ satisfies $(\ref{1.0.3})$ and $(\ref{1.0.5-1})$ with $\beta \in (0,\frac{n+2}{2n})$ and $K>0$. Assume that $(\ref{1.0.2})$ and $(\ref{v 0})$ are valid. For $q > \max \left\{1, \frac{n-2}{2}\right\}$ and $p \in \left[2 q+2, \frac{(2 n-2) q}{n-2}+1\right]$, there exists a constant $C_{23}>0$ such that 
\begin{align}\label{p-n}
\|\nabla v\|_{L^p(\Omega)} 
  \leqslant C_{23}\left\||\nabla v|^{q-1}  |D^2v| \right\|_{L^2(\Omega)}^{\frac{p-2}{p q}}
            +C_{23}.
\end{align}
If $n=2$, for $q \geqslant 1$ and $p \geqslant 2 q+2$, we also have
\begin{align}\label{p-2}
\|\nabla v\|_{L^p(\Omega)} 
  \leqslant C_{23}\left\||\nabla v|^{q-1}  |D^2v| \right\|_{L^2(\Omega)}^{\frac{p-2}{p q}}
            +C_{23}.
\end{align}
\end{lem}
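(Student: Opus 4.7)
}
The plan is to apply the Gagliardo--Nirenberg interpolation inequality to the composite function $w := |\nabla v|^q$, combined with the a priori bounds on $\nabla v$ supplied by Lemmas~\ref{lem-3.1.0}, \ref{heat semigroup}, and~\ref{vr-all}. The pointwise inequality $|\nabla |\nabla v|| \leqslant |D^2 v|$, valid a.e.\ where $\nabla v \neq 0$, yields
\[
|\nabla w| \leqslant q |\nabla v|^{q-1} |D^2 v| \qquad \text{a.e.\ in } \Omega,
\]
so that $\|\nabla w\|_{L^2(\Omega)} \leqslant q \||\nabla v|^{q-1}|D^2 v|\|_{L^2(\Omega)}$. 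Together with the identity $\|w\|_{L^{p/q}(\Omega)}^{1/q} = \|\nabla v\|_{L^p(\Omega)}$, this transfers the problem to estimating $\|w\|_{L^{p/q}}$ in terms of $\|\nabla w\|_{L^2}$ and a lower-order norm of $w$.

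I then invoke the Gagliardo--Nirenberg inequality on the bounded domain $\Omega$,
\[
\|w\|_{L^{p/q}(\Omega)} \leqslant C\|\nabla w\|_{L^2(\Omega)}^\theta\|w\|_{L^s(\Omega)}^{1-\theta} + C\|w\|_{L^s(\Omega)},
\]
and make the choice $\theta = (p-2)/p$, which via the scaling identity $q/p = \theta(1/2 - 1/n) + (1-\theta)/s$ fixes $s = 4n/[2nq - (p-2)(n-2)]$. The lower bound $p \geqslant 2q+2$ guarantees $\theta \geqslant 0$, while for $n \geqslant 3$ the upper bound $p \leqslant (2n-2)q/(n-2)+1$ together with $q > (n-2)/2$ keeps $p/q$ strictly below the Sobolev conjugate $2n/(n-2)$ and produces a positive, finite $s$. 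For $n = 2$ the GN scaling collapses to $q/p = (1-\theta)/s$, so only $\theta \in [0,1)$ is needed, which is why no upper bound on $p$ appears in that case.

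The main technical step is to control $\|w\|_{L^s(\Omega)} = \|\nabla v\|_{L^{qs}(\Omega)}^q$ by a constant. When $qs$ falls within the admissible range of Lemma~\ref{lem-3.1.0} (for $\tau=0$) or Lemma~\ref{heat semigroup} (for $\tau=1$), this is immediate; otherwise I split $\Omega = B_{r_0}(0) \cup (\Omega \setminus B_{r_0}(0))$ with $r_0 \in (4R/5, R)$, bound the outer contribution by the uniform pointwise estimate $|v_r| \leqslant C$ from Lemmas~\ref{lem-3.1.0} and~\ref{vr-all}, and interpolate the inner contribution via Hölder between the bounded $L^\sigma$-norm of $\nabla v$ and $\|\nabla v\|_{L^p}$ itself, producing a self-referential inequality of the form
\[
\|\nabla v\|_{L^p(\Omega)} \leqslant C \||\nabla v|^{q-1}|D^2 v|\|_{L^2(\Omega)}^{(p-2)/(pq)}\,\|\nabla v\|_{L^p(\Omega)}^\mu + C
\]
with $\mu < 1$. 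Young's inequality then absorbs the $\|\nabla v\|_{L^p}^\mu$ factor, yielding the claim.

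The principal obstacle is precisely this absorption: the exponent $\mu$ must remain strictly less than $1$ throughout the admissible range of $p$, and a careful bookkeeping of $\theta$, $s$, and the interpolation weights shows that the stated upper bound $(2n-2)q/(n-2)+1$ is exactly the threshold beyond which $\mu \nearrow 1$ and the absorption argument fails. Verifying this threshold rigorously, and showing that the constants produced are independent of $t$, is the delicate part of the proof.
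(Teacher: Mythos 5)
There is a genuine gap in the final absorption step, and it is fatal to the stated exponent. For $n\geqslant 3$ your choice $\theta=(p-2)/p$ forces the low-order norm $\|w\|_{L^s(\Omega)}=\|\nabla v\|_{L^{qs}(\Omega)}^q$ with $qs=\frac{4nq}{2nq-(p-2)(n-2)}$, which equals $n$ already at $p=2q+2$ and exceeds it for larger $p$; but the only a priori bounds available are $\|\nabla v\|_{L^2}\leqslant C_1$ from (\ref{nabla v}) when $\tau=0$ and $\|\nabla v\|_{L^\sigma}\leqslant C_3$ with $\sigma<\frac{n}{n-1}<2$ from (\ref{nablavs}) when $\tau=1$. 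So your fallback is not an exceptional case but the generic one, and it produces $X\leqslant C\,D^{\frac{p-2}{pq}}X^{\mu}+C$ with $X=\|\nabla v\|_{L^p(\Omega)}$, $D=\||\nabla v|^{q-1}|D^2v|\|_{L^2(\Omega)}$ and $\mu>0$. Young's inequality turns this into $X\leqslant C\,D^{\frac{p-2}{pq(1-\mu)}}+C$, \emph{not} $X\leqslant C\,D^{\frac{p-2}{pq}}+C$: absorbing a positive power of $X$ necessarily inflates the exponent on $D$ by the factor $\frac{1}{1-\mu}$. This is not a cosmetic loss. In the application (Lemma~\ref{lem-3.1.2}, estimate (\ref{v-lemma p-q}) with $q=m$, $p=2m+2$) the resulting power of $D$ must stay strictly below $2$ so that it can be absorbed into the dissipation term, and with the inflated exponent this fails for $n=3$ and $m$ large. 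So your argument proves a strictly weaker inequality than the one claimed.

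The missing idea is the one the cited lemmas (Wang--Xiang, Lemma 3.3; Li--Suen--Winkler--Xue, Lemma 2.1) are built on: write $\int_\Omega|\nabla v|^p=-\int_\Omega v\,\nabla\cdot\big(|\nabla v|^{p-2}\nabla v\big)+\int_{\partial\Omega}|\nabla v|^{p-2}v\,\tfrac{\partial v}{\partial n}$ and exploit the uniform bound on $\|v\|_{L^\infty(\Omega)}$ from (\ref{1.0.7}) and (\ref{vmax}). It is this extra factor of $v$ in $L^\infty$ that lets one reach the exponent $\frac{p-2}{pq}$ while only requiring a very low-order norm of $\nabla v$ (the $L^2$ or $L^\sigma$ bounds above) as the interpolation anchor. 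Your scheme discards this leverage entirely, which is precisely why you are forced into the self-referential inequality. Finally, note that the integration by parts brings in the boundary term $\int_{\partial\Omega}|\nabla v|^{p-2}v\,\frac{\partial v}{\partial n}$ -- absent in the Neumann setting of the cited references -- and controlling it via the uniform near-boundary bounds on $v_r$ in (\ref{vr-0}) and (\ref{vr}) (leading to (\ref{partialv-1})) is the one genuinely new ingredient of this lemma in the Dirichlet setting; your proposal bypasses it only because it bypasses the integration by parts that makes the correct exponent attainable.
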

\begin{proof}
Due to (\ref{vr-0}), (\ref{vr}), $p>1$ and the condition $v = M$ on $\partial \Omega$, for all $0<\beta<\frac{n+2}{2n}$ and $t \in (0,T_{\max})$, there exists a constant $C_{24}>0$ such that
\begin{align}\label{partialv-1}
\int_{\partial \Omega} |\nabla v|^{p-2}v\frac{\partial v}{\partial n} \mathrm{d}x
  \leqslant |\partial \Omega| M \left\|v_ r\right\|_{L^{\infty}\left((\frac{4R}{5}, R) \times\left(0, T_{\max }\right)\right)}^{p-1}
  \leqslant C_{24}.
\end{align}
Utilizing (\ref{partialv-1}), (\ref{1.0.7}) and (\ref{vmax}), and following a proof similar to the one used in Lemma 3.3 of \cite{2015-ZAMP-WangXiang}, we obtain (\ref{p-n}). Combining (\ref{partialv-1}) with (\ref{nablavs}), (\ref{nabla v}), (\ref{1.0.7}) and (\ref{vmax}), and similar to \cite[Lemma 2.1]{2015-MMMAS-LiSuenWinklerXue}, we deduce (\ref{p-2}).
\end{proof}
To derive the boundedness of $\int_{\Omega} u^p \mathrm{d}x$, we first establish following differential inequality.
\begin{lem}\label{lem-3.1.2}
Let $\Omega=B_R(0) \subset \mathbb{R}^n$ $(n \geqslant 2)$ and $(u,v)$ be the classical solution of problem $(\ref{1.0.0})$ with $\tau=\{0,1\}$. Suppose that $S(\xi)$ satisfies $(\ref{1.0.3})$ and $(\ref{1.0.5-1})$ with $\beta \in (0,\frac{n+2}{2n})$ and $K>0$. Assume that $(\ref{1.0.2})$ and $(\ref{v 0})$ are valid. Then for any 
\begin{align}\label{m}
m>\max\{1,\frac{n-2}{2},\frac{2n\beta-n}{2n-2n\beta+2}\}
\end{align}
and 
\begin{align}\label{p}
\max \{1,\frac{m}{m+1}+2-2\beta,m+1-\frac{2}{n},\frac{2(n-2)(m+1)(\beta-1)}{2m+2-n}\}<p<(m+1)(2-2\beta)+\frac{2m}{n},
\end{align}
 there exist constants $C_{25}=C_{25}(\beta,R,m,p,K)>0$ and $C_{26}=C_{26}(\beta,R,m,p,K)>0$ such that 
\begin{align}\label{all-2}
 & \frac{\mathrm{d}}{\mathrm{d} t} \left(\frac{\tau}{2m}\int_{\Omega} |\nabla v|^{2m} \mathrm{d}x 
   + \frac{1}{p} \int_{\Omega}(1+u)^p \mathrm{d}x \right)  
   + C_{25} \int_{\Omega} |\nabla v|^{2(m-1)} |D^2 v|^2 \mathrm{d}x\notag \\
  &  +  C_{25}\int_{\Omega} (1+u)^{p-2}|\nabla u|^2 \mathrm{d}x   \notag \\
\leqslant & \frac{1}{2} \int_{\partial \Omega} |\nabla v|^{2(m-1)} \frac{\partial |\nabla v|^2}{\partial n} \mathrm{d}x 
             + \int_{\partial \Omega} |\nabla v|^{2(m-1)} uv \left|\frac{\partial v}{\partial n}\right| \mathrm{d}x  +C_{26}
\end{align}
for all $t \in (0,T_{\max})$.
\end{lem}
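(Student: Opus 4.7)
The plan is to combine two energy-type identities and then absorb the chemotactic cross-term via H\"older's inequality together with the Gagliardo--Nirenberg inequality and Lemma~\ref{p-q}.

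First, I would test the first equation of (\ref{1.0.0}) by $(1+u)^{p-1}$; after integration by parts the no-flux condition eliminates all boundary contributions, yielding the good dissipation $(p-1)\int_{\Omega}(1+u)^{p-2}|\nabla u|^2$ and the cross-term $-(p-1)\int_{\Omega}(1+u)^{p-2}S(u)\nabla u\cdot\nabla v$. Young's inequality together with (\ref{1.0.5-1}) absorbs half of the dissipation and leaves a bad term of the form $C\int_{\Omega}(1+u)^{p+2\beta-2}|\nabla v|^2$.

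Second, for the $v$-identity I would differentiate $\int_{\Omega}|\nabla v|^{2m}$ in time, use $\tau v_t=\Delta v-uv$, and invoke the Bochner-type identity $\nabla v\cdot\nabla\Delta v=\tfrac12\Delta|\nabla v|^2-|D^2v|^2$. Integration by parts then produces the good dissipation $\int_{\Omega}|\nabla v|^{2(m-1)}|D^2v|^2$, the non-negative term $(m-1)\int_{\Omega}|\nabla v|^{2(m-2)}\bigl|\nabla|\nabla v|^2\bigr|^2$, the two displayed boundary contributions, and further interior pieces obtained by expanding $\nabla(uv)=v\nabla u+u\nabla v$ and integrating by parts in the $v\nabla u$ factor. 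After invoking $v\leqslant M$ (and $v\leqslant\|v_0\|_{L^\infty}$ when $\tau=1$), these reduce essentially to $\int_{\Omega} u|\nabla v|^{2m}$ plus terms controllable by the two good dissipations. The case $\tau=0$ is handled by the same manipulation starting from the identity $0=\int_{\Omega}|\nabla v|^{2(m-1)}\nabla v\cdot\nabla(\Delta v-uv)$, the time-derivative term then being absent.

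Adding the two identities, the task reduces to absorbing $\int_{\Omega}(1+u)^{p+2\beta-2}|\nabla v|^2$ and $\int_{\Omega} u|\nabla v|^{2m}$ into the two good dissipations. For each, I would split via H\"older into a power of $(1+u)^{p/2}$ and a power of $|\nabla v|$, control $(1+u)^{p/2}$ by Gagliardo--Nirenberg (with its low-norm factor bounded by mass conservation (\ref{mass-0})/(\ref{mass-1})), and control $|\nabla v|$ by Lemma~\ref{p-q} applied with $q=m$. The restrictions (\ref{m}) and (\ref{p}) are tailored precisely so that the auxiliary $L^p$-norm of $\nabla v$ lies in the admissible range $[2m+2,\tfrac{(2n-2)m}{n-2}+1]$ of Lemma~\ref{p-q} and so that the combined GN/Lemma~\ref{p-q}-exponent on the two good quantities is strictly less than $1$; Young's inequality then closes the estimate. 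The main technical hurdle is exactly this last exponent-bookkeeping: verifying that both bad cross-terms admit a simultaneous absorption under (\ref{m})--(\ref{p}), where the subcritical ceiling $p<(m+1)(2-2\beta)+2m/n$ plays the role of the sharp Young-inequality threshold, the lower bounds $p>m+1-2/n$ and $p>\tfrac{2(n-2)(m+1)(\beta-1)}{2m+2-n}$ (vacuous when $n=2$) guarantee admissibility of Lemma~\ref{p-q}, and $m>\tfrac{2n\beta-n}{2n-2n\beta+2}$ is equivalent to the interval (\ref{p}) being non-empty.
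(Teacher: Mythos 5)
Your plan coincides with the paper's proof: test the first equation with $(1+u)^{p-1}$, test $\int_\Omega|\nabla v|^{2m}$ via the Bochner identity while keeping the two displayed boundary integrals, and absorb the resulting cross-terms $\int_\Omega(1+u)^{p-2+2\beta}|\nabla v|^2$ and $\int_\Omega u^2v^2|\nabla v|^{2(m-1)}$ (the paper's version of your $\int_\Omega u|\nabla v|^{2m}$, obtained by integrating $\nabla(uv)$ by parts wholesale rather than expanding it first; the two are interchangeable here) by H\"older, Gagliardo--Nirenberg and Lemma~\ref{p-q} with $q=m$ and exponent $2m+2$, closing with Young's inequality. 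One bookkeeping correction: admissibility of Lemma~\ref{p-q} at exponent $2m+2$ is secured by $m>\max\{1,\tfrac{n-2}{2}\}$ in (\ref{m}), not by the lower bounds on $p$ in (\ref{p}); those lower bounds instead guarantee that the Gagliardo--Nirenberg interpolation exponents lie in $(0,1)$ and that the combined exponents are subcritical for Young's inequality.
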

\begin{proof}
We first verify (\ref{p}) using (\ref{m}), which justifies our choice of $p$. Due to $m>\frac{2n\beta-n}{2n-2n\beta+2}>\frac{2n\beta-2-n}{2n-2n\beta+2}$ and $\beta<\frac{n+2}{2n}<\frac{n+1}{n}$, we have
\begin{align}\label{fixp-1}
&\frac{m}{m+1}+2-2\beta-\Big((m+1)(2-2\beta)+\frac{2m}{n}\Big) \notag \\
=&\frac{m}{m+1}-m\Big(2-2\beta+\frac{2}{n}\Big) \notag \\
=&\frac{m}{m+1}\Big(1-\big(2-2\beta+\frac{2}{n}\big)(m+1)\Big)  \notag \\
<&\frac{m}{m+1}\Big(1-\big(2-2\beta+\frac{2}{n}\big)\big(\frac{2n\beta-2-n}{2n-2n\beta+2}+1\big)\Big) \notag \\
=&0
\end{align}
and 
\begin{align}\label{fixp-3}
&\frac{2(n-2)(m+1)(\beta-1)}{2m+2-n}-\Big((m+1)(2-2\beta)+\frac{2m}{n}\Big)  \notag \\
= &(m+1)\frac{4m(\beta-1)}{2m+2-n}-\frac{2m}{n} \notag \\
=&2m\frac{\big(m(2n\beta-2n-2)-(2+n-2n\beta)\big)}{n(2m+2-n)} \notag \\
<&2m\frac{\Big(\frac{2n\beta-2-n}{2n-2n\beta+2}(2n\beta-2n-2)-(2+n-2n\beta)\Big)}{n(2m+2-n)} \notag \\
=&0.
\end{align}
Since $\beta<\frac{n+2}{2n}$, we deduce that
\begin{align}\label{fixp-2}
(m+1-\frac{2}{n})-\Big((m+1)(2-2\beta)+\frac{2m}{n}\Big)
=&(m+1)\Big(2\beta-1-\frac{2}{n}\Big) \notag \\
<&(m+1)\Big(\frac{n+2}{n}-1-\frac{2}{n}\Big)\notag \\
=&0.
\end{align}
According to $\beta<\frac{n+2}{2n}$ and $m>\frac{2n\beta-n}{2n-2n\beta+2}$, we have
\begin{align}\label{fixp-4}
(m+1)(2-2\beta)+\frac{2m}{n}-1=m\Big(2-2\beta+\frac{2}{n}\Big)+1-2\beta>0.
\end{align}
Combining (\ref{fixp-1})-(\ref{fixp-4}), (\ref{p}) is justified.

Our choice $p>m+1-\frac{2}{n}>(1-\frac{2}{n})m+1-\frac{2}{n}$ entails that
\begin{align}\label{theta1<1}
\Big(\frac{p}{2(m+1)}-\frac{p}{2}\Big)-\Big(\frac{1}{2}-\frac{1}{n}-\frac{p}{2}\Big)
>\frac{(1-\frac{2}{n})(m+1)}{2(m+1)}-\Big(\frac{1}{2}-\frac{1}{n}\Big)
=0.
\end{align}
Due to $p>1$, we deduce that
\begin{align}\label{<0}
\frac{1}{2}-\frac{1}{n}-\frac{p}{2}<-\frac{1}{n}<0.
\end{align}
According to $p>\frac{m}{m+1}+2-2\beta$, we infer that
\begin{align*}
p-2+2\beta>\frac{m}{m+1}>0
\end{align*}
and
\begin{align}\label{theta2>0}
\frac{pm}{2(m+1)(p-2+2\beta)}-\frac{p}{2}
=&\frac{p}{2}\left(\frac{m-(m+1)(2\beta-2)-p(m+1)}{(m+1)(p-2+2\beta)}\right) \notag \\
<&\frac{p}{2}\left(\frac{m-(m+1)(2\beta-2)-(\frac{m}{m+1}+2-2\beta)(m+1)}{(m+1)(p-2+2\beta)}\right)\notag \\
=&0.
\end{align}
Using $p>\frac{2(n-2)(m+1)(\beta-1)}{2m+2-n}$, we deduce that
\begin{align}\label{theta2<1}
&\Big(\frac{pm}{2(m+1)(p-2+2\beta)}-\frac{p}{2}\Big)-\Big(\frac{1}{2}-\frac{1}{n}-\frac{p}{2}\Big) \notag \\
=&\frac{p(\frac{2}{n}(m+1)-1)-(m+1)(2\beta-2)(1-\frac{2}{n})}{2(m+1)(p-2+2\beta)} \notag \\
>&0.
\end{align}

Multiplying the first equation in (\ref{1.0.0}) by $(1+u)^{p-1}$($p>1$) and integrating by parts, using Young's inequality, we obtain
\begin{align}\label{(1+u)p}
\frac{1}{p}\frac{\mathrm{d}}{\mathrm{d}t} \int_{\Omega}(1+u)^p \mathrm{d}x 
 = & \int_{\Omega} (1+u)^{p-1} (\Delta u+\nabla \cdot (S(u) \nabla v)) \mathrm{d}x \notag \\
 = & -(p-1)\int_{\Omega} (1+u)^{p-2} |\nabla u|^2 \mathrm{d}x
     -(p-1)\int_{\Omega} (1+u)^{p-2} S(u) \nabla u \cdot \nabla v \mathrm{d}x \notag \\
 \leqslant & -\frac{p-1}{2} \int_{\Omega} (1+u)^{p-2} |\nabla u|^2 \mathrm{d}x
             + \frac{p-1}{2} \int_{\Omega} (1+u)^{p-2} S^2(u) |\nabla v|^2 \mathrm{d}x \notag \\
 \leqslant & -\frac{p-1}{2} \int_{\Omega} (1+u)^{p-2} |\nabla u|^2 \mathrm{d}x
             + \frac{(p-1)K^2}{2} \int_{\Omega} (1+u)^{p-2+2\beta} |\nabla v|^2 \mathrm{d}x 
\end{align}
for all $ t \in (0,T_{\max})$. 

For any $m>1$, it follows from the second equation in (\ref{1.0.0}) that
\begin{align}\label{v2k}
\frac{\tau}{2m} \frac{\mathrm{d}}{\mathrm{d}t} \int_{\Omega} |\nabla v|^{2m} \mathrm{d}x 
   = &\int_{\Omega}  |\nabla v|^{2(m-1)} \nabla v \cdot \nabla (\Delta v-uv) \mathrm{d}x  \notag \\
   = &\int_{\Omega}  |\nabla v|^{2(m-1)} \nabla v \cdot \nabla \Delta v \mathrm{d}x 
      - \int_{\Omega}  |\nabla v|^{2(m-1)} \nabla v \cdot \nabla (uv) \mathrm{d}x
\end{align}
for all $ t \in (0,T_{\max})$. For the first term in the right side of (\ref{v2k}), we using the fact that $\nabla v \cdot \nabla \Delta v= \frac{1}{2} \Delta |\nabla v|^2-|D^2v|^2$ to obtain
\begin{align}\label{v2-4alpha-1}
  \int_{\Omega}  |\nabla v|^{2(m-1)} \nabla v \cdot \nabla \Delta v \mathrm{d}x 
     =& \frac{1}{2} \int_{\Omega}  |\nabla v|^{2(m-1)} \Delta |\nabla v|^2 \mathrm{d}x 
       - \int_{\Omega}  |\nabla v|^{2(m-1)} |D^2v|^2 \mathrm{d}x  \notag \\
     =&-\frac{m-1}{2} \int_{\Omega} |\nabla v|^{2(m-2)} |\nabla |\nabla v|^2|^2 \mathrm{d}x \notag \\
      &+ \frac{1}{2} \int_{\partial \Omega} |\nabla v|^{2(m-1)} \frac{\partial |\nabla v|^2}{\partial n} \mathrm{d}x 
      -\int_{\Omega} |\nabla v|^{2(m-1)} |D^2 v|^2 \mathrm{d}x .
\end{align}
For the second term in the right side of (\ref{v2k}), integrating by parts, we get
\begin{align}\label{v2-4alpha-2}
&\int_{\Omega} |\nabla v|^{2(m-1)} \nabla v \cdot \nabla (uv) \mathrm{d}x \notag \\
  =& -\int_{\Omega} |\nabla v|^{2(m-1)} uv \Delta v \mathrm{d}x  
    -(m-1) \int_{\Omega} |\nabla v|^{2(m-2)} uv \nabla |\nabla v|^2 \cdot \nabla v \mathrm{d}x  \notag \\
    &+ \int_{\partial \Omega} |\nabla v|^{2(m-1)} uv \frac{\partial v}{\partial n} \mathrm{d}x.
\end{align}
 By means of Young's inequality, we find 
\begin{align}\label{v2-4alpha-2-1}
\int_{\Omega} |\nabla v|^{2(m-1)} uv \Delta v \mathrm{d}x
  \leqslant & \frac{1}{2n} \int_{\Omega} |\nabla v|^{2(m-1)} |\Delta v|^2 \mathrm{d}x 
            + \frac{n}{2}\int_{\Omega} |\nabla v|^{2(m-1)} u^2 v^2 \mathrm{d}x \notag \\
  \leqslant & \frac{1}{2} \int_{\Omega} |\nabla v|^{2(m-1)} |D^2 v|^2 \mathrm{d}x 
            + \frac{n}{2}\int_{\Omega} |\nabla v|^{2(m-1)} u^2 v^2 \mathrm{d}x 
\end{align}
and
\begin{align}\label{v2-4alpha-2-2}
& (m-1)\int_{\Omega} |\nabla v|^{2(m-2)} uv \nabla |\nabla v|^2 \cdot \nabla v \mathrm{d}x \notag \\
\leqslant & \frac{m-1}{4}\int_{\Omega} |\nabla v|^{2(m-2)} |\nabla |\nabla v|^2|^2 \mathrm{d}x
            +(m-1)  \int_{\Omega} |\nabla v|^{2(m-1)} u^2 v^2 \mathrm{d}x .
\end{align}
Substituting (\ref{v2-4alpha-1})-(\ref{v2-4alpha-2-2}) into (\ref{v2k}) yields 
\begin{align*}
&\frac{\tau}{2m} \frac{\mathrm{d}}{\mathrm{d}t}\int_{\Omega} |\nabla v|^{2m} \mathrm{d}x 
+\frac{m-1}{4} \int_{\Omega} |\nabla v|^{2(m-2)} |\nabla |\nabla v|^2|^2 \mathrm{d}x
+\frac{1}{2}\int_{\Omega} |\nabla v|^{2(m-1)} |D^2 v|^2 \mathrm{d}x  \notag \\
  \leqslant &  \frac{1}{2} \int_{\partial \Omega} |\nabla v|^{2(m-1)} \frac{\partial |\nabla v|^2}{\partial n} \mathrm{d}x 
             - \int_{\partial \Omega} |\nabla v|^{2(m-1)} uv \frac{\partial v}{\partial n} \mathrm{d}x
             +(m-1+\frac{n}{2}) \int_{\Omega} |\nabla v|^{2(m-1)} u^2 v^2 \mathrm{d}x
\end{align*}
for all $t \in (0,T_{\max})$. Combining this with (\ref{(1+u)p}), applying the fact $|\nabla|\nabla v|^2|^2=4|\nabla v|^2|D^2 v|^2$, and using (\ref{1.0.7}) and (\ref{vmax}), we deduce that
\begin{align}\label{all-1}
&\frac{\mathrm{d}}{\mathrm{d}t}\left(\frac{\tau}{2m}\int_{\Omega} |\nabla v|^{2m} \mathrm{d}x 
   + \frac{1}{p} \int_{\Omega}(1+u)^p \mathrm{d}x \right) \notag \\
  & +\frac{p-1}{2}\int_{\Omega} (1+u)^{p-2}|\nabla u|^2 \mathrm{d}x 
   +(m-\frac{1}{2})\int_{\Omega} |\nabla v|^{2(m-1)} |D^2 v|^2 \mathrm{d}x  \notag \\
\leqslant &  \frac{1}{2} \int_{\partial \Omega} |\nabla v|^{2(m-1)} \frac{\partial |\nabla v|^2}{\partial n} \mathrm{d}x 
             + \int_{\partial \Omega} |\nabla v|^{2(m-1)} uv \left|\frac{\partial v}{\partial n}\right| \mathrm{d}x  \notag \\
            & +\frac{(p-1)K^2}{2} \int_{\Omega} (1+u)^{p-2+2\beta} |\nabla v|^{2}  \mathrm{d}x
              +\Big(m-1+\frac{n}{2}\Big)\|v_0\|_{L^{\infty}(\Omega)}^2 \int_{\Omega} u^2 |\nabla v|^{2(m-1)} \mathrm{d}x
\end{align}
for all $t \in (0,T_{\max})$. 

By means of Hölder's inequality, we have
\begin{align}\label{u2 nabla v2-4alpha}
  \int_{\Omega} u^2 |\nabla v|^{2(m-1)} \mathrm{d}x
    \leqslant \left(\int_{\Omega}|\nabla v|^{2(m+1)}\mathrm{d}x\right)^{\frac{m-1}{m+1}}
              \left(\int_{\Omega}(1+u)^{m+1}\mathrm{d}x\right)^{\frac{2}{m+1}}.
\end{align}
Applying Lemma~\ref{p-q} with $q=m$ and $p=2m+2$, we have
\begin{align}\label{v-lemma p-q}
 \left(\int_{\Omega}|\nabla v|^{2m+2}\mathrm{d}x\right)^{\frac{m-1}{m+1}} 
  = \big\|\nabla v\big\|_{L^{2m+2}(\Omega)}^{2(m-1)}
  \leqslant C_{23} \left\||\nabla v|^{m-1} | D^2v| \right\|_{L^2(\Omega)}^{\frac{2(m-1)}{m+1}}+C_{23}.
\end{align}
By Gagliardo-Nirenberg inequality, there exists a constant $C_{27}=C_{27}(p,m)>0$ such that
\begin{align}\label{u3-2alpha-G-N}
  \left(\int_{\Omega}(1+u)^{m+1}\mathrm{d}x\right)^{\frac{2}{m+1}}
  =&\|(1+u)^{\frac{p}{2}}\|_{L^{\frac{2(m+1)}{p}}(\Omega)}^{\frac{4}{p}}   \notag \\
  \leqslant & C_{27}\|\nabla (1+u)^{\frac{p}{2}}\|_{L^2(\Omega)}^{\frac{4\theta_1}{p}} 
              \|(1+u)^{\frac{p}{2}}\|_{L^{\frac{2}{p}}(\Omega)}^{\frac{4(1-\theta_1)}{p}} \notag \\
             & +C_{27}\|(1+u)^{\frac{p}{2}}\|_{L^{\frac{2}{p}}(\Omega)}^{\frac{4}{p}},
\end{align}
where $\theta_1= \frac{\frac{p}{2(m+1)}-\frac{p}{2}}{\frac{1}{2}-\frac{1}{n}-\frac{p}{2}} \in (0,1)$ due to (\ref{theta1<1}) and (\ref{<0}). Thanks to $p>m+1-\frac{2}{n}$ and (\ref{<0}), we have
\begin{align}\label{<1-1}
\frac{2\theta_1}{p}+\frac{m-1}{m+1} -1
=&\frac{2}{p} \cdot \frac{\frac{p}{2(m+1)}-\frac{p}{2}}{\frac{1}{2}-\frac{1}{n}-\frac{p}{2}}+\frac{m-1}{m+1}-1 \notag \\
=&\frac{-\frac{m}{m+1}+\frac{m-1}{m+1}(\frac{1}{2}-\frac{1}{n}-\frac{p}{2})}{\frac{1}{2}-\frac{1}{n}-\frac{p}{2}}-1 \notag \\
<&\frac{-\frac{m}{m+1}+\frac{m-1}{m+1}(\frac{1}{2}-\frac{1}{n}-(\frac{m}{2}+\frac{1}{2}-\frac{1}{n}))}{\frac{1}{2}-\frac{1}{n}-\frac{p}{2}}-1 \notag \\
=&\frac{-\frac{m}{2}}{\frac{1}{2}-\frac{1}{n}-\frac{p}{2}}-1 \notag \\
=&\frac{\frac{p}{2}-\frac{1}{2}+\frac{1}{n}-\frac{m}{2}}{\frac{1}{2}-\frac{1}{n}-\frac{p}{2}} \notag \\
<&0.
\end{align}
Inserting (\ref{v-lemma p-q}) and (\ref{u3-2alpha-G-N}) into (\ref{u2 nabla v2-4alpha}), due to (\ref{<1-1}), we can apply Young's inequality to deduce that, for any $\varepsilon_1>0$, there exists a positive constant $C_{28}=C_{28}(\beta,p,R,m,\varepsilon_1)$ such that 
\begin{align}\label{nablav2k-2 u2}
\int_{\Omega} u^2 |\nabla v|^{2(m-1)}  \mathrm{d}x 
\leqslant \varepsilon_1  \left\||\nabla v|^{m-1} | D^2v| \right\|_{L^2(\Omega)}^2
          + \varepsilon_1 \|\nabla (1+u)^{\frac{p}{2}}\|_{L^2(\Omega)}^2 +C_{28}.
\end{align}

For the last but one term in (\ref{all-1}), using Hölder's inequality, we obtain
\begin{align}\label{complex}
\int_{\Omega} (1+u)^{p-2+2\beta} |\nabla v|^{2}  \mathrm{d}x
\leqslant \left(\int_{\Omega} |\nabla v|^{2m+2} \mathrm{d}x \right)^{\frac{1}{m+1}} 
           \left(\int_{\Omega} (1+u)^{\frac{(m+1)(p-2+2\beta)}{m}} \mathrm{d}x \right)^{\frac{m}{m+1}}.
\end{align}
By Gagliardo-Nirenberg inequality, we infer the existence of a constant $C_{29}=C_{29}(p,\beta,m)>0$ such that
\begin{align}\label{u-complex}
\left(\int_{\Omega} (1+u)^{\frac{(m+1)(p-2+2\beta)}{m}} \mathrm{d}x \right)^{\frac{m}{m+1}}
=&\|(1+u)^\frac{p}{2}\|_{L^{\frac{2(m+1)(p+2\beta-2)}{pm}}(\Omega)}^{\frac{2(p+2\beta-2)}{p}} \notag \\
\leqslant & C_{29}\| \nabla (1+u)^{\frac{p}{2}}\|_{L^2(\Omega)}^{\frac{2\theta_2(p+2\beta-2)}{p}}
           \|(1+u)^{\frac{p}{2}}\|_{L^{\frac{2}{p}}(\Omega)}^{\frac{2(1-\theta_2)(p+2\beta-2)}{p}} \notag \\
           &+ C_{29} \|(1+u)^{\frac{p}{2}}\|_{L^{\frac{2}{p}}(\Omega)}^{\frac{2(p+2\beta-2)}{p}},
\end{align}
where $\theta_2=\frac{\frac{pm}{2(m+1)(p-2+2\beta)}-\frac{p}{2}}{\frac{1}{2}-\frac{1}{n}-\frac{p}{2}} \in (0,1)$ by (\ref{<0}), (\ref{theta2>0}) and (\ref{theta2<1}). Similar to (\ref{v-lemma p-q}), we find 
\begin{align}\label{nablav2k+2}
 \left(\int_{\Omega}|\nabla v|^{2m+2}\mathrm{d}x\right)^{\frac{1}{m+1}} 
 = \big\|\nabla v\big\|_{L^{2m+2}(\Omega)}^{2}
  \leqslant C_{23} \left\||\nabla v|^{m-1} | D^2v| \right\|_{L^2(\Omega)}^{\frac{2}{m+1}}+C_{23}.
\end{align}
Because of $p<(2-2\beta)(m+1)+\frac{2m}{n}$ and (\ref{<0}), we have
\begin{align}\label{<1}
&\frac{p-2+2\beta}{p}\theta_2+\frac{1}{m+1}-1 \notag \\
=&\frac{\frac{m}{2(m+1)}-\frac{p-2+2\beta}{2}}{\frac{1}{2}-\frac{1}{n}-\frac{p}{2}}+\frac{1}{m+1}-1 \notag \\
=&\frac{1-\beta+\frac{m}{n(m+1)}-\frac{p}{2(m+1)}}{\frac{1}{2}-\frac{1}{n}-\frac{p}{2}} \notag \\
<&0.
\end{align}
Inserting (\ref{u-complex}) and (\ref{nablav2k+2}) into (\ref{complex}), due to (\ref{<1}), we can apply Young's inequality to deduce that, for any $\varepsilon_2>0$, there exists a positive constant $C_{30}=C_{30}(\beta,R,m,p,\varepsilon_2)>0$ such that 
\begin{align}\label{nablav2 up-2+2beta}
\int_{\Omega} (1+u)^{p-2+2\beta} |\nabla v|^{2}  \mathrm{d}x
\leqslant \varepsilon_2  \left\||\nabla v|^{m-1} | D^2v| \right\|_{L^2(\Omega)}^2
          + \varepsilon_2 \|\nabla (1+u)^{\frac{p}{2}}\|_{L^2(\Omega)}^2 +C_{30}.
\end{align}
Substituting (\ref{nablav2k-2 u2}) and (\ref{nablav2 up-2+2beta}) into (\ref{all-1}), and choosing $\varepsilon_1$ and $\varepsilon_2$ sufficiently small, we deduce (\ref{all-2}).
\end{proof}
The following lemma gives the $L^p$ estimate for $u$. Since $v$ satisfies the Dirichlet boundary condition, inspired by \cite{2022-N-LankeitWinkler}, we use the radial symmetry assumption to handle the boundary integrals.
\begin{lem}\label{lem-3.2.3}
Let $\Omega=B_R(0) \subset \mathbb{R}^n$ $(n \geqslant 2)$ and $(u,v)$ be the classical solution of problem $(\ref{1.0.0})$ with $\tau=\{0,1\}$. Suppose that $S(\xi)$ satisfies $(\ref{1.0.3})$ and $(\ref{1.0.5-1})$ with $\beta \in (0,\frac{n+2}{2n})$ and $K>0$. Assume that $(\ref{1.0.2})$ and $(\ref{v 0})$ are valid. If $m$ and $p$ satisfy (\ref{m}) and (\ref{p}) respectively, then there exists a constant $C_{31}=C_{31}(\beta,R,m,p,K)>0$ such that 
\begin{align}\label{u2-all}
\int_{\Omega}u^p \mathrm{d}x + \tau \int_{\Omega}|\nabla v|^{2m} \mathrm{d}x \leqslant C_{31},
\quad t \in (0,T_{\max}).
\end{align}
\end{lem}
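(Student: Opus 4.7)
The argument hinges on absorbing the boundary integrals in (\ref{all-2}) via radial symmetry and then applying the Gronwall-type Lemma~\ref{lem-3.1.0.2}. Set
\begin{align*}
y(t) := \frac{\tau}{2m}\int_{\Omega}|\nabla v|^{2m}\,\mathrm{d}x + \frac{1}{p}\int_{\Omega}(1+u)^p\,\mathrm{d}x
\end{align*}
and let $D(t)$ denote the sum of the two dissipation integrals on the left of (\ref{all-2}), so (\ref{all-2}) reads $y'(t)+D(t)\leq (\text{boundary terms})+C_{26}$.

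First, I would treat the boundary integrals by exploiting radial symmetry together with the Dirichlet condition. Since $v$ is radial with $v\equiv M$ on $\partial\Omega$, one has $|\nabla v|=|v_r|$, $\partial v/\partial n=v_r(R,t)$ and $\partial|\nabla v|^2/\partial n=2v_r(R,t)v_{rr}(R,t)$; because $v\leq M$ in $\bar\Omega$ with $v=M$ on $\partial\Omega$, Hopf's lemma forces $v_r(R,t)\geq 0$. In the $\tau=1$ case, $v|_{\partial\Omega}=M$ is time-independent so $v_t(R,t)=0$, and the second equation of (\ref{1.0.0}) gives $\Delta v(R,t)=M u(R,t)$ in both $\tau\in\{0,1\}$ cases. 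Writing $\Delta v= v_{rr}+\tfrac{n-1}{r}v_r$, this yields the key identity
\begin{align*}
v_{rr}(R,t)= M u(R,t)-\tfrac{n-1}{R}v_r(R,t).
\end{align*}
Substituting this into the first boundary integral and discarding the resulting nonpositive $-\tfrac{n-1}{R}v_r^{2m}$ term, and using $|\partial v/\partial n|=v_r$ in the second, both boundary integrals are majorised by a constant multiple of $v_r(R,t)^{2m-1}u(R,t)$. Lemma~\ref{lem-3.1.0} (when $\tau=0$) or Lemma~\ref{vr-all} (when $\tau=1$) makes $v_r(R,t)$ uniformly bounded, so the boundary contribution is controlled by $\tilde C\, u(R,t)+C_{26}$.

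Next, I would show that $D(t)$ controls a positive multiple of $y(t)$ up to a constant. For the $u$-part, apply Gagliardo--Nirenberg to $f=(1+u)^{p/2}$ with the mass-conservation bound $\int_\Omega(1+u)=|\Omega|+\|u_0\|_{L^1(\Omega)}$ and Young's inequality to obtain $\int_\Omega(1+u)^p\leq \varepsilon\|\nabla(1+u)^{p/2}\|_{L^2}^2+C_\varepsilon=\varepsilon\tfrac{p^2}{4}\int_\Omega(1+u)^{p-2}|\nabla u|^2+C_\varepsilon$. For the $v$-part (needed only when $\tau=1$), I would combine the pointwise inequality $|\nabla|\nabla v|^m|^2\leq m^2|\nabla v|^{2(m-1)}|D^2 v|^2$ with Lemma~\ref{p-q} taken at $p=2m+2$, $q=m$ (whose hypotheses are guaranteed by (\ref{m})), together with H\"older's inequality $\int_\Omega|\nabla v|^{2m}\leq|\Omega|^{1/(m+1)}\|\nabla v\|_{L^{2m+2}}^{2m}$ and Young's inequality (noting $2m/(m+1)<2$), to conclude $\tau\int_\Omega|\nabla v|^{2m}\leq \varepsilon\int_\Omega|\nabla v|^{2(m-1)}|D^2 v|^2+C_\varepsilon$.

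Combining the two previous steps and selecting $\varepsilon$ small enough that the dissipation absorbs the $\varepsilon$-fractions, one arrives at a differential inequality of the form
\begin{align*}
y'(t)+a\,y(t)\leq \tilde C\,u(R,t)+\tilde C_{26},\qquad t\in(0,T_{\max}),
\end{align*}
for suitable constants $a,\tilde C,\tilde C_{26}>0$. Since Lemma~\ref{uR-time-all} provides $\int_t^{t+h}u(R,s)\,\mathrm{d}s\leq C_8$ uniformly in $t\in(0,T_{\max}-h)$ with $h=\min\{1,\tfrac12 T_{\max}\}$, the right-hand side has uniformly bounded running $h$-averages; Lemma~\ref{lem-3.1.0.2} then yields $y(t)\leq y(0)+C$ for all $t\in[0,T_{\max})$, which is (\ref{u2-all}). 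The main obstacle is the boundary step: converting the Dirichlet condition on $v$ into a tractable estimate hinges on the identity for $v_{rr}(R,t)$ extracted from the equation and the sign $v_r(R,t)\geq 0$, both of which depend on radial symmetry and on the a priori bound $v\leq M$ from Propositions~\ref{0.0}--\ref{0.0-1}.
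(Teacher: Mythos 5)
Your proposal is correct and follows essentially the same route as the paper's proof: the same functional $y(t)$, the same boundary identity $v_{rr}(R,t)=u v-\tfrac{n-1}{R}v_r$ read off from the equation at $r=R$ to majorise the boundary integrals by a multiple of $u(R,t)\,|v_r(R,t)|^{2m-1}$, the same absorption of $\int_\Omega(1+u)^p$ and $\tau\int_\Omega|\nabla v|^{2m}$ into the dissipation (the paper uses Gagliardo--Nirenberg with the $L^{\sigma}$ bound (\ref{nablavs}) where you invoke Lemma~\ref{p-q} with $p=2m+2$, $q=m$ --- an inessential variation), and the same conclusion via Lemma~\ref{uR-time-all} and the Gronwall-type Lemma~\ref{lem-3.1.0.2}. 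One small remark: the appeal to Hopf's lemma for $v_r(R,t)\geqslant 0$ is both unnecessary and, for $\tau=1$, not justified by the paper's estimates (only $\|v\|_{L^\infty}\leqslant\|v_0\|_{L^\infty}$ is available, not $v\leqslant M$); it is also not needed, since the discarded term $-\tfrac{2(n-1)}{R}v_r^2$ is nonpositive regardless of sign and the remaining term is bounded by $2Mu(R,t)|v_r(R,t)|^{2m-1}$ in either case.
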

\begin{proof}
For $\tau=1$, we use (\ref{nablavs}), and apply Gagliardo-Nirenberg inequality and Young's inequality to find a constant $C_{32}=C_{32}(m)>0$ such that
\begin{align}\label{nablav4-4alpha} 
 \int_{\Omega} |\nabla v|^{2m}\mathrm{d}x 
   = & \big\| |\nabla v|^{m}\big\|_{L^2(\Omega)}^2 \notag \\
   \leqslant & C_{32} \big\|\nabla |\nabla v|^{m}\big\|_{L^2(\Omega)}^{2\theta_4}
               \big\| |\nabla v|^{m}\big\|_{L^{\frac{{\sigma}}{m}}(\Omega)}^{2(1-\theta_4)}
               + C_{32} \big\| |\nabla v|^{m}\big\|_{L^{\frac{{\sigma}}{m}}(\Omega)}^2 \notag \\
   \leqslant & C_{32}C_3^{2m(1-\theta_4)} \big\|\nabla |\nabla v|^{m}\big\|_{L^2(\Omega)}^{2\theta_4} +C_{32}C_3^{2m} \notag \\
   \leqslant & m^{2\theta_4}C_{32}C_3^{2m(1-\theta_4)} \left\||\nabla v|^{m-1} |D^2v| \right\|_{L^2(\Omega)}^{2\theta_4} +C_{32}C_3^{2m},  
\end{align}
where $\theta_4=\frac{2mn-n{\sigma}}{2{\sigma}-n{\sigma}+2mn} \in (0,1)$ by $m>1$ and ${\sigma}>1$. 
For $\tau=\{0,1\}$, using Gagliardo-Nirenberg inequality, there exists a constant $C_{33}=C_{33}(p)>0$ such that
\begin{align}\label{u2-G-N}
\int_{\Omega}(1+u)^p \mathrm{d}x 
  \leqslant & C_{33}\| \nabla (1+u)^{\frac{p}{2}}\|_{L^2(\Omega)}^{2\theta_3}   
              \|(1+u)^{\frac{p}{2}}\|_{L^{\frac{2}{p}}(\Omega)} ^{2(1-\theta_3)} 
              + C_{33}\|u(1+u)^{\frac{p}{2}}\|_{L^{\frac{2}{p}}(\Omega)}^2 ,
\end{align}
where $\theta_3=\frac{\frac{1}{2}-\frac{p}{2}}{\frac{1}{2}-\frac{1}{n}-\frac{p}{2}} \in (0,1)$ due to (\ref{<0}) and $p>1$. 

For $\tau=\{0,1\}$, using the second equation in (\ref{1.0.0}), since $v=M$ on $\partial \Omega$, we deduce that
\begin{align*}
  \frac{\partial\left|\nabla v\right|^2}{\partial \nu}
= &2 v_r v_{ r r} \notag \\
= &2 v_{ r} \cdot\left\{v_{ r r}+\frac{1}{R} v_{r}\right\}-\frac{2}{R} v_{r}^2 \notag \\
= &2 u v v_{ r}-\frac{2}{R } v_{ r}^2 \notag \\
\leqslant & 2 u v v_{ r} .
\end{align*}
Combining this with (\ref{uR}), (\ref{vr-0}), (\ref{vr}) and $m>1$, for $\tau=\{0,1\}$, we infer that
\begin{align}\label{partial nablav}
\int_t^{t+h}\int_{\partial \Omega} |\nabla v|^{2(m-1)} \frac{\partial |\nabla v|^2}{\partial n} \mathrm{d}x \mathrm{d}s
  \leqslant & 2M\int_t^{t+h} u(R,s) v_r^{2m-1}(R,s) \mathrm{d}x \mathrm{d}s \notag \\
  \leqslant & 2M\max\{C_2,C_{16}\}^{2m-1} C_8,
  \quad t \in [0,T_{\max}-h) 
\end{align}
and
\begin{align}\label{partial uv}
\int_t^{t+h}\int_{\partial \Omega} |\nabla v|^{2(m-1)} uv \left|\frac{\partial v}{\partial n}\right| \mathrm{d}x \mathrm{d}s 
\leqslant M\max\{C_2,C_{16}\}^{2m-1} C_8,
\quad t \in [0,T_{\max}-h).
\end{align}
In view of (\ref{nablav4-4alpha})-(\ref{partial uv}) and (\ref{all-2}), along with Young's inequality, for $\tau=\{0,1\}$, we deduce that
\begin{align}\label{all-3}
 & \frac{\mathrm{d}}{\mathrm{d} t} \left(\frac{\tau}{2m}\int_{\Omega} |\nabla v|^{2m} \mathrm{d}x 
   + \frac{1}{p} \int_{\Omega}(1+u)^p \mathrm{d}x \right)  
   + \frac{\tau}{2m} \int_{\Omega } |\nabla v|^{2m}\mathrm{d}x
   +\frac{1}{p} \int_{\Omega}(1+u)^p \mathrm{d}x    \notag \\
 \leqslant & \frac{1}{2} \int_{\partial \Omega} |\nabla v|^{2(m-1)} \frac{\partial |\nabla v|^2}{\partial n} \mathrm{d}x 
             + \int_{\partial \Omega} |\nabla v|^{2(m-1)} uv \left|\frac{\partial v}{\partial n}\right| \mathrm{d}x  +C_{34}
\end{align}
for all $t \in (0,T_{\max})$. We define 
\begin{align*}
y(t):=\frac{\tau}{2m}\int_{\Omega} |\nabla v|^{2m} \mathrm{d}x 
   + \frac{1}{p} \int_{\Omega}(1+u)^p \mathrm{d}x 
\end{align*}
and 
\begin{align*}
g(t):=  \frac{1}{2} \int_{\partial \Omega} |\nabla v|^{2(m-1)} \frac{\partial |\nabla v|^2}{\partial n} \mathrm{d}x 
        + \int_{\partial \Omega} |\nabla v|^{2(m-1)} uv \left|\frac{\partial v}{\partial n}\right| \mathrm{d}x  +C_{34}.
\end{align*}
Since (\ref{partial nablav}) and (\ref{partial uv}) ensures that $\int_t^{t+h}  g(s) \mathrm{d}s \leqslant C_{35}(\beta,R,K,m,p) $ for all $t \in (0,T_{\max}-h)$ with $h=\min \left\{1, \frac{1}{2} T_{\max }\right\}$. It follows from (\ref{all-3}) that
\begin{align*}
y^{\prime}(t)+ y(t) \leqslant g(t), \quad t \in (0,T_{\max}),
\end{align*}
which implies (\ref{u2-all}) by means of Lemma~\ref{lem-3.1.0.2}.
\end{proof}
Applying the standard Moser-type iterative argument, we finally obtain the $L^{\infty}$ estimate of $u$.\\
\emph{\textbf{Proof of Theorem \ref{0.3}.}} Since $(m+1)(2-2\beta)+\frac{2m}{n} \rightarrow \infty$ as $m\rightarrow \infty$, it follows from Lemma~\ref{lem-3.2.3} that, for any $p>1$ and $m>1$, we have
\begin{align}\label{up0}
\sup _{0<t<T_{\max }}\|u(\cdot, t)\|_{L^{p}(\Omega)} +\tau \|\nabla v(\cdot, t)\|_{L^{2m}(\Omega)}\leqslant C_{31}.
\end{align}
For the case of $\tau=0$, by the standard elliptic regularity theory and (\ref{up0}) with $p>n$, we have 
\begin{align}\label{nablav-0}
\left\|\nabla v(\cdot, t)\right\|_{L^{\infty}(\Omega)} \leqslant C_{36} .
\end{align}
For the case of $\tau=1$, by (\ref{up0}) with $p>n$, using the standard Dirichlet heat semigroup estimate, we find a constant $C_{36}>0$ such that
\begin{align}\label{nablav-1}
\left\|\nabla v(\cdot, t)\right\|_{L^{\infty}(\Omega)} 
= &\|\nabla\left(v(\cdot, t)-M\right)\|_{L^{\infty}(\Omega)} \notag \\
= &\left\|\nabla e^{t \Delta}\left(v_0-M\right)-\int_0^t \nabla e^{(t-s) \Delta}\big(u(\cdot, s) v(\cdot, s)\big) \mathrm{d} s\right\|_{L^{\infty}(\Omega)} \notag \\
\leqslant & C_5 \left\|v_0-M\right\|_{W^{1, \infty}(\Omega)} \notag \\
           &+C_6 \|v_0\|_{L^{\infty}(\Omega)} \|u\|_{L^{p}(\Omega)} \int_0^{\infty}\left(1+s^{-\frac{1}{2}-\frac{n}{2p}}\right) e^{-\lambda s} \mathrm{d} s \notag \\
\leqslant & C_{36}.
\quad t \in (0,T_{\max}).
\end{align} 
Thus, using (\ref{nablav-1}) or (\ref{nablav-0}), we deduce Theorem \ref{0.3} for the case of $\tau=0$ or $\tau=1$ by a Moser-type iterative argument (cf.  \cite[Lemma A.1]{2012-JDE-TaoWinkler}).
\hfill$\Box$\\
\vskip 3mm
\section{Blow-up in 2-D system when $\tau=0$ and $\beta >1$}\label{section 1}
In this section, we assume that $\Omega =B_R(0) \subset \mathbb{R}^2$ and $\tau=0$. The purpose of this section is to prove Theorem~\ref{0.1}. Now we introduce a mass distribution function, and derive some properties.
\begin{lem}\label{lem-2.1.2}
Assume that $(\ref{1.0.3})$ and $(\ref{1.0.2})$ hold. We define
\begin{align}\label{lem-2.1.2.1}
  w(s, t):=\int_0^{\sqrt{s}}\rho u \left(\rho, t \right) \mathrm{d}\rho, 
  \quad  (s,t) \in\left[0, R^2\right] \times \left[0, T_{\max }\right),
\end{align}
then $w \in C\left(\left[0, T_{\max }\right) ; C^1\left(\left[0, R^2\right]\right) \cap C^{2,1}\left(\left(0, R^2\right] \times
\left(0, T_{\max }\right)\right)\right.$ satisfies the following Dirichlet problem
\begin{align}\label{lem-2.1.2.3}
  \left\{
  \begin{array}{ll}
    w_t(s, t)=4 s w_{s s}(s, t)+ \sqrt{s} S\bigl(2w_s(s, t)\bigl) v_r\big(\sqrt{s}, t\big), 
    & s \in\left(0, R^2\right),\ t \in\left(0, T_{\max }\right), \\
    w(0, t)=0, \quad w\left(R^2, t\right)=\frac{{m_0}}{2\pi}, 
    & t \in\left[0, T_{\max }\right),\\
    w(s, 0)=w_0(s), 
    & s \in\left(0, R^2\right),
  \end{array}
  \right.
\end{align}
where ${m_0}:=\int_{\Omega} u_0 \mathrm{d} x$ and $w_0(s)=\int_0^{\sqrt{s}} \rho u_0(\rho) \mathrm{d} \rho$.

Moreover, $w$ satisfies 
\begin{align}\label{lem-2.1.2.2}
  w_s(s, t)=\frac{1}{2} u\big(\sqrt{s}, t\big), 
  \quad (s,t) \in\left[0, R^2\right] \times \left[0, T_{\max }\right).
\end{align}
\end{lem}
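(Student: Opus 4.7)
The plan is to verify the claimed identity for $w_s$ first, and then derive the PDE for $w$ by rewriting the equation for $u$ in radial coordinates and integrating.

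First I would compute $w_s$ directly from the definition (\ref{lem-2.1.2.1}) by the fundamental theorem of calculus: differentiating under the integral and using the chain rule with $\sqrt{s}$ gives
\begin{align*}
w_s(s,t) = \sqrt{s}\, u(\sqrt{s},t) \cdot \frac{1}{2\sqrt{s}} = \frac{1}{2} u(\sqrt{s},t),
\end{align*}
which immediately yields (\ref{lem-2.1.2.2}). Differentiating once more produces $w_{ss}(s,t) = \frac{1}{4\sqrt{s}} u_\rho(\sqrt{s},t)$, so that
\begin{align*}
4 s w_{ss}(s,t) = \sqrt{s}\, u_\rho(\sqrt{s},t).
\end{align*}

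Next I would derive the PDE by differentiating $w$ in $t$ and inserting the first equation of (\ref{1.0.0}). Since $(u,v)$ is radially symmetric and $n=2$, that equation reads $r u_t = (r u_r)_r + (r S(u) v_r)_r$, so
\begin{align*}
w_t(s,t) = \int_0^{\sqrt{s}} \rho\, u_t(\rho,t)\, \mathrm{d}\rho = \int_0^{\sqrt{s}} \bigl[(\rho u_\rho)_\rho + (\rho S(u) v_\rho)_\rho\bigr]\,\mathrm{d}\rho.
\end{align*}
The fundamental theorem of calculus, together with the fact that both $\rho u_\rho$ and $\rho S(u) v_\rho$ vanish at $\rho=0$ (by smoothness and radial symmetry), gives
\begin{align*}
w_t(s,t) = \sqrt{s}\, u_\rho(\sqrt{s},t) + \sqrt{s}\, S\bigl(u(\sqrt{s},t)\bigr) v_r(\sqrt{s},t).
\end{align*}
Substituting the identities $u(\sqrt{s},t) = 2 w_s(s,t)$ and $\sqrt{s}\, u_\rho(\sqrt{s},t) = 4 s w_{ss}(s,t)$ yields exactly the PDE in (\ref{lem-2.1.2.3}).

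The boundary values are immediate: $w(0,t) = 0$ by the definition, while
\begin{align*}
w(R^2,t) = \int_0^R \rho\, u(\rho,t)\, \mathrm{d}\rho = \frac{1}{2\pi} \int_\Omega u(x,t)\, \mathrm{d}x = \frac{m_0}{2\pi}
\end{align*}
by passing to polar coordinates and invoking mass conservation (\ref{mass-0}). The initial condition is clear from the definition of $w_0$, and the asserted regularity of $w$ follows directly from the regularity of $u$ provided by Proposition~\ref{0.0} combined with the identities for $w_s$ and $w_{ss}$ above.

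I do not expect significant obstacles here; the only mildly delicate point is checking that the boundary terms at $\rho = 0$ in the two integrations really drop out, which requires the radial smoothness of $u$ and $v$ at the origin provided by Proposition~\ref{0.0}. Once that is noted, the lemma is essentially a direct computation combining the radial form of the first equation in (\ref{1.0.0}) with the substitution $r = \sqrt{s}$.
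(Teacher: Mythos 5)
Your computation is correct and is exactly the routine verification the paper has in mind (the paper simply states that \eqref{lem-2.1.2.3} "can be easily verified using the radial symmetric form of \eqref{1.0.0}" and omits the details). All the steps — the chain-rule identities $w_s=\tfrac12 u(\sqrt{s},t)$ and $4sw_{ss}=\sqrt{s}\,u_\rho(\sqrt{s},t)$, integrating the radial form $ru_t=(ru_r)_r+(rS(u)v_r)_r$, the vanishing of the boundary terms at $\rho=0$, and mass conservation \eqref{mass-0} for the value at $s=R^2$ — check out.
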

\begin{proof}
(\ref{lem-2.1.2.3}) can be easily verified using the radial symmetric form of (\ref{1.0.0}), we omit the details.
\end{proof}
We give a lower bound for $r v_r(r,t)$ by employing an ODE comparison argument, and its detailed proof can be found in \cite[Lemma 3.1]{2023-PRSESA-WangWinkler}. 
\begin{lem}\label{lem2.2.1}
Assume that $(\ref{1.0.3})$ and $(\ref{1.0.2})$ hold. Then 
\begin{align}\label{lem2.2.1.1}
  r v_r(r, t) \geqslant \frac{U(r, t) v(r, t)}{1+\int_0^r \frac{U \left(\rho, t\right)} {\rho} \mathrm{d} \rho}, 
  \quad (r,t) \in(0, R) \times \left(0, T_{\max }\right) \text {, }
\end{align}
where 
\begin{align*}
  U(r, t):=w \big(r^2, t\big), 
  \quad (r,t) \in[0, R] \times \left[0, T_{\max }\right). 
\end{align*}
\end{lem}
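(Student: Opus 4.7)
The plan is to rewrite the elliptic equation for $v$ as a first-order integral identity and then compare $rv_r$ with $Uv$ by introducing a suitable monotone auxiliary quantity, so that the claimed inequality follows from an integration starting at the origin. Since $\tau=0$ and $n=2$, in radial coordinates the second equation of \eqref{1.0.0} reads $(rv_r)_r = ruv$, which upon integration over $(0,r)$ gives
\begin{align*}
  rv_r(r,t) = \int_0^r \rho\, u(\rho,t) v(\rho,t)\, d\rho,\quad (r,t)\in(0,R)\times(0,T_{\max}).
\end{align*}
In parallel, differentiating $U(r,t)=w(r^2,t)$ and invoking \eqref{lem-2.1.2.2} yields $U_r(r,t)=r u(r,t)$.

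Next I would introduce the weight $A(r,t):=1+\int_0^r U(\rho,t)/\rho\, d\rho$, so that $A_r=U/r$, and study the deficit
\begin{align*}
  F(r,t):= rv_r(r,t)\, A(r,t) - U(r,t) v(r,t).
\end{align*}
The key observation is that differentiating $F$ in $r$ produces a clean cancellation: the contribution $rv_r\cdot A_r = v_r U$ cancels against $Uv_r$, and the remaining pieces collapse to
\begin{align*}
  F_r(r,t) = (rv_r)_r\cdot(A-1) = ruv\int_0^r \frac{U(\rho,t)}{\rho}\, d\rho \geq 0,
\end{align*}
where nonnegativity is ensured by $u,v>0$ and $U\geq 0$ (Proposition~\ref{0.0}).

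Finally, since $rv_r\big|_{r=0}=0$ and $U(0,t)=0$, one has $F(0,t)=0$; together with $F_r\geq 0$ this forces $F(r,t)\geq 0$ on $[0,R]\times(0,T_{\max})$, and dividing by $A(r,t)\geq 1$ yields \eqref{lem2.2.1.1}. The main (and essentially only) obstacle is guessing the correct weight $A$: the specific shape $1+\int_0^r U/\rho\, d\rho$ is exactly what is needed for the cross terms in $F_r$ to telescope, whereas almost any other normalization leaves a residual term of indefinite sign. Once $F$ and $A$ are chosen, the argument reduces to an algebraic cancellation and a sign check, matching the ODE comparison principle employed in \cite[Lemma 3.1]{2023-PRSESA-WangWinkler}.
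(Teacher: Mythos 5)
Your argument is correct: with $(rv_r)_r=ruv$, $U_r=ru$ and $A_r=U/r$, the derivative of $F=rv_rA-Uv$ indeed collapses to $ruv(A-1)\geqslant 0$ after the exact cancellation of $v_rU$ against $Uv_r$, and $F(0,t)=0$ then gives the claim. This is essentially the same ODE-comparison idea as the proof the paper delegates to \cite[Lemma 3.1]{2023-PRSESA-WangWinkler}, merely packaged as monotonicity of the deficit function $F$ with the weight $A$ acting as the integrating factor, so nothing further is needed.
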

In view of Lemma~\ref{lem2.2.1}, we can estimate $v$ from below, whose proof is the same as \cite[ Lemma 2.4]{2023-CVPDE-AhnWinkler}.
\begin{lem}\label{lem-2.2.2}
Assume that $(\ref{1.0.3})$ and $(\ref{1.0.2})$ hold. Then
\begin{align}\label{lem-2.2.2.1}
  v(r, t) \geqslant M \exp \Big[-\Big(\frac{{m_0}}{2 \pi} \cdot \ln \frac{R}{r}\Big)^{\frac{1}{2}}\Big], 
  \quad (r,t) \in (0, R] \times \left(0, T_ {\max }\right).
\end{align}
\end{lem}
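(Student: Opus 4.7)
The plan is to follow \cite[Lemma 2.4]{2023-CVPDE-AhnWinkler}: derive a weighted $L^2$ bound on $v_r/v$ by testing the second equation of (\ref{1.0.0}) against $1/v$, and then convert it to a pointwise lower bound for $v$ via Cauchy--Schwarz.

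First, I would rewrite the second equation of (\ref{1.0.0}) with $\tau=0$ in its radial form $(rv_r)_r = r u v$ on $(0,R)$. Since $v>0$ in $\bar\Omega \times (0, T_{\max})$ by Proposition \ref{0.0}, I may divide by $v$ and integrate from $0$ to $R$, obtaining
\begin{align*}
\int_0^R \frac{(rv_r)_r}{v}\,\mathrm{d}r = \int_0^R r u\,\mathrm{d}r = \frac{m_0}{2\pi}.
\end{align*}
An integration by parts, together with $v_r(0,t)=0$ (by radial symmetry) and $v(R,t)=M$, yields the identity
\begin{align*}
\frac{R v_r(R,t)}{M} + \int_0^R r\left(\frac{v_r}{v}\right)^2 \mathrm{d}r = \frac{m_0}{2\pi},
\end{align*}
and since $r v_r = \int_0^r \rho u v\,\mathrm{d}\rho\geq 0$ makes the boundary term nonnegative, we obtain the weighted estimate
\begin{align*}
\int_0^R r\left(\frac{v_r(r,t)}{v(r,t)}\right)^2 \mathrm{d}r \leq \frac{m_0}{2\pi}.
\end{align*}

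Next, because $v$ is non-decreasing in $r$ (again from $rv_r \geq 0$), we have $\ln(M/v(r,t)) = \int_r^R v_\rho/v\,\mathrm{d}\rho$. Splitting the integrand as $(\sqrt{\rho}\,v_\rho/v)\cdot(1/\sqrt{\rho})$ and applying Cauchy--Schwarz gives
\begin{align*}
\ln\frac{M}{v(r,t)} \leq \left(\int_r^R \rho\left(\frac{v_\rho}{v}\right)^2 \mathrm{d}\rho\right)^{1/2} \left(\int_r^R \frac{\mathrm{d}\rho}{\rho}\right)^{1/2} \leq \left(\frac{m_0}{2\pi}\cdot \ln\frac{R}{r}\right)^{1/2},
\end{align*}
and exponentiating produces (\ref{lem-2.2.2.1}).

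I do not anticipate a serious obstacle. The only delicate point is the integration by parts: the boundary contribution $[rv_r/v]_0^R$ is $Rv_r(R,t)/M$ at the outer endpoint, while at $r=0$ it vanishes since $v_r(0,t)=0$ by radial symmetry and $v(0,t)>0$ by the strict positivity of $v$. Notably, this argument does not require the auxiliary Lemma~\ref{lem2.2.1}; that lemma, giving a lower bound on $rv_r$, is a parallel estimate from \cite{2023-PRSESA-WangWinkler} that may be used independently later for the blow-up analysis.
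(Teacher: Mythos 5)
Your argument is correct and is essentially the proof the paper delegates to \cite[Lemma 2.4]{2023-CVPDE-AhnWinkler}: testing $0=\Delta v-uv$ against $1/v$ (equivalently integrating $r(\ln v)_r^2$), discarding the nonnegative boundary term $Rv_r(R,t)/M$, and applying Cauchy--Schwarz to $\ln(M/v)=\int_r^R v_\rho/v\,\mathrm{d}\rho$ yields exactly $(\ref{lem-2.2.2.1})$. Your side remark is also accurate: the estimate does not actually require Lemma~\ref{lem2.2.1}, whose lower bound on $rv_r$ is only needed later for the blow-up functional.
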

Applying the above lemmas, a lower bound for $w_t$ can be derived.
\begin{lem}\label{lem2.2.3}
Assume that $(\ref{1.0.3})$ and $(\ref{1.0.2})$ hold. Suppose that $S(\xi)$ satisfies $(\ref{1.0.4})$ with $\beta>1$ and $k>0$. Then, there exists a constant $C_{37}=C_{37}(\beta , R) >0$, such that
\begin{equation}\label{lem2.2.3.1}
\begin{aligned}
  w_t(s,t) \geqslant 4sw_{s s}(s,t) 
  + M kC_{37}
  \cdot \frac{s^{\frac{\beta-1}{2}} w(s,t) w_s^\beta(s,t) }{1+\frac{1}{2} \int_0^{s} \frac{ w(\rho, t)}{\rho} \mathrm{d} \rho},
  \quad (s,t) \in \left(0, R^2\right) \times \left(0, T_{\max }\right).
\end{aligned}
\end{equation}
\end{lem}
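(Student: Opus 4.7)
\textbf{Proof proposal for Lemma~\ref{lem2.2.3}.}

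The plan is to start from the identity of Lemma~\ref{lem-2.1.2}, namely
\begin{align*}
w_t(s,t) = 4s w_{ss}(s,t) + \sqrt{s}\, S\bigl(2w_s(s,t)\bigr)\, v_r\bigl(\sqrt{s},t\bigr),
\end{align*}
so it suffices to estimate the last summand from below by the target expression. Using hypothesis $(\ref{1.0.4})$ and the nonnegativity of $w_s=\tfrac12 u(\sqrt{s},t)$ from Lemma~\ref{lem-2.1.2}, I obtain $S(2w_s)\geqslant k\,2^{\beta}\,w_s^{\beta}$. Next, I would apply Lemma~\ref{lem2.2.1} at $r=\sqrt{s}$, where $U(\sqrt{s},t)=w(s,t)$, and perform the substitution $\rho^2=\sigma$ in the integral to convert $\int_0^{\sqrt{s}}\tfrac{w(\rho^2,t)}{\rho}\mathrm{d}\rho$ into $\tfrac12\int_0^{s}\tfrac{w(\sigma,t)}{\sigma}\mathrm{d}\sigma$; this yields exactly the denominator appearing in $(\ref{lem2.2.3.1})$.

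At this stage the estimate reads
\begin{align*}
\sqrt{s}\,S(2w_s)\,v_r(\sqrt{s},t)\;\geqslant\; 2^{\beta}k\,w_s^{\beta}\cdot \frac{w(s,t)\,v(\sqrt{s},t)}{1+\tfrac12\int_0^{s}\tfrac{w(\sigma,t)}{\sigma}\mathrm{d}\sigma},
\end{align*}
so the remaining task is to extract a factor of $s^{(\beta-1)/2}$ from $v(\sqrt{s},t)$. Invoking Lemma~\ref{lem-2.2.2} gives $v(\sqrt{s},t)\geqslant M\exp\bigl[-c L^{1/2}\bigr]$ with $c=\sqrt{m_0/(4\pi)}$ and $L=\ln(R^2/s)$.

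The main obstacle is comparing this exponential with a power of $s$, and this is precisely where the assumption $\beta>1$ is needed. I would use the elementary inequality
\begin{align*}
-c L^{1/2}\;\geqslant\;-\tfrac{\beta-1}{2}L-\tfrac{c^{2}}{2(\beta-1)},\qquad L\geqslant 0,
\end{align*}
which follows from Young's inequality (equivalently, from minimising $L\mapsto \tfrac{\beta-1}{2}L-cL^{1/2}$ over $L\geqslant 0$). Exponentiating and recalling $e^{-(\beta-1)L/2}=(s/R^{2})^{(\beta-1)/2}$ yields
\begin{align*}
v(\sqrt{s},t)\;\geqslant\;\frac{M}{R^{\beta-1}}\,e^{-\frac{m_0}{8\pi(\beta-1)}}\, s^{(\beta-1)/2}.
\end{align*}
Plugging this into the earlier lower bound produces $(\ref{lem2.2.3.1})$ with the explicit choice
\begin{align*}
C_{37}=\frac{2^{\beta}}{R^{\beta-1}}\,\exp\!\Bigl(-\tfrac{m_0}{8\pi(\beta-1)}\Bigr),
\end{align*}
which depends only on $\beta$, $R$ (and the fixed mass $m_0=\int_\Omega u_0$, hence on $u_0$). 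Aside from the Young-type conversion of the Gaussian-tail estimate into a power-type bound, each step is a direct substitution or application of a lemma already established, so I would expect no further technical hurdle.
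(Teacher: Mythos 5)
Your proposal is correct and follows essentially the same route as the paper: the identity from Lemma~\ref{lem-2.1.2}, the lower bounds from Lemmas~\ref{lem2.2.1} and~\ref{lem-2.2.2}, and the Young-type inequality converting the exponential of $-c\,(\ln(R^2/s))^{1/2}$ into the power $s^{(\beta-1)/2}$ at the cost of the factor $e^{-m_0/(8\pi(\beta-1))}$, yielding the same constant $C_{37}=2^{\beta}R^{-(\beta-1)}e^{-m_0/(8\pi(\beta-1))}$. Your remark that this constant also depends on $m_0=\int_\Omega u_0$ is accurate and applies equally to the paper's own statement.
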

\begin{proof}
By Young's inequality, we have
\begin{align}\label{lem.2.2.3.4}
  \Big(\frac{{m_0}}{2 \pi} \cdot \ln \frac{R}{\sqrt{s}}\Big)^{\frac{1}{2}} 
  \leqslant & (\beta-1)\ln \frac{R}{\sqrt{s}}+\frac{{m_0}}{8\pi(\beta-1)} \notag\\
  = &\ln \frac{R^{\beta-1}}{s^{\frac{\beta-1}{2}}}+\frac{{m_0}}{8\pi(\beta-1)}.
\end{align}
Substituting (\ref{lem.2.2.3.4}) into (\ref{lem-2.2.2.1}) yields 
\begin{align}\label{lem-2.2.2.2}
  v(\sqrt{s}, t) \geqslant M R^{-(\beta-1)} e^{-\frac{{m_0}}{8 \pi (\beta-1)}} s^{\frac{\beta-1}{2}},  
  \quad (s,t) \in \left(0, R^2\right) \times \left(0, T_{\max }\right) \text {. }
\end{align}
Recalling the definitions of $U$ and $w$, we derive that
\begin{align}\label{lem2.2.3.3}
  \int_0^{\sqrt s} U(\rho, t) \rho^{-1} \mathrm{d} \rho
  =\frac{1}{2} \int_0^s \frac{w(\rho, t)}{\rho} \mathrm{d} \rho.
\end{align}
According to (\ref{lem2.2.1.1}), (\ref{lem-2.2.2.2}), (\ref{lem2.2.3.3}) and the positivity of $w$, we obtain
\begin{align}\label{lem2.2.3.2}
  \sqrt {s}  v_r \big(\sqrt {s} ,t\big) 
  \geqslant & M R^{-(\beta-1)} e^{-\frac{{m_0}}{8 \pi (\beta-1)}} \frac{s^{\frac{\beta-1}{2}} w(s, t) }{1+\frac{1}{2} 
    \int_0^s \frac{w(\rho, t)}{\rho} \mathrm{d} \rho},
    \quad (s,t) \in \left(0, R^2\right) \times \left(0, T_{\max }\right) \text {. }
\end{align} 
By (\ref{1.0.4}), (\ref{lem-2.1.2.3}) and (\ref{lem2.2.3.2}), there exists a positive constant $C_{37}(\beta , R)=2^\beta R^{-(\beta-1)} e^{-\frac{{m_0}}{8 \pi (\beta-1)}}$ such that (\ref{lem2.2.3.1}) holds.
\end{proof}
The approach to detect blow-up is based on a differential inequality of a moment-type functional $\phi(t)$.
For any given $\gamma=\gamma\left(\beta\right) \in(0,1)$, we define such a positive functional
\begin{align}\label{2.3.0.1}
  \phi(t):=\int_0^{R^2} s^{-\gamma} w(s, t) \mathrm{d} s, 
  \quad t \in\left[0, T_{\max }\right),
\end{align}
which is well-defined and belongs to $C\left(\left[0, T_{\max }\right)\right) \cap C^1\left(\left(0, T_{\max }\right)\right)$. In order to establish the differential inequality of $\phi(t)$, given $\gamma \in (0,1)$, we further introduce an auxiliary functional, which belongs to $ C (\left[ 0,T_{\max} \right)$), defined as follows:
\begin{align}\label{2.3.0.2}
  \psi(t):=\int_0^{R^2} s^{\frac{\beta-1}{2}-\gamma} w(s, t)w^{\beta}_s(s,t)  \mathrm{d}s, 
  \quad t \in\left[0, T_{\max }\right).
\end{align} 
\begin{lem}\label{lem-2.3.1}
Assume that $(\ref{1.0.3})$ and $(\ref{1.0.2})$ hold. Then for any $\beta \in (1,+\infty)$ and $l=l(\beta) \in (1,2+\frac{2\gamma-\beta-1}{2\beta})$ with $\gamma=\gamma(\beta) \in (0,1)$, there exist constants $C_{38}=C_{38}(\beta,R)>0$, $C_{39}=C_{39}(\beta,R)>0$ and $C_{40}=C_{40}(\beta,R)>0$ such that the following three estimates hold
\begin{align}\label{lem-2.3.1.5}
  w(s,t) \leqslant C_{38} \psi^{\frac{1}{2\beta}}(t), 
  \quad (s,t) \in(0,R^2] \times (0,T_{\max})
\end{align}
and
\begin{align}\label{lem-2.3.1.1}
  \int_0^{R^2} \rho^{-l}w^2(\rho,t) \mathrm{d}{\rho}
  \leqslant C_{39}\psi^{\frac{1}{\beta}}(t), 
  \quad t \in  (0,T_{\max})
\end{align}
as well as
\begin{align}\label{lem-2.3.2.2}
  \phi(t) \leqslant C_{40} \psi^{\frac{1}{2\beta}}(t),
  \quad t \in\left(0, T_{\max }\right).
\end{align}
\end{lem}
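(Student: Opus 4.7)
The plan rests on two ingredients: the identity
\begin{align*}
w^{2}(s,t) = 2\int_{0}^{s} w(\sigma,t)\,w_{s}(\sigma,t)\,\mathrm{d}\sigma,
\end{align*}
coming from $w(0,t)=0$, and a weighted H\"older inequality with exponents $\beta$ and $\beta/(\beta-1)$ that factors
\begin{align*}
w\,w_{s} = \Bigl[\sigma^{\frac{\beta-1}{2}-\gamma}\,w\,w_{s}^{\beta}\Bigr]^{\frac{1}{\beta}} \cdot \Bigl[\sigma^{-\frac{1}{2}+\frac{\gamma}{\beta-1}}\,w\Bigr]^{\frac{\beta-1}{\beta}},
\end{align*}
so that the first factor, after integration in $\sigma$, is controlled by $\psi^{1/\beta}(t)$.

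To prove (\ref{lem-2.3.1.5}), I substitute this decomposition into the identity, apply H\"older, and bound $w(\sigma,t) \leq w(R^{2},t) = \frac{m_{0}}{2\pi}$ in the leftover factor, using that $w$ is nondecreasing in $s$ since $u\geq 0$. The remaining $\sigma$-integral $\int_{0}^{s} \sigma^{-1/2+\gamma/(\beta-1)}\,\mathrm{d}\sigma$ converges at $0$ because $\gamma/(\beta-1) > -\tfrac{1}{2}$, and its value is a constant multiple of $s^{1/2+\gamma/(\beta-1)}$. This yields $w^{2}(s,t) \leq C\,\psi^{1/\beta}(t)\,s^{(\beta-1)/(2\beta)+\gamma/\beta}$; since the $s$-exponent is positive and $s \leq R^{2}$, taking square roots gives the desired pointwise bound.

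The most delicate step is (\ref{lem-2.3.1.1}), since the naive combination of (\ref{lem-2.3.1.5}) with $\int_{0}^{R^{2}} \rho^{-l}\,\mathrm{d}\rho$ diverges at the origin once $l>1$. I would instead integrate by parts in $\int_{0}^{R^{2}}\rho^{-l}w^{2}\,\mathrm{d}\rho$, using $\tfrac{\mathrm{d}}{\mathrm{d}\rho}\!\left(\rho^{1-l}/(1-l)\right)=\rho^{-l}$, to obtain
\begin{align*}
\int_{0}^{R^{2}}\rho^{-l}w^{2}\,\mathrm{d}\rho = \frac{(R^{2})^{1-l}}{1-l}\,w^{2}(R^{2},t) + \frac{2}{l-1}\int_{0}^{R^{2}}\rho^{1-l}\,w\,w_{s}\,\mathrm{d}\rho.
\end{align*}
The boundary term at $\rho=R^{2}$ is nonpositive because $1-l<0$, while the one at $\rho=0$ vanishes since $w(\rho,t)=O(\rho)$ forces $\rho^{1-l}w^{2}(\rho,t) = O(\rho^{3-l}) \to 0$ (note $l<3$ throughout the admissible range). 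Applying the same H\"older splitting to the remaining integral and bounding $w\leq \frac{m_{0}}{2\pi}$ in the leftover factor, integrability at $\rho=0$ reduces to $\frac{\beta(1-l)+\gamma}{\beta-1}-\frac{1}{2}>-1$, which rearranges precisely to $l < \frac{3\beta+2\gamma-1}{2\beta} = 2 + \frac{2\gamma-\beta-1}{2\beta}$, matching the hypothesis.

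Finally, (\ref{lem-2.3.2.2}) follows immediately from (\ref{lem-2.3.1.5}) by $\phi(t) \leq C_{38}\,\psi^{1/(2\beta)}(t)\int_{0}^{R^{2}}s^{-\gamma}\,\mathrm{d}s$, the $s$-integral converging thanks to $\gamma\in(0,1)$. The main obstacle, as indicated, is the integrability constraint in (\ref{lem-2.3.1.1}); the integration-by-parts step is what replaces the exponent $-l$ by $1-l$ and is precisely what dictates the specific upper bound on $l$ in the statement.
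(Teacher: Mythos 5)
Your proposal is correct, and for (\ref{lem-2.3.1.5}) and (\ref{lem-2.3.2.2}) it coincides with the paper's argument: the identity $w^2=2\int_0^s ww_\sigma$, the H\"older splitting that isolates $\psi^{1/\beta}$, the bound $w\leqslant\frac{m_0}{2\pi}$ in the leftover factor, and integration of $\sigma^{\frac{2\gamma-\beta+1}{2\beta-2}}$ (convergent since $\frac{2\gamma-\beta+1}{2\beta-2}>-1$ under the hypothesis on $l$). For (\ref{lem-2.3.1.1}) you take a genuinely different, though equivalent, route. The paper does not combine the final pointwise bound $w\leqslant C_{38}\psi^{1/(2\beta)}$ with $\int_0^{R^2}\rho^{-l}\,\mathrm{d}\rho$ (which, as you say, diverges); it instead keeps the intermediate inequality $w^2(\rho,t)\leqslant 2\bigl(\tfrac{2\beta-2}{2\gamma+\beta-1}\cdot\tfrac{m_0}{2\pi}\bigr)^{\frac{\beta-1}{\beta}}\psi^{\frac{1}{\beta}}(t)\,\rho^{\frac{2\gamma+\beta-1}{2\beta}}$ — exactly the estimate you derive and then discard before taking square roots — and multiplies it by $\rho^{-l}$; the resulting integral $\int_0^{R^2}\rho^{-l+\frac{2\gamma+\beta-1}{2\beta}}\,\mathrm{d}\rho$ converges precisely when $l<2+\frac{2\gamma-\beta-1}{2\beta}$. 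Your integration by parts (with the nonpositive boundary term at $\rho=R^2$, the vanishing one at $\rho=0$ from $w=O(\rho)$, and a second H\"older application) arrives at the identical constraint on $l$, so it is sound; it is just a detour, since the refined pointwise estimate you already established yields (\ref{lem-2.3.1.1}) in one line. The only advantage of your version is that it makes transparent why the threshold for $l$ is what it is, at the cost of an extra boundary-term verification.
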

\begin{proof}
Since $\beta>1$ and $\gamma>0$, we can choose $l=l(\beta)$ fulfilling 
\begin{align*}
  1<l<2+\frac{2\gamma-\beta-1}{2\beta},
\end{align*}
which implies that
\begin{align}\label{lem-2.3.1.6}
  \frac{2\gamma-\beta+1}{2\beta-2}
  > \frac{2\beta l-4\beta+2}{2\beta-2}
  = -1+\frac{\beta(l-1)}{\beta-1}>-1.
\end{align}
Applying Hölder's inequality, and using (\ref{2.3.0.2}) and $\beta>1$, we obtain
\begin{align}\label{lem-2.3.1.3}
  \frac{1}{2}w^2(\rho,t)
  = &\int_0^{\rho}w(s,t)w_s(s,t) \mathrm{d}{s} \notag \\
  \leqslant & \Big( \int_0^\rho s^{\frac{\beta-1}{2}-\gamma}w(s,t)w_s^{\beta}(s,t) \mathrm{d}s\Big)^\frac{1}{\beta} \cdot  
              \Big(\int_0^\rho s^{\frac{2\gamma-\beta+1}{2\beta-2}} w(s,t) \mathrm{d}s \Big)^{\frac{\beta-1}{\beta}} \notag\\
  \leqslant & \psi^{\frac{1}{\beta}}(t) \Big(\int_0^{\rho} s^{\frac{2\gamma-\beta+1}{2\beta-2}} w(s,t) \mathrm{d}s \Big)
              ^{\frac{\beta-1}{\beta}}.
\end{align}
Due to (\ref{lem-2.3.1.6}) and $w \leqslant \frac{{m_0}}{2\pi}$, we have 
\begin{align}\label{lem-2.3.1.10}
  \Big(\int_0^{\rho} s^{\frac{2\gamma-\beta+1}{2\beta-2}} w(s,t) \mathrm{d}s \Big)^{\frac{\beta-1}{\beta}}
  \leqslant & \Big(\frac{{m_0}}{2 \pi}\Big)^{\frac{\beta-1}{\beta}}  
              \Big(\int_0^\rho s^{\frac{2\gamma-\beta+1}{2\beta-2}}\mathrm{d}s\Big)
              ^{\frac{\beta-1}{\beta}} \notag\\
  = &\Big(\frac{2\beta-2}{2\gamma+\beta-1} \cdot \frac{{m_0}}{2 \pi}\Big)^\frac{\beta-1}{\beta}  \rho^{\frac{2\gamma+\beta-1}{2\beta}}.
\end{align}
Inserting (\ref{lem-2.3.1.10}) into (\ref{lem-2.3.1.3}), we infer that
\begin{align}\label{lem-2.3.1.11}
   \frac{1}{2}w^2(\rho,t)= \int_0^{\rho}w(s,t)w_s(s,t) \mathrm{d}{s}
  \leqslant &  \Big(\frac{2\beta-2}{2\gamma+\beta-1} \cdot \frac{{m_0}}{2 \pi}\Big)^\frac{\beta-1}{\beta} 
          \psi^{\frac{1}{\beta}}(t)  
          \rho^{\frac{2\gamma+\beta-1}{2\beta}}.
\end{align}
Due to $\rho \leqslant R^2$, there exists a constant $C_{38}=C_{38}(\beta,R)=\sqrt{2}(\frac{2\beta-2}{2\gamma+\beta-1} \cdot \frac{{m_0}}{2 \pi})^\frac{\beta-1}{2\beta}R^{\frac{2\gamma+\beta-1}{2\beta}}>0$ such that 
\begin{align}\label{lem-2.3.1.5-1}
  w(\rho,t) \leqslant C_{38} \psi^{\frac{1}{2\beta}}(t), 
  \quad (\rho,t) \in(0,R^2] \times (0,T_{\max}),
\end{align}
which is (\ref{lem-2.3.1.5}). Combining (\ref{lem-2.3.1.11}) with the fact that $l<2+\frac{2\gamma-\beta-1}{2\beta}$, there exists a positive constant $C_{39}=C_{39}(\beta,R)=2(\frac{2\beta-2}{2\gamma+\beta-1} \cdot \frac{{m_0}}{2\pi})^\frac{\beta-1}{\beta} R^{2-2l+{\frac{2\gamma+\beta-1}{\beta}}} $ such that
\begin{align*}
  \int_0^{R^2} \rho^{-l}w^2(\rho,t) \mathrm{d}{\rho}
  \leqslant & 2\Big(\frac{2\beta-2}{2\gamma+\beta-1} \cdot \frac{{m_0}}{2 \pi}\Big)^\frac{\beta-1}{\beta} 
         \psi^{\frac{1}{\beta}}(t)\cdot 
         \int_0^{R^2} \rho^{-l+{\frac{2\gamma+\beta-1}{2\beta}}} \mathrm{d}\rho    \notag\\
  \leqslant & C_{39} \psi^{\frac{1}{\beta}}(t), 
  \quad t \in (0,T_{\max}),
\end{align*}
which is (\ref{lem-2.3.1.1}). Due to $\gamma<1$ and (\ref{lem-2.3.1.5-1}), there exists a positive constant $C_{40}=C_{40}(\beta,R)=C_{38}\frac{R^{1-\gamma}}{1-\gamma}$ such that
\begin{align*}
  \phi(t)=\int_0^{R^2} s^{-\gamma} w(s, t) \mathrm{d} s 
  \leqslant C_{40} \psi^{\frac{1}{2\beta}}(t),
  \quad t \in (0,T_{\max}),
\end{align*}
which is (\ref{lem-2.3.2.2}).
\end{proof}
In the following lemma, we establish a basic differential inequality of the moment-type functional $\phi(t)$ by making use of (\ref{lem-2.3.1.1}) and (\ref{lem-2.3.1.5}).
\begin{lem}\label{lem-2.3.2}
Assume that $S(\xi)$ satisfies $(\ref{1.0.3})$ and $(\ref{1.0.4})$ with $\beta>1$ and $k>0$. Then for any $u_0$ satisfying $(\ref{1.0.2})$ and some $\gamma=\gamma(\beta) \in (0,1)$, there exists a constant $C_{41}=C_{41}(\beta,R)>0$, such that the functionals $\phi(t)$ and $\psi(t)$ satisfy
\begin{align}\label{lem-2.3.2.1}
  \phi^{\prime}(t) 
  \geqslant \frac{M}{C_{41}} \cdot \frac{\psi(t)}{1+\psi^{\frac{1}{2\beta}}(t)}
       -C_{41} \psi^{\frac{1}{2\beta}}(t) -C_{41},
  \quad t\in (0,T_{\max}).
\end{align}
\end{lem}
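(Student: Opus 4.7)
The plan is to differentiate $\phi$ in time and apply Lemma~\ref{lem2.2.3} to obtain
\begin{align*}
\phi'(t) \geqslant 4\int_0^{R^2} s^{1-\gamma} w_{ss}(s,t) \mathrm{d}s + M k C_{37} \int_0^{R^2} \frac{s^{\frac{\beta-1}{2}-\gamma} w(s,t) w_s^\beta(s,t)}{1 + \frac{1}{2}\int_0^s \frac{w(\rho,t)}{\rho} \mathrm{d}\rho} \mathrm{d}s.
\end{align*}
I would fix $\gamma = \gamma(\beta) \in (0,1)$ satisfying the additional constraint $\gamma < \frac{\beta-1}{2(2\beta-1)}$, which is possible since $\beta > 1$ forces $\frac{\beta-1}{2(2\beta-1)} \in (0, \tfrac{1}{4})$; this choice is compatible with the hypotheses of Lemma~\ref{lem-2.3.1}.

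For the diffusion integral I would integrate by parts twice. Using $w_s(s,t) = \tfrac{1}{2} u(\sqrt{s}, t)$ (so that $\lim_{s \to 0^+} s^{1-\gamma} w_s = 0$), the non-negativity $w_s(R^2, t) \geqslant 0$, and the boundary values $w(0, \cdot) = 0$ and $w(R^2, \cdot) = \frac{m_0}{2\pi}$, the non-negative boundary term at $s=R^2$ can be dropped, yielding
\begin{align*}
4 \int_0^{R^2} s^{1-\gamma} w_{ss} \mathrm{d}s \geqslant -4(1-\gamma) R^{-2\gamma}\cdot \frac{m_0}{2\pi} - 4\gamma(1-\gamma) \int_0^{R^2} s^{-\gamma-1} w(s,t) \mathrm{d}s.
\end{align*}
To control the singular integral on the right, I would not use the global bound (\ref{lem-2.3.1.5}) but the sharper pointwise bound extracted from (\ref{lem-2.3.1.11}), namely $w(s,t) \leqslant c_1 \psi^{\frac{1}{2\beta}}(t)\, s^{\frac{2\gamma+\beta-1}{4\beta}}$ for all $(s,t) \in (0,R^2] \times (0,T_{\max})$ with $c_1 = c_1(\beta, R)$. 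The precise role of the constraint $\gamma < \frac{\beta-1}{2(2\beta-1)}$ is to guarantee $-\gamma - 1 + \frac{2\gamma+\beta-1}{4\beta} > -1$, so that $\int_0^{R^2} s^{-\gamma-1}w(s,t)\mathrm{d}s \leqslant c_2 \psi^{\frac{1}{2\beta}}(t)$ for some $c_2 = c_2(\beta, R)$. This produces the summands $-C_{41}\psi^{\frac{1}{2\beta}}(t) - C_{41}$ in (\ref{lem-2.3.2.1}).

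For the reaction integral I would apply the same pointwise bound for $w$ to get
\begin{align*}
\int_0^s \frac{w(\rho, t)}{\rho} \mathrm{d}\rho \leqslant c_1 \psi^{\frac{1}{2\beta}}(t) \int_0^s \rho^{\frac{2\gamma+\beta-1}{4\beta} - 1} \mathrm{d}\rho \leqslant c_3 \psi^{\frac{1}{2\beta}}(t)
\end{align*}
uniformly in $s \in (0, R^2)$, where $c_3 = c_3(\beta, R)$ and where the integral at the origin converges because $\beta > 1$ and $\gamma > 0$. Therefore the denominator of the reaction integrand is bounded pointwise by $1 + c_3 \psi^{\frac{1}{2\beta}}(t)$, and the whole reaction integral is bounded below by $\frac{MkC_{37}}{1 + c_3 \psi^{\frac{1}{2\beta}}(t)}\, \psi(t)$. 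The elementary inequality $1 + c_3 x \leqslant (1+c_3)(1+x)$ for $x \geqslant 0$ then converts this into $\frac{M}{C_{41}} \cdot \frac{\psi(t)}{1 + \psi^{\frac{1}{2\beta}}(t)}$, and choosing $C_{41}$ large enough to absorb all the constants above yields (\ref{lem-2.3.2.1}).

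The main obstacle is the convergence at the origin of the singular term $\int_0^{R^2} s^{-\gamma-1} w(s,t) \mathrm{d}s$ arising from the diffusion integration by parts, and simultaneously of $\int_0^s w(\rho,t)/\rho\, \mathrm{d}\rho$ appearing in the denominator; both require the sharp pointwise estimate on $w$ rather than the uniform bound, and together they pin down the admissible range $\gamma \in (0, \frac{\beta-1}{2(2\beta-1)})$, which is nonempty precisely because of the hypothesis $\beta > 1$.
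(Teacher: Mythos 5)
Your argument is correct and follows the paper's proof almost step for step: differentiate $\phi$, insert the lower bound of Lemma~\ref{lem2.2.3}, integrate the diffusion term by parts twice, discard the nonnegative boundary contribution at $s=R^2$, and bound both $\int_0^{R^2}s^{-\gamma-1}w\,\mathrm{d}s$ and the denominator $\int_0^s w(\rho,t)\rho^{-1}\,\mathrm{d}\rho$ by a multiple of $\psi^{\frac{1}{2\beta}}(t)$; your restriction $\gamma<\frac{\beta-1}{2(2\beta-1)}$ is exactly the paper's $\gamma<\frac{\beta-1}{4\beta-2}$. The only (harmless) deviation is technical: the paper controls the two singular integrals via H\"older's inequality together with the weighted $L^2$ estimate (\ref{lem-2.3.1.1}), which forces the auxiliary exponent $l\in\big(2\gamma+1,\,2+\frac{2\gamma-\beta-1}{2\beta}\big)$, whereas you use the pointwise bound $w(s,t)\leqslant c_1\psi^{\frac{1}{2\beta}}(t)\,s^{\frac{2\gamma+\beta-1}{4\beta}}$ read off directly from (\ref{lem-2.3.1.11}); both devices are consequences of the same computation in Lemma~\ref{lem-2.3.1}, produce the same power $\psi^{\frac{1}{2\beta}}$, and lead to the identical admissible range for $\gamma$, so your version is a slightly more direct packaging of the same proof.
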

\begin{proof}
For any $\beta>1$, we fix $\gamma=\gamma(\beta) \in (0,1)$ fulfilling 
\begin{align*} 
  0<\gamma<\frac{\beta-1}{4\beta-2},
\end{align*}
which implies 
\begin{align}\label{lem-2.3.2.4}
  \left(2\gamma+1\right)-\Big(2+\frac{2\gamma-\beta-1}{2\beta}\Big)
  =\frac{\gamma(4\beta-2)-\beta+1}{2\beta}<0.
\end{align}
Thus, we choose $l=l(\beta)>0$ such that
\begin{align}\label{lem-2.3.2.5}
  2\gamma+1< l<2+\frac{2\gamma-\beta-1}{2\beta}.
\end{align}
According to (\ref{lem2.2.3.1}) and the definition of $\phi$, it is obvious that
\begin{align}\label{lem-2.3.2.3}
  \phi^{\prime}(t) 
  \geqslant 4 {\int_0^{R^{2}}}s^{1-\gamma}  w_{s s}(s, t)\mathrm{d}s
       +M kC_{37}  \int_0^{R^2}\frac{s^{\frac{\beta-1}{2}-\gamma}w(s, t) w_s^\beta(s, t)}{1+\frac{1}{2}
       \int_0^s \frac{w(\rho, t)}{\rho} \mathrm{d} \rho} \mathrm{d}s
\end{align}
for all $ t \in\left(0, T_{\max }\right)$. Due to $\gamma<1$, we obtain
\begin{align}\label{lem-2.3.2.12}
  s^{1-\gamma}w_s(s,t)\rightarrow 0 \text{ and } s^{-\gamma}w(s,t)\rightarrow 0 \ \text{as}\
  s\rightarrow 0,  \quad  t \in (0,T_{\max}).
\end{align}
Integrating by parts, and combining (\ref{lem-2.3.2.12}) with $w_s(s,t)\geqslant 0$ inferred from (\ref{lem-2.1.2.2}), we deduce that
\begin{align}\label{lem-2.3.2.7}
  4 {\int_0^{R^{2}}}s^{1-\gamma}  w_{s s}(s, t)\mathrm{d}s 
  = &-4(1-\gamma) \int_0^{R^{2}} s^{-\gamma}w_s(s,t) \mathrm{d}s 
      +4R^{2 (1-\gamma)} w_s(R^2,t) \notag\\
  \geqslant & -4(1-\gamma) \int_0^{R^{2}} s^{-\gamma}w_s(s,t) \mathrm{d}s \notag\\
  = &-4\gamma (1-\gamma) \int_0^{R^2} s^{-\gamma-1}w(s,t) \mathrm{d}s 
     -4(1-\gamma)\frac{R^{-2\gamma}{m_0}}{2 \pi}.
\end{align}
By means of Hölder's inequality, along with (\ref{lem-2.3.2.5}) and (\ref{lem-2.3.1.1}), we find that
\begin{align}\label{lem-2.3.2.8}
  \int_0^{R^2}s^{-\gamma-1}w(s,t) \mathrm{d}s 
   \leqslant & \Big(\int_0^{R^2}s^{-l}w^2(s,t) \mathrm{d}s\Big)^{\frac{1}{2}} \cdot 
          \Big(\int_0^{R^2}s^{l-2\gamma-2} \mathrm{d}s\Big)^{\frac{1}{2}}\notag\\
   \leqslant &  C_{39}^{\frac{1}{2}} \Big(\int_0^{R^2}s^{l-2\gamma-2} \mathrm{d}s\Big)^{\frac{1}{2}}  
           \psi^{\frac{1}{2\beta}}(t) \ \notag\\
   = &C_{42} \psi^{\frac{1}{2\beta}}(t), 
\end{align}
where $C_{42}=C_{42}(\beta,R)=C_{39}^{\frac{1}{2}}\Big(\int_0^{R^2}s^{l-2\gamma-2} \mathrm{d}s\Big)^{\frac{1}{2}}$. Inserting (\ref{lem-2.3.2.8}) into (\ref{lem-2.3.2.7}), we obtain
\begin{align}\label{lem-2.3.2.9}
  4{\int_0^{R^{2}}}s^{1-\gamma}  w_{s s}(s, t)\mathrm{d}s 
  \geqslant -4\gamma (1-\gamma)C_{42}  
       \psi^{\frac{1}{2\beta}}(t)-4(1-\gamma)\frac{R^{-2\gamma}{m_0}}{2 \pi}.
\end{align}
Using Hölder's inequality, and applying (\ref{lem-2.3.1.1}) and (\ref{lem-2.3.2.5}), we find that
\begin{align}\label{lem-2.3.2.10}
  \int_0^s \frac{w(\rho, t)}{\rho} \mathrm{d} \rho 
  \leqslant & \Big(\int_0^{R^2} \rho^{-l}w^2(\rho,t)\mathrm{d}\rho\Big)^{\frac{1}{2}} \cdot 
         \Big(\int_0^{R^2} \rho^{l-2} \mathrm{d}\rho\Big)^ {\frac{1}{2}} \notag\\
  \leqslant & {C_{39}}^{\frac{1}{2}}{C_{43}}\psi^{\frac{1}{2\beta}}(t), 
\end{align}
where $C_{43}=C_{43}(R)= \left(\int_0^{R^2} \rho^{l-2} \mathrm{d}\rho\right)^{\frac{1}{2}}$.
Combining (\ref{lem-2.3.2.10}) with the definition of $\psi$ in (\ref{2.3.0.2}), we have
\begin{align}\label{lem-2.3.2.11}
  \int_0^{R^2}\frac{s^{\frac{\beta-1}{2}-\gamma}w(s, t) w_s^\beta(s, t)}{1+\frac{1}{2}\int_0^s \frac{w(\rho, t)}{\rho} \mathrm{d} \rho} \mathrm{d}s 
  \geqslant & \frac{2\psi(t)}{2+{C_{39}}^{\frac{1}{2}}{C_{43}}\psi^{\frac{1}{2\beta}}(t)} \notag \\
  \geqslant & \frac{2}{\max\left\{2,{C_{39}}^{\frac{1}{2}}{C_{43}}\right\}} \cdot \frac{\psi(t)}{1+\psi^{\frac{1}{2\beta}}(t)}.
\end{align}
Therefore, we choose $C_{41}=C_{41}(\beta,R)>0$ to be sufficiently large and infer (\ref{lem-2.3.2.1}) from (\ref{lem-2.3.2.3}), (\ref{lem-2.3.2.9}) and (\ref{lem-2.3.2.11}). 
\end{proof}
\emph{\textbf{Proof of Theorem \ref{0.1}.}}
We denote
\begin{align}\label{lem-2.4.1.2}
  \phi_0=\phi_0\left(u_0\right):=\int_0^{R^2} s^{-\gamma} w_0(s) \mathrm{d} s
\end{align}
and
\begin{align}\label{lem-2.4.1.3}
  S:=\left\{t \in\left(0, T_{\max }\right) \mid \phi(t)>\frac{\phi_0}{2} \text { on }(0, t)\right\},
\end{align}
where $w_0(s)$ is defined in (\ref{lem-2.1.2.3}). Due to the continuity of $\phi(t)$, (\ref{2.3.0.1}) and (\ref{lem-2.4.1.2}), we infer that $S$ is non-empty. Thus, $T:=\sup S \in\left(0, T_{\max }\right] \subset(0, \infty]$ is well-defined. 

\textit{Step 1}: to prove that $T=T_{\max}$. Suppose that $T<T_{\max}$. According to the continuity of $\phi(t)$ and the definition of $T$, we have $\phi(T)=\frac{\phi_0}{2}$. In the following, we use (\ref{lem-2.3.2.1}) to prove $\phi(T) \geqslant \phi_0$, which gives a contradiction. 
 
 We first prove $\phi^{\prime}(t) \geqslant 0$ for all $t \in(0,T)$. As a consequence of (\ref{lem-2.3.2.2}) and (\ref{lem-2.4.1.3}), we obtain 
\begin{align}\label{lem-2.4.1.5}
  \psi(t) 
  \geqslant \Big(\frac{\phi(t)}{C_{40}}\Big)^{2\beta} 
  \geqslant \Big(\frac{\phi_0}{2 C_{40}}\Big)^{2\beta}, 
  \quad t \in(0,T).
\end{align}
Set
\begin{align*}
  f_M(z):=\frac{M}{2 C_{41}} \cdot \frac{z}{1+z^{\frac{1}{2\beta}}}-C_{41} z^{\frac{1}{2\beta}}-C_{41},
\end{align*}
then we have
\begin{align*}
  \inf _{z \geqslant C_{44}} f_M(z) \rightarrow+ \infty \text { \ as } \ M \rightarrow \infty,
\end{align*}
where $C_{44}:=\left(\frac{\phi_0}{2 C_{40}}\right)^{2\beta}$. According to (\ref{lem-2.4.1.5}), there exists a constant $M^{\star}(u_0)>0$ such that
\begin{align}\label{f_M}
  f_M(\psi(t)) \geqslant 0, \quad  t \in(0, T), \ M \geqslant M^{\star}(u_0).
\end{align}
We choose $ M \geqslant M^{\star}(u_0)$ in the following proof. Since $g(z):=\frac{z^{1-\frac{1}{2\beta}}}{1+z^{\frac{1}{2\beta}}}$ is nondecreasing on $z\in (0,+\infty)$, and from (\ref{lem-2.3.2.1}), (\ref{lem-2.3.2.2}), (\ref{lem-2.4.1.5}) and (\ref{f_M}), we deduce that
\begin{align}\label{lem-2.4.1.6}
  \phi^{\prime}(t) 
  \geqslant & \frac{M}{2 C_{41}} \cdot \frac{\psi(t)}{1+\psi^{\frac{1}{2\beta}}(t)}+f_M(\psi(t)) \notag\\
  \geqslant & \frac{M}{2 C_{41}} \cdot \frac{\psi^{1-\frac{1}{2\beta}}(t)}{1+\psi^{\frac{1}{2\beta}}(t)} {\psi}^{\frac{1}{2\beta}} \notag\\
  \geqslant & \frac{M}{2 C_{41}} \cdot \frac{\left(\frac{\phi_0}{2 C_{40}}\right)^{1-\frac{1}{2\beta}}}{1+(\frac{\phi_0}{2 C_{40}})^{\frac{1}{2\beta}}} \cdot \frac{\phi(t)}{C_{40}} \notag\\
  = &C_{45} \phi(t), 
  \quad  t \in(0, T),
\end{align} 
where $C_{45}=C_{45}\left(\beta,u_0, M\right)=\frac{M}{2 C_{40}C_{41}} \cdot \frac{\left(\frac{\phi_0}{2 C_{40}}\right)^{1-\frac{1}{2\beta}}}{1+(\frac{\phi_0}{2 C_{40}})^{\frac{1}{2\beta}}} $. By (\ref{lem-2.4.1.6}), we have $\phi^{\prime}(t) \geqslant 0$ for all $t \in(0,T)$, which, along with the continuity of $\phi(t)$, implies $\phi(T)\geqslant \phi_0$. 

\textit{Step 2}: to prove that $T_{\max}<\infty$. It follows from (\ref{lem-2.4.1.6}) that
\begin{align*}
  \phi(t) \geqslant \phi_0 e^{C_{45}t}, \quad t \in(0,T).
\end{align*}
Combining this with (\ref{2.3.0.1}), and using $w\leqslant \frac{{m_0}}{2\pi}$ and $\gamma<1$, we infer that
\begin{align*}
  \frac{{m_0} R^{2(1-\gamma)}}{2 \pi(1-\gamma)} 
  \geqslant \phi(t) 
  \geqslant \phi_0 e^{C_{45} t}, \quad t \in\left(0, T_{\max }\right),
\end{align*}
which implies 
\begin{align*}
  T_{\max } \leqslant \frac{1}{C_{45}} \ln \frac{{m_0} R^{2(1-\gamma)}}{2 \pi(1-\gamma) \phi_0}.
\end{align*}
In consequence, for any given $u_0$, if $ M \geqslant M^{\star}(u_0)$, we infer that $T_{\max } $ must be finite.
\hfill$\Box$


\begin{thebibliography}{10}

\bibitem{2023-CVPDE-AhnWinkler}
{\sc J.~Ahn and M.~Winkler}, {\em A critical exponent for blow-up in a
  two-dimensional chemotaxis-consumption system}, Calc. Var. Partial
   Differential Equations, 62 (2023),~Paper No. 180, pp. 25.
   
\bibitem{2024-AhnKangKim}
{\sc J.~Ahn, K.~Kang and D.~Kim}, {\em Global boundedness and blow-up in a repulsive
chemotaxis-consumption system in higher dimensions}, https://arxiv.org/abs/2408.16225, (2024).
   
\bibitem{2024-AML-DongZhangZhang}
{\sc Y.~Dong, S.~Zhang, and Y.~Zhang}, {\em Blowup phenomenon for a 2{D}
  chemotaxis-consumption model with rotation and signal saturation on the
  boundary}, Appl. Math. Lett., 149 (2024),~Paper No. 108934, pp. 6.

\bibitem{1971-JOTB-KELLERSEGEL}
{\sc E.F.~Keller and L.A.~Segel}, {\em Traveling bands of chemotactic bacteria: a
  theoretical analysis}, J Theor Biol, 30 (1971), pp.~235-248.
  
\bibitem{1970-TMMO-EidelmanIvasishen}
{\sc S.~D. {\`E}idel'man and S.~D. Ivasishen}, {\em Investigation of the
  green's matrix of a homogeneous parabolic boundary value problem}, Trudy
  Moskovskogo Matematicheskogo Obshchestva, 23 (1970), pp.~179--234.

\bibitem{2017-JMP-FanJin}
{\sc L.~Fan and H.-Y. Jin}, {\em Global existence and asymptotic behavior to a
  chemotaxis system with consumption of chemoattractant in higher dimensions},
  J. Math. Phys., 58 (2017),~Paper No. 011503, pp. 22.

\bibitem{2017-DCDS-LankeitWang}
{\sc J.~Lankeit and Y.~Wang}, {\em Global existence, boundedness and
  stabilization in a high-dimensional chemotaxis system with consumption},
  Discrete Contin. Dyn. Syst., 37 (2017), pp.~6099--6121.

\bibitem{2022-N-LankeitWinkler}
{\sc J.~Lankeit and M.~Winkler}, {\em Radial solutions to a
  chemotaxis-consumption model involving prescribed signal concentrations on
  the boundary}, Nonlinearity, 35 (2022), pp.~719--749.

\bibitem{2023-SAM-LankeitWinkler}
\leavevmode\vrule height 2pt depth -1.6pt width 23pt, {\em Depleting the
  signal: analysis of chemotaxis-consumption models---a survey}, Stud. Appl.
  Math., 151 (2023), pp.~1197--1229.

\bibitem{2015-MMMAS-LiSuenWinklerXue}
{\sc T.~Li, A.~Suen, M.~Winkler, and C.~Xue}, {\em Global small-data solutions
  of a two-dimensional chemotaxis system with rotational flux terms}, Math.
  Models Methods Appl. Sci., 25 (2015), pp.~721--746.

\bibitem{2015-BVP-Li}
{\sc X.~Li}, {\em Global existence and uniform boundedness of smooth solutions
  to a parabolic-parabolic chemotaxis system with nonlinear diffusion}, Bound.
  Value Probl.,  (2015), ~2015:107, 17 pp.

\bibitem{1990-PA-MATSUSHITAFUJIKAWA}
{\sc M.~Matsushita and H.~Fujikawa}, {\em Diffusion-limited growth in bacterial
  colony formation}, Physica A, 168 (1990), pp.~498--506.
  
\bibitem{2019-QuittnerSouplet}
{\sc P.~Quittner and P.~Souplet}, {\em Superlinear parabolic problems},
  Springer, 2019.

\bibitem{2011-JMAA-Tao}
{\sc Y.~Tao}, {\em Boundedness in a chemotaxis model with oxygen consumption by
  bacteria}, J. Math. Anal. Appl., 381 (2011), pp.~521--529.

\bibitem{2012-JDE-TaoWinkler}
{\sc Y.~Tao and M.~Winkler}, {\em Boundedness in a quasilinear
  parabolic-parabolic {K}eller-{S}egel system with subcritical sensitivity}, J.
  Differential Equations, 252 (2012), pp.~692--715.

\bibitem{2012-JDE-TaoWinklera}
\leavevmode\vrule height 2pt depth -1.6pt width 23pt, {\em Eventual smoothness
  and stabilization of large-data solutions in a three-dimensional chemotaxis
  system with consumption of chemoattractant}, J. Differential Equations, 252
  (2012), pp.~2520--2543.

\bibitem{2014-JDE-TaoWinkler}
\leavevmode\vrule height 2pt depth -1.6pt width 23pt, {\em Energy-type
  estimates and global solvability in a two-dimensional chemotaxis-haptotaxis
  model with remodeling of non-diffusible attractant}, J. Differential
  Equations, 257 (2014), pp.~784--815.

\bibitem{2019-JDE-TaoWinkler}
\leavevmode\vrule height 2pt depth -1.6pt width 23pt, {\em Global smooth
  solvability of a parabolic-elliptic nutrient taxis system in domains of
  arbitrary dimension}, J. Differential Equations, 267 (2019), pp.~388--406.
  

\bibitem{2005-PTNASTUSA-TuvalCisnerosDombrowskiWolgemuthKesslerGoldstein}
{\sc I.~Tuval, L.~Cisneros, C.~Dombrowski, C.~Wolgemuth, J.~Kessler, and
  R.~Goldstein}, {\em Bacterial swimming and oxygen transport near contact
  lines}, Proc. Natl. Acad. Sci. USA, 102 (2005), pp.~2277--2282.
  
\bibitem{2019-JFA-Winkler}
{\sc M.~Winkler}, {\em A three-dimensional {K}eller-{S}egel-{N}avier-{S}tokes
  system with logistic source: global weak solutions and asymptotic
  stabilization}, J. Funct. Anal., 276 (2019), pp.~1339--1401.
  
\bibitem{2018-N-Winkler}
{\sc M.~Winkler}, {\em A critical blow-up exponent in a chemotaxis system with
  nonlinear signal production}, Nonlinearity, 31 (2018), pp.~2031--2056.

\bibitem{2019-EJDE-WangLi}
{\sc H.~Wang and Y.~Li}, {\em Renormalized solutions to a chemotaxis system
  with consumption of chemoattractant}, Electron. J. Differential Equations,
  (2019),~Paper No. 38, pp. 19.

\bibitem{2013-EJDE-WangKhanKhan}
{\sc L.~Wang, S.~U.-D. Khan, and S.~U.-D. Khan}, {\em Boundedness in a
  chemotaxis system with consumption of chemoattractant and logistic source},
  Electron. J. Differential Equations,  (2013),~paper No. 209, pp. 9.

\bibitem{2016-ZAMP-WangMuHu}
{\sc L.~Wang, C.~Mu, and X.~Hu}, {\em Global solutions to a chemotaxis model
  with consumption of chemoattractant}, Z. Angew. Math. Phys., 67 (2016),
  ~Art. 96, pp. 16.

\bibitem{2018-AA-WangMuHuZheng}
{\sc L.~Wang, C.~Mu, X.~Hu, and P.~Zheng}, {\em Boundedness in a quasilinear
  chemotaxis model with consumption of chemoattractant and logistic source},
  Appl. Anal., 97 (2018), pp.~756--774.

\bibitem{2015-ZAMP-WangMuLinZhao}
{\sc L.~Wang, C.~Mu, K.~Lin, and J.~Zhao}, {\em Global existence to a
  higher-dimensional quasilinear chemotaxis system with consumption of
  chemoattractant}, Z. Angew. Math. Phys., 66 (2015), pp.~1633--1648.

\bibitem{2014-ZAMP-WangMuZhou}
{\sc L.~Wang, C.~Mu, and S.~Zhou}, {\em Boundedness in a parabolic-parabolic
  chemotaxis system with nonlinear diffusion}, Z. Angew. Math. Phys., 65
  (2014), pp.~1137--1152.

\bibitem{2023-PRSESA-WangWinkler}
{\sc Y.~Wang and M.~Winkler}, {\em Finite-time blow-up in a repulsive
  chemotaxis-consumption system}, Proc. Roy. Soc. Edinburgh Sect. A, 153
  (2023), pp.~1150--1166.

\bibitem{2015-ZAMP-WangXiang}
{\sc Y.~Wang and Z.~Xiang}, {\em Global existence and boundedness in a
  higher-dimensional quasilinear chemotaxis system}, Z. Angew. Math. Phys., 66
  (2015), pp.~3159--3179.
  
\bibitem{2024-NARWA-YangAhn}
{\sc S.-O. Yang and J.~Ahn}, {\em Long time asymptotics of small mass solutions
  for a chemotaxis-consumption system involving prescribed signal
  concentrations on the boundary}, Nonlinear Anal. Real World Appl., 79 (2024),
  ~Paper No. 104129, 16 pp.

\bibitem{2015-JMP-ZhangLi}
{\sc Q.~Zhang and Y.~Li}, {\em Stabilization and convergence rate in a
  chemotaxis system with consumption of chemoattractant}, J. Math. Phys., 56
  (2015),~Paper No. 081506, pp. 10.

\end{thebibliography}
\end{document}